\def\cgaps#1{}
\def\Cgaps#1{}
\def\undersetbrace#1\to#2{\underbrace{#2}_{#1}}                                                          
\def\oversetbrace#1\to#2{\overbrace{#2}^{#1}}
\def\AMSunderset#1\to#2{\underset{#1}{#2}}
\def\AMSoverset#1\to#2{\overset{#1}{#2}}
\newtheorem*{prop*}{Proposition}
\newtheorem*{thm*}{Theorem}
\newtheorem*{lem*}{Lemma}
\newtheorem*{cor*}{Corollary}
\newtheorem*{remark*}{Remark}
\def\ign#1{}             
\def\o{\circ\,}
\def\X{\mathfrak X}
\def\al{\alpha}
\def\be{\beta}
\def\ga{\gamma}
\def\de{\delta}
\def\ep{\varepsilon}
\def\ze{\zeta}
\def\et{\eta}
\def\th{\theta}
\def\ka{\kappa}
\def\rh{\rho}
\def\ph{\varphi}
\def\ps{\psi}
\def\om{\omega}
\def\Ga{\Gamma}
\def\Om{\Omega}
\def\i{^{-1}}
\def\x{\times}
\def\p{\partial}
\let\on=\operatorname
\def\L{\mathcal L}
\def\AMSonly#1{}
\def\R{\mathbb R}
\def\Diff{{\on{Diff}}}
\def\Fl{{\on{Fl}}}
\def\A{{\mathcal{A}}}
\def\M{{\mathcal{M}}}
\def\AA{{\mathcal{A}_1}}
\let\wt=\widetilde
\title[The Sobolev $H^1$-metric on $\Diff(\mathbb R)$] 
{The homogeneous Sobolev  metric of order one on diffeomorphism groups on  the real line} 
\author{Martin Bauer, Martins Bruveris, Peter W. Michor}
\address{
Martin Bauer, Peter W.\ Michor: Fakult\"at f\"ur Mathematik,
Universit\"at Wien, 
Oskar-Morgenstern-Platz 1, A-1090 Wien, Austria.\newline\indent
Martins Bruveris: Institut de math\'ematiques, EPFL, CH-1015 Lausanne, Switzerland}
\email{bauer.martin@univie.ac.at}
\email{martins.bruveris@epfl.ch}
\email{peter.michor@univie.ac.at}
\date{{\today} } 
\thanks{MB was supported by `Fonds zur
F\"orderung der wissenschaftlichen                    
Forschung, Projekt P~24625'} 
\keywords{diffeomorphism group, geodesic equation, Sobolev $H^1$-metric, R-map}
\subjclass[2010]{Primary 35Q31, 58B20, 58D05} 
\begin{document}
\begin{abstract}
In this article we study Sobolev metrics of order one on diffeomorphism groups on the real line.
We prove that the space $\on{Diff}_{1}(\R)$ equipped with the homogeneous Sobolev metric of order 
one is a flat space in the sense of Riemannian geometry, as it is isometric to an open subset of a 
mapping space equipped with the flat $L^2$-metric. Here $\on{Diff}_{1}(\R)$ denotes the extension 
of the group of all either compactly supported, rapidly decreasing, or 
$W^{\infty,1}$-diffeomorphisms, which allows for a shift toward infinity. 
Surprisingly, on the non-extended group the Levi-Civita connection does not exist.    
In particular, this result provides an analytic solution formula for the corresponding geodesic 
equation, the non-periodic Hunter--Saxton (HS) equation.  
In addition, we show that one can obtain a similar result for the two-component HS 
equation and discuss the case of the non-homogeneous Sobolev one metric, which is related to the Camassa--Holm equation. 
\end{abstract}

\maketitle 

\section{Introduction}

In recent decades it has been shown that various prominent partial differential equations (PDEs) arise as geodesic equations on certain infinite-dimensional manifolds. This phenomenon was first observed in the groundbreaking paper \cite{Arnold1966} for the incompressible Euler equation, which is the geodesic equation on the group of volume preserving diffeomorphisms with respect to the right-invariant $L^2$-metric. It was shown that this geometric approach could be extended to a whole variety of other PDEs used in hydrodynamics:  the  Camassa-Holm equation \cite{Holm1993,Kouranbaeva1999}, the  Constantin-Lax-Majda equation \cite{Wunsch2010a,Bauer2011c_preprint}
and the Korteweg--de Vries equation \cite{Ovsienko1987,Bauer2012c} to name but a few examples.

It was later realized that the geometric interpretation could be used to prove results about the behaviour of the PDEs. The first such result was \cite{Ebin1970}, where the researchers showed the local well-posedness of Euler equations. Similar techniques were then applied to other PDEs, which arise as geodesic equations, see e.g. \cite{Constantin2007, Constantin2003, Wunsch2010, GayBalmaz2009,Bauer2011b}.

The analysis in this paper is mainly concerned with the Hunter--Saxton (HS) equation.
For the periodic case it was shown in \cite{Misiolek2003} that the HS equation is the geodesic equation on 
the homogeneous space $\Diff(S^1)/S^1$ with respect to the homogeneous Sobolev $\dot H^1$-metric of order one.
Lenells used this geometric interpretation in \cite{Lenells2007,Lenells2008} to construct an analytic solution formula for the equation. 
In fact, he showed that the Riemannian manifold $\big(\Diff(S^1)/S^1,\dot{H}^1\big)$ was isometric to an open subset of an $L^2$-sphere in the space $C^\infty(S^1, \R)$ of periodic functions and thereby obtained an explicit formula for the corresponding geodesics on 
$\Diff(S^1)/S^1$. 

The aim of this article is twofold. First, we extend the results of \cite{Lenells2007} to groups of 
real-analytic and ultra-differentiable diffeomorphisms and show that the solutions of the 
HS equation are analytic in time and space. Second we consider the $\dot H^1$-metric and the 
HS equation on the real line. Our main result can be paraphrased as follows (see 
Sect.~\ref{section4}).

\begin{thm*}
The non-periodic HS equation is the geodesic equation on the space $\Diff_{\AA}(\R)$ with respect 
to the right-invariant $\dot H^1$-metric. Furthermore, the space 
$\big(\Diff_{\AA}(\R),\dot{H}^1\big)$ is isometric to an open subset in $\big(\A(\R),L^2\big)$ and is thus a flat space in the sense of Riemannian geometry.
\end{thm*}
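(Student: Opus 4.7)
The natural approach, following Lenells' treatment of the periodic case, is to construct an explicit isometry (an \emph{R-map}) from $\Diff_{\AA}(\R)$ onto an open subset of the flat Hilbert space $(\A(\R), L^2)$. Since geodesics in a flat space are straight lines, this simultaneously establishes flatness and yields an explicit solution formula for the geodesic equation, which one then identifies with the non-periodic HS equation. Concretely, I would take
\[
 R(\varphi)(x) := 2\bigl(\sqrt{\varphi'(x)} - 1\bigr),
\]
where the shift by $-1$ is essential so that $R(\varphi)$ decays at $\pm\infty$ for a diffeomorphism whose derivative tends to $1$ at infinity, placing the image in $\A(\R)$.

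The first technical step is to verify the isometry property. Differentiating gives
\[
 dR(\varphi).h = \frac{h'}{\sqrt{\varphi'}},
\]
so for $h,k \in T_\varphi\Diff_{\AA}(\R)$,
\[
 \langle dR(\varphi).h,\, dR(\varphi).k \rangle_{L^2} = \int_{\R} \frac{h'(x)\,k'(x)}{\varphi'(x)}\,dx.
\]
A change of variables $y = \varphi(x)$ shows that this equals $\int_\R (h\circ\varphi\i)'(y)(k\circ\varphi\i)'(y)\,dy$, which is precisely the right-invariant $\dot H^1$-metric on $\Diff_{\AA}(\R)$ evaluated at $\varphi$.

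Next I would show that $R$ is a bijection onto an open subset of $\A(\R)$. The pointwise relation $\varphi'(x) = \bigl(1 + \tfrac12 R(\varphi)(x)\bigr)^2$ inverts to
\[
 \varphi(x) = x + \int_{-\infty}^{x} \Bigl( f(y) + \tfrac14 f(y)^2 \Bigr)\, dy,
 \qquad f = R(\varphi),
\]
which defines an element of $\Diff_{\AA}(\R)$ whenever $f \in \A(\R)$ satisfies the pointwise condition $f(x) > -2$ and the integrability condition making the improper integral finite. Both conditions cut out an open subset of $\A(\R)$, giving the required flat model. Pulling back the straight-line geodesic $t \mapsto R(\varphi_0) + t\, dR(\varphi_0).u_0$ then produces the closed-form solution for the geodesic initial value problem on $\Diff_{\AA}(\R)$, and a direct computation verifying that this curve satisfies the Euler--Arnold equation for the $\dot H^1$-metric identifies it with the non-periodic HS equation.

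The main obstacle, and the reason for passing from the non-extended diffeomorphism groups to $\Diff_{\AA}(\R)$, is the asymptotic behavior. For $\varphi$ in the non-extended group, the quantity $\varphi(x) - x$ has matching limits $0$ at $\pm\infty$, forcing $\int_\R (f + \tfrac14 f^2)\,dy = 0$ on the image; this is a codimension-one constraint that is \emph{not} preserved by generic straight lines in $(\A(\R), L^2)$, which is precisely the analytic manifestation of the failure of the Levi-Civita connection alluded to in the introduction. The extension $\Diff_{\AA}(\R)$ is designed exactly to absorb the resulting shift at infinity, so that straight lines in the flat model correspond to actual curves in the diffeomorphism group. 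Handling this extension carefully, and verifying that the openness and inversion arguments go through in whichever regularity class (compactly supported, Schwartz, or $W^{\infty,1}$) one works in, is the principal analytic content of the proof.
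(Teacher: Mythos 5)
Your proposal is correct and follows essentially the same route as the paper: the same map $R(\ph)=2\big(\sqrt{\ph'}-1\big)$ with the same inverse $\ga\mapsto x+\tfrac14\int_{-\infty}^x(\ga^2+4\ga)\,dy$, the same tangent-map computation showing $R^*\langle\cdot,\cdot\rangle_{L^2}=\langle\cdot,\cdot\rangle_{\dot H^1}$, and the same identification of the image as the open set $\{f\in\A(\R):f>-2\}$ (your extra ``integrability condition'' is vacuous, since $f+\tfrac14 f^2\in\A(\R)\subset L^1(\R)$ for all three classes, so only the pointwise condition remains). The one divergence is that the paper first derives the HS equation via the Euler--Arnold formula by computing $\on{ad}(X)^*\check G_{\on{Id}}(X)$ --- which is also how it exhibits the failure of the geodesic equation on the non-extended group $\Diff_{\A}(\R)$ --- whereas you read it off from straight lines in the flat model; both arguments appear in the paper and either suffices, though note that for the Schwartz and $W^{\infty,1}$ classes the membership $R(\ph)\in\A(\R)$ needs the Taylor-remainder plus $\mathcal B(\R)$-module argument rather than mere decay of $\ph'-1$.
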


Here $\A(\R)$ denotes one of the function spaces $C^{\infty}_c(\R)$, 
$\mathcal S(\R)$ or $W^{\infty,1}(\R)$ and $\Diff_{\AA}(\R)$ is an extension over the diffeomorphism group including 
shifts near $+\infty$ (Sect.~\ref{diffgroups}). 

The first surprising fact is that the normal subgroups $\Diff_\A(\mathbb R)$ do not admit the geodesic 
equation (or the Levi-Civita covariant derivative) for this right-invariant metric. 
The extended group $\Diff_\AA(\mathbb R)$ admits it but in a weaker sense than
realized in \cite{Arnold1966} and follow-up papers (Sect. \ref{adjoint}). We also sketch Arnold's 
curvature formula in this weaker setting. 

The second surprising fact is that $\on{Diff}_\AA(\R)$ with the $\dot H^1$-metric is a 
flat Riemannian manifold, as opposed to $\on{Diff}(S^1)$, which is a positively curved one.  

The main ingredient for the proof of this result is the $R$-map, which allows us to isometrically embed $\Diff_\AA(\mathbb R)$ 
with the right-invariant $\dot H^1$-metric as an open subset of a (flat) 
 pre-Hilbert space. This phenomenon has also been observed for the space $\on{Imm}(S^1,\R^2)/\R^2$ 
 of plane curves modulo translations (see. e.g., 
 \cite{Kurtek2010,Michor2008a,Bauer2012b_preprint}),
and on the semi-direct product space $\Diff(S^1)\ltimes C^\infty(S^1,\R)$
(the corresponding geodesic equation is the periodic two-component HS equation;
see \cite{Lenells2011_preprint}).

In the periodic case we extend the results to groups of real-analytic 
diffeomorphisms and ultra-differentiable diffeomorphisms of certain types and show that the HS equation has solutions which are real-analytic or ultra-differentiable if the initial diffeomorphism is.

In Sect. \ref{2hs} we apply the same techniques to treat the two-component HS equation on the real line. We discuss the existence of the geodesic equation and construct an isometry between the configuration space $\Diff_\A(\R)\ltimes \A(\R)$ and an open subset of a pre-Hilbert space. 

Finally, we generalize the constructions to the case of the right-invariant non-homogeneous $H^1$-metric on $\Diff(S^1)$, 
whose geodesic equation is the dispersion-free Camassa--Holm equation. In this case we define an $R$-map, whose image is a subspace of a pre-Hilbert space, no longer open.

\subsection*{Acknowledgments}
We thank Yury  Neretin for helpful discussions and comments regarding  the space $H^{\infty}(\R)$.

\section{Some Diffeomorphism Groups on the Real Line and the Circle}\label{diffgroups}

The group of all orientation-preserving diffeomorphisms $\Diff(\mathbb R)$ is not an open subset of 
$C^\infty(\mathbb R,\mathbb R)$ endowed with the compact $C^\infty$-topology and so 
it is not a smooth manifold with charts in the usual sense. One option is to consider it as a Lie group in the 
cartesian closed category of  Fr\"olicher spaces (see \cite[Sect. 23]{Michor1997}) with the structure 
induced by the injection 
$f\mapsto (f,f\i)\in C^\infty(\mathbb R,\mathbb R)\x C^\infty(\mathbb R,\mathbb R)$.
Alternatively, one can use the theory of smooth manifolds based on smooth curves instead of charts from 
\cite{Michor1984a}, \cite{Michor1984b}, which agrees with the usual theory up to Banach manifolds. In this paper we will restrict our attention to subgroups of the whole diffeomorphism group, which are smooth Fr\'echet manifolds.

Let us first briefly recall the definition of a regular Lie group in the sense of \cite{Kriegl1997}; see also \cite[Sect. 38.4]{Michor1997}. 
A  smooth Lie group $G$ with Lie algebra $\mathfrak g=T_eG$ 
is called regular if the following conditions hold:
\begin{itemize}
\item 
For each smooth curve 
$X\in C^{\infty}(\mathbb R,\mathfrak g)$ there exists a smooth curve 
$g\in C^{\infty}(\mathbb R,G)$ whose right logarithmic derivative is $X$, i.e.
\begin{equation}
\label{eq:regular}
\begin{cases} g(0) &= e \\
\p_t g(t) &= T_e(\mu^{g(t)})X(t) = X(t).g(t).
\end{cases} 
\end{equation}
The curve $g$, if it exists, is uniquely determined by its initial value $g(0)$.
\item
The map $\on{evol}^r_G(X)=g(1)$ where $g$ is the unique solution of \eqref{eq:regular}, considered as a map $\on{evol}^r_G: C^{\infty}(\mathbb R,\mathfrak g)\to G$ is $C^\infty$-smooth. 
\end{itemize}

\subsection{The Group $\Diff_{\mathcal B}(\mathbb R)$}
The `largest' regular Lie group in $\Diff(\mathbb R)$ with charts is the group of all diffeomorphisms 
$\ph = \on{Id}_{\mathbb R} + f$ with $f\in \mathcal B(\mathbb R)$ such that $f'>-1$.
\noindent
$\mathcal B(\mathbb R)$ is the space of functions which have all derivatives (separately) bounded.
It is a reflexive nuclear Fr\'echet space. 
\newline
{\it The space $C^\infty(\mathbb R,\mathcal{B}(\mathbb R))$ of smooth
curves $t\mapsto f(t,\cdot)$ in $\mathcal{B}(\mathbb R)$ consists of all functions 
$f\in C^\infty(\mathbb R^{2},\mathbb R)$ satisfying the following 
property:
\begin{enumerate}
\item[$\bullet$]
For all $k\in \mathbb N_{\ge0}$ and $n\in \mathbb N_{\ge0}$ the expression
$\p_t^{k}\p^n_xf(t,x)$  is uniformly bounded in $x\in \mathbb R$, locally
in $t$. 
\end{enumerate} 
}

We can specify other regular Lie groups by 
requiring that $g$ lies in certain spaces of smooth functions. 
Now we will discuss these spaces, describe the smooth curves in them, and describe the 
corresponding groups, specializing the results from \cite{Michor2012b_preprint}, where most of these groups are 
treated on $\mathbb R^n$ in full detail.

\subsection{Groups Related to  $\Diff_{c}(\mathbb R)$}\label{diffc} 
The reflexive nuclear (LF) space 
$C^\infty_c(\mathbb R)$ of smooth functions with compact support leads to the well-known regular Lie 
group $\Diff_{c}(\mathbb R)$, see \cite[Sect.~43.1]{Michor1997}.

We will now define an extension of this group which will play a major role in subsequent parts of this article.

\noindent
Define $C^\infty_{c,2}(\mathbb R)=\{f: f'\in C^\infty_c(\mathbb R)\}$ as the space of antiderivatives 
of smooth functions with compact support. It is a reflexive nuclear (LF) space.
We also define the space $C^\infty_{c,1}(\R) = \left\{ f\in C^\infty_{c,2}(\R) \;:\; f(-\infty)=0\right\}$ of antiderivatives of the form $x \mapsto \int_{-\infty}^x g\;dy$ with $g \in C^\infty_{c}(\R)$.
\newline
 {\it    $\Diff_{c,2}(\mathbb R)=\bigl\{\ph=\on{Id}+f: f\in C^\infty_{c,2}(\mathbb R), 
 f'>-1\bigr\}$ is the corresponding group.}  \newline
Define the two functionals $\on{Shift}_\ell, \on{Shift}_r : \on{Diff}_{c,2}(\R) \to \R$ 
by $$\on{Shift}_\ell(\ph) = \on{ev}_{-\infty}(f)=\lim_{x\to-\infty} f(x),\quad\on{Shift}_r(\ph) = \on{ev}_{\infty}(f)=\lim_{x\to\infty} f(x)$$ 
for $\ph(x) = x + f(x)$.
Then the short exact sequence of smooth homomorphisms of Lie groups 
$$\xymatrix{
\Diff_{c}(\mathbb R)\quad \ar@{>->}[r] & 
\Diff_{c,2}(\mathbb R) 
\ar@{->>}[rrr]^{(\on{Shift}_{\ell},\on{Shift}_r)} &&& (\mathbb R^2,+)
}
$$
describes a semi-direct product, where a smooth homomorphic section $s: \mathbb R^2\to \Diff_{c,2}(\mathbb R)$ is 
given by the composition of flows $s(a,b)= \Fl^{X_{\ell}}_a\o \Fl^{X_r}_b$ for the vector fields 
$X_{\ell} = f_{\ell}\p_x$, $X_r = f_r\p_x$ with $[X_{\ell},X_r]=0$ where $f_{\ell},f_r\in C^\infty(\mathbb R,[0,1])$ satisfy
\begin{equation}\label{eq:fell}
f_{\ell}(x) = \begin{cases} 1 &\text{  for } x\le -1 \\ 
                            0 &\text{  for } x\ge 0, \end{cases}
\qquad
f_{r}(x) = \begin{cases} 0 &\text{  for } x\le 0 \\ 
                         1 &\text{  for } x\ge 1. \end{cases}
\end{equation}                         

The normal  subgroup $\Diff_{c,1}(\mathbb R)= \ker(\on{Shift}_{\ell})=\{\ph=\on{Id}+f: f\in C^\infty_{c,1}(\R),f'>-1\}$ of diffeomorphisms 
which have no shift at $-\infty$ will play an important role subsequently.

\subsection{Groups Related to  $\Diff_{\mathcal S}(\mathbb R)$}\label{diffs} 
The regular Lie group
$\Diff_{\mathcal S}(\mathbb R)$ was treated in \cite[Sect.~6.4]{Michor2006a}. Let us summarize the most important results:
the space 
$\mathcal S(\R)$ consisting of all rapidly decreasing functions is a reflexive nuclear Fr\'echet space.
\newline
{\it The space $C^\infty(\mathbb R,\mathcal S(\mathbb R))$ of smooth
curves in $\mathcal S(\mathbb R)$ consists of all functions 
$f\in C^\infty(\mathbb R^2,\mathbb R)$ satisfying the following 
property:
\begin{enumerate}
\item[$\bullet$]
For all $k,m\in \mathbb N_{\ge0}$ and $n\in \mathbb N_{\ge0}$,  the expression
$(1+|x|^2)^m\p_t^{k}\p^n_xf(t,x)$ is uniformly bounded in $x\in \mathbb R$, locally uniformly bounded  
in $t\in \mathbb R$.
\end{enumerate} 
}
{\it $\Diff_{\mathcal S}(\mathbb R)=\bigl\{\ph=\on{Id}+f: f\in \mathcal S(\mathbb R), 
		f'>-1\bigr\}$ is the corresponding regular Lie group.} 

We again define an extended space:

\noindent
$\mathcal S_2(R)=\{f\in C^\infty(\mathbb R): f'\in \mathcal S(\mathbb R)\}$, 
     the space of antiderivatives of functions in $\mathcal S(\mathbb R)$.
		 It is isomorphic to $\mathbb R\x \mathcal S(\mathbb R)$
     via   $f\mapsto (f(0), f')$. It is again a reflexive nuclear Fr\'echet space,
     contained in $\mathcal B(\mathbb R)$. \newline
{\it The space $C^\infty(\mathbb R,\mathcal S_2(\mathbb R))$ of smooth
curves in $\mathcal S_2(\mathbb R)$ consists of all functions 
$f\in C^\infty(\mathbb R^2,\mathbb R)$ satisfying the following 
property:
\begin{enumerate}
\item[$\bullet$]
For all $k,m\in \mathbb N_{\ge0}$, and $n\in \mathbb N_{>0}$,  the expression
$(1+|x|^2)^m\p_t^{k}\p^n_xf(t,x)$ is uniformly bounded in $x$ and locally uniformly bounded  
in $t$.
\end{enumerate} 
}
We also define the space $\mathcal S_1(\R) = \left\{ f\in\mathcal S_2(\R) \;:\; f(-\infty)=0\right\}$ of antiderivatives of the form $x \mapsto \int_{-\infty}^x g\;dy$ with $g \in \mathcal S(\R)$.

{\it $\Diff_{\mathcal S_2}(\mathbb R)=\bigl\{\ph=\on{Id}+f: f\in \mathcal S_2(\mathbb R), 
		f'>-1\bigr\}$ is the corresponding regular Lie group}.
We have again the short exact sequence of smooth homomorphisms of Lie groups 
$$\xymatrix{
\Diff_{\mathcal S}(\mathbb R)\quad \ar@{>->}[r] & 
\Diff_{\mathcal S_2}(\mathbb R) 
\ar@{->>}[rrr]^{(\on{Shift}_{\ell},\on{Shift}_r)} &&& (\mathbb R^2,+)
},
$$
which splits via the same smooth homomorphic section $s: 
\mathbb R^2\to \Diff_{\mathcal S_2}(\mathbb R)$ as in \ref{diffc} and, thus, describes a 
semi-direct product.
The normal Lie subgroup $\Diff_{\mathcal S_1}(\mathbb R)= \ker(\on{Shift}_{\ell})$ of diffeomorphisms 
which have no shift at $-\infty$ will also play an important role later on.

\subsection{Groups Related to  $\Diff_{W^{\infty,1}}(\mathbb R)$}\label{diffw} The space 
$W^{\infty,1}(\mathbb R)=\bigcap_{k\ge 0}W^{k,1}(\mathbb R) 
= \{f\in C^\infty(\mathbb R): f^{(k)}\in L^1(\mathbb R) \text{  for }k=0,1,2,\dots\}$ 
is the intersection of all Besov spaces of type $L^1$.
It is a reflexive Fr\'echet space. 
By the Sobolev inequality  we have $W^{\infty,1}(\mathbb R)\subset \mathcal B(\mathbb R)$; thus also 
$W^{\infty,1}(\mathbb R)\subset W^{\infty,p}(\mathbb R) = \{f\in C^\infty(\mathbb R): 
f^{(k)}\in L^p(\mathbb R)\text{  for }p=0,1,2,\dots\}$. 
For any $f\in W^{\infty,1}(\mathbb R)$ each 
derivative $f^{(k)}$ is smooth and converges to 0 for $x\to \pm \infty$ since it is in 
$\mathcal B(\mathbb R)$ and
since $C^\infty_c(\mathbb R)$ is dense in $W^{\infty,1}(\mathbb R)$.

{\it The space $C^\infty(\mathbb R,W^{\infty,1}(\mathbb R))$ of smooth
curves $t\mapsto f(t,\cdot)$ in $W^{\infty,1}(\mathbb R)$ consists of all functions 
$f\in C^\infty(\mathbb R^{2},\mathbb R)$ satisfying the following 
property:
\begin{enumerate}
\item[$\bullet$]
For all $k\in \mathbb N_{\ge0}$ and $n\in \mathbb N_{\ge0}$ the expression
$\|\p_t^{k}\p^n_xf(t,\quad)\|_{L^1(\mathbb R)}$ is locally bounded  
in $t$. 
\end{enumerate} 
}
\noindent
{\it $\Diff_{W^{\infty,1}}(\mathbb R)=\bigl\{\ph=\on{Id}+f: f\in W^{\infty,1}(\mathbb R), 
		f'>-1\bigr\}$ 
		denotes the corresponding regular Lie group}.

We again consider an extended space:

\noindent
$W^{\infty,1}_2(\mathbb R)=\{f\in C^\infty(\mathbb R): f'\in W^{\infty,1}(\mathbb R)\}$ 
is the space of bounded antiderivatives of functions in $W^{\infty,1}(\mathbb R)$.
It is isomorphic to $\mathbb R\x W^{\infty,1}(\mathbb R)$ via $f\mapsto (f(0), f')$.
{\it The space $C^\infty(\mathbb R,W^{\infty,1}_2(\mathbb R))$ of smooth
curves in $W^{\infty,1}_2(\mathbb R)$ consists of all functions 
$f\in C^\infty(\mathbb R^2,\mathbb R)$ satisfying the following 
property:
\begin{enumerate}
\item[$\bullet$]
For all $k\in \mathbb N_{\ge0}$, $n\in \mathbb N_{>0}$ and  $t\in \mathbb R$ the expression
$\|\p_t^{k}\p^n_xf(t,\quad)\|_{L^1(\mathbb R)}$ is 
locally bounded in $t$.
\end{enumerate} 
}
We also define the space $W^{\infty,1}_1(\mathbb R)=\{ f\in W^{\infty,1}_2(\mathbb R): 
f(-\infty)=0\}$ of antiderivatives of the form $x\mapsto \int_{-\infty}^x g\,dy$ for 
$g\in W^{\infty,1}(\mathbb R)$.

{\it $\Diff_{W^{\infty,1}_2}(\mathbb R)=\bigl\{\ph=\on{Id}+f: f\in W^{\infty,1}(\mathbb R), 
		f'>-1\bigr\}$ 
		denotes the corresponding regular Lie group}. 

We have again the following exact sequence of smooth homomorphisms of regular Lie groups:
\begin{equation*}
\xymatrix{
\Diff_{W^{\infty,1}}(\mathbb R)\quad \ar@{>->}[r] & 
\Diff_{W^{\infty,1}_2}(\mathbb R) 
\ar@{->>}[rrr]^{(\on{Shift}_{\ell},\on{Shift}_r)} &&& (\mathbb R^2,+)
},
\end{equation*}
which splits with the same section as for 
$\Diff_{c,2}(\mathbb R)$.
The group $\Diff_{W^{\infty,1}_1}(\mathbb R)= \ker(\on{Shift}_{\ell})$ of diffeomorphisms, which 
have no shift at $-\infty$,  will play an important role later on. 

\subsection*{Remark on the $H^\infty=W^{\infty,2}$-case} 
One may wonder why we use the groups related to $\Diff_{W^{\infty,1}}(\mathbb R)$ instead of those 
modelled on the more usual intersection $H^\infty$ of all Sobolev spaces. The reason is that 
$H^\infty\not\subset L^1$; thus, for $f\in H^\infty(\mathbb R)$ 
the antiderivative $x\mapsto \int_{-\infty}^x f(y)\,dy$ is not bounded in general, and  
the extended groups are not contained in 
$\Diff_{\mathcal B}(\mathbb R)$ and thus do not admit charts. 
If we model the groups on 
$H^\infty\cap L^1$, then they are {\bf not} Lie groups: right translations are smooth, but left
translations are not. See 
\cite[Sect.~14]{KMR14} for this surprising fact.

\begin{thm*}\label{regularLie} 
All the groups $\Diff_c(\mathbb R)$, 
$\Diff_{c,1}(\mathbb R)$, 
$\Diff_{c,2}(\mathbb R)$, 
$\Diff_{\mathcal S}(\mathbb R)$, 
$\Diff_{\mathcal S_1}(\mathbb R)$, 
$\Diff_{\mathcal S_2}(\mathbb R)$, 
$\Diff_{W^{\infty,1}}(\mathbb R)$, 
$\Diff_{W^{\infty,1}_1}(\mathbb R)$, 
$\Diff_{W^{\infty,1}_2}(\mathbb R)$, 
and 
$\Diff_{\mathcal B}(\mathbb R)$
are  smooth regular Lie groups. 
We have the following smooth injective group homomorphisms:
$$\xymatrix{
\Diff_c(\mathbb R) \ar[r] \ar[d] & \Diff_{\mathcal S}(\mathbb R) \ar[d] \ar[r] & 
\Diff_{W^{\infty,1}}(\mathbb R) \ar[d] 
\\
\Diff_{c,1}(\mathbb R) \ar[r] \ar[d] & \Diff_{\mathcal S_1}(\mathbb R) \ar[r] \ar[d] &  
\Diff_{W^{\infty,1}_1}(\mathbb R) \ar[d]
\\
\Diff_{c,2}(\mathbb R) \ar[r] & \Diff_{\mathcal S_2}(\mathbb R) \ar[r] &  
\Diff_{W^{\infty,1}_2}(\mathbb R) \ar[r] & \Diff_{\mathcal B}(\mathbb R)
}$$
Each group is a normal subgroup in any other in which it is contained, in particular in 
$\on{Diff}_{\mathcal B}(\mathbb R)$.
\end{thm*}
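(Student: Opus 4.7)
The approach is to build on the known fact that $\Diff_{\mathcal B}(\mathbb R)$ is a smooth regular Fr\'echet--Lie group, established in \cite{Michor2012b_preprint} on $\mathbb R^n$ and specializing to $\mathbb R$, and to exhibit all the other listed groups as smooth Lie subgroups through a single global chart $f \mapsto \Id + f$ defined on the open set $\{f : f' > -1\}$ of the respective modeling space. Openness is automatic, because each space $\mathcal A$ in the list embeds continuously into $\mathcal B(\mathbb R)$, so the condition $\inf_x f'(x) > -1$ defines an open subset of $\mathcal A$.

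The central step is to check that each of the classes $C^\infty_c$, $\mathcal S$, $W^{\infty,1}$ and their antiderivative extensions with subscripts $1,2$ is closed under the group operations. Composition reduces via
\[
(\Id+f)\o(\Id+g) = \Id + g + f\o(\Id + g)
\]
to the stability of each space under precomposition with $\Id + g$, which is a Fa\`a~di~Bruno calculation controlled by the curve descriptions of $C^\infty(\mathbb R,\mathcal A)$ collected in \S\ref{diffc}--\S\ref{diffw}. Inversion reduces via the identity $\ph^{-1} - \Id = -(\ph - \Id)\o\ph^{-1}$ to the same input combined with existence of $\ph^{-1}$ in $\Diff_{\mathcal B}(\mathbb R)$. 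Smoothness of composition and inversion is then obtained in the convenient setting by verifying that smooth curves are mapped to smooth curves, which is exactly what the spatial descriptions in the excerpt are designed to encode. Regularity in the sense of Kriegl--Michor amounts to solving $\p_t\ph(t,x) = X(t,\ph(t,x))$ for $X \in C^\infty(\mathbb R,\mathfrak X_{\mathcal A})$: the solution exists and depends smoothly on $X$ in $\Diff_{\mathcal B}(\mathbb R)$, and a Gr\"onwall bootstrap applied to the equations satisfied by $\p_x^k(\ph - \Id)$ shows that the solution stays in $\Diff_{\mathcal A}(\mathbb R)$.

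The inclusions in the diagram are induced by continuous linear inclusions of the defining function spaces and are automatically smooth group homomorphisms. Normality of a smaller $\Diff_{\mathcal A}$ inside a containing $\Diff_{\mathcal A'}$ follows from the conjugation formula
\[
\ph\o\ps\o\ph^{-1} - \Id = g\o\ph^{-1} + f\o(\ph^{-1} + g\o\ph^{-1}) - f\o\ph^{-1}
\]
with $\ph = \Id + f \in \Diff_{\mathcal A'}$ and $\ps = \Id + g \in \Diff_{\mathcal A}$: the first summand lies in $\mathcal A$ because composition with any $\Diff_{\mathcal B}$-diffeomorphism (a bounded perturbation of $\Id$) preserves each decay class, and the remaining difference lies in $\mathcal A$ via the Taylor identity $f(y + v(y)) - f(y) = v(y)\int_0^1 f'(y + sv(y))\,ds$, which transfers the decay of $v = g\o\ph^{-1}\in\mathcal A$ to the whole expression. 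The main technical obstacle is the regularity step: the Gr\"onwall-type estimates must be carried out at every order of derivative in a form adapted separately to the three decay classes $C^\infty_c$, $\mathcal S$, $W^{\infty,1}$, and uniformly enough to match the convenient-setting notion of smoothness used in the other steps.
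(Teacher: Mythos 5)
Your plan is correct, but it is doing substantially more work than the paper, which disposes of the entire theorem by citation: regularity of $\Diff_c(\mathbb R)$, $\Diff_{\mathcal S}(\mathbb R)$, $\Diff_{\mathcal B}(\mathbb R)$ and the $W^{\infty,1}$-case are quoted from \cite{Michor2012b_preprint}, \cite[Sect.~43.1]{Michor1997} and \cite{KMR14}, the extended groups are handled by observing that $\Diff_{\mathcal A_2}(\mathbb R)$ is a semi-direct product $\Diff_{\mathcal A}(\mathbb R)\rtimes(\mathbb R^2,+)$ via the split exact sequence with the $(\on{Shift}_\ell,\on{Shift}_r)$ homomorphism, so that regularity follows from \cite[Sect.~38.9]{Michor1997}, and normality is again quoted from \cite{Michor2012b_preprint}. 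What you propose is essentially a self-contained reconstruction of those cited proofs: the global chart $f\mapsto\Id+f$ on $\{f:f'>-1\}$, stability under composition and inversion via $\ph\i-\Id=-(\ph-\Id)\o\ph\i$ and Fa\`a di Bruno, convenient-calculus smoothness through the curve descriptions, and a Gr\"onwall bootstrap for regularity — this is exactly the strategy of the zoo papers, and your conjugation identity together with the Taylor-remainder argument correctly yields normality (it also shows $\on{Shift}_\ell$ is preserved under conjugation, which is what makes the subscript-$1$ groups normal in the subscript-$2$ ones). The one genuine structural difference is your treatment of the extended groups: you run the chart/Gr\"onwall machinery on $\mathcal A_1$ and $\mathcal A_2$ directly, whereas the paper gets their Lie group structure and regularity for free from the splitting of the short exact sequence once the kernel $\Diff_{\mathcal A}(\mathbb R)$ is known to be regular; the paper's route is cheaper, yours avoids invoking the general theorem on semi-direct products of regular Lie groups. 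Be aware that the regularity step you flag as the main obstacle is real work (it is where the restriction to $W^{\infty,1}$ rather than $H^\infty\cap L^1$ matters, cf.\ the remark preceding the theorem), and that for the group law to close one needs the open condition $\inf_x f'(x)>-1$ rather than the pointwise $f'>-1$ — these coincide for all the decay classes here since $f'$ vanishes at $\pm\infty$, as you implicitly use.
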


\begin{proof}
That the groups  $\Diff_{c}(\mathbb R)$, $\Diff_{\mathcal S}(\mathbb R)$, 
and $\Diff_{\mathcal B}(\mathbb R)$ are 
regular Lie groups is proved (for $\mathbb R^n$ instead of $\mathbb R$) in 
\cite{Michor2012b_preprint}, and in \cite[Sect.~43.1]{Michor1997} for 
$\Diff_{c}(\mathbb R)$. Moreover, the group $\Diff_{H^\infty}(\mathbb R^n)$ (where $H^\infty= 
W^{\infty,2}$) is treated in \cite{Michor2012b_preprint}; the proof for $W^{\infty,1}$ is the same. 
See also \cite[Theorem~8.1]{KMR14}.
The extension to the semi-direct products is easy and is proved in 
\cite[Sect.~38.9]{Michor1997}. That each group is normal in the largest one is also proved in 
\cite{Michor2012b_preprint}.
\end{proof}

\subsection{A Remark on the Existence of Normal Subgroups}
This section will not be used in the remainder of the paper. 
It is a well-known result that $\Diff_c(\mathbb R)$ is a simple group (see \cite{Mather1974}, \cite{Mather1975}, 
\cite{Mather1985}). We want to discuss some effects of this result for the diffeomorphism groups introduced in the previous sections.

\emph{Existence of normal subgroups in $\Diff_{c,2}(\mathbb R)$:} We first claim that 
any non-trivial normal subgroup $N$ of $\Diff_{c,2}(\mathbb R)$ intersects $\Diff_c(\mathbb R)$ 
non-trivially: let $\on{Id}\ne \ph\in N$. If $\ph$ has compact support, we are done.
If $\ph$ does not have compact support, without loss of generality assume that 
$\on{Shift}_r(\ph) = a >0$. Thus, for some $x_0$ we have $\ph(x)=x+a$ for $x\ge x_0$. Choose 
$x_0+2a<x_1<x_2<x_0+3a$ and $\ps\in\Diff_c(\mathbb R)$ with support in the interval $[x_1,x_2]$, so that 
$\ps(x)=x$ for $x\notin [x_1,x_2]$.
For $x\in [x_1,x_2]$ we then have  
$(\ps\i\o\ph\o\ps)(x) = \ps\i(\ps(x)+a) = \ps(x)+a.$ 
Thus, $\ps\i\o\ph\o\ps\in N$ differs from $\ph$ just on the compact interval $[x_1,x_2]$. But 
then $\ps\i\o\ph\o\ps\o\ph\i\in N$ has compact support, and we are done.

By simplicity of $\Diff_c(\mathbb R)$ we obtain $N\supseteq \Diff_c(\mathbb R)$. Therefore, the lattice of 
normal subgroups of $\Diff_{c,2}(\mathbb R)$ has a $\Diff_c(\mathbb R)$ as minimal element and, thus, is
isomorphic to the lattice of subgroups of $(\mathbb R^2,+)$, which is quite large (see 
\cite{Fuchs1970}, \cite{Fuchs1973}).

\emph{Existence of normal subgroups $N$  with
$\Diff_c(\mathbb R)\to N \to \Diff_{\mathcal S}(\mathbb R)$:}
By conjugating with $x\mapsto 1/x$ we see that the quotient group 
$\Diff_{\mathcal S}(\mathbb R)/\Diff_c(\mathbb R)$ is isomorphic to the group of germs at 0 of 
smooth diffeomorphisms $\ph:(\mathbb R,0)\to (\mathbb R,0)$ such that $\ph(x)-x$ is flat at 0:
$\ph(x)-x = o(|x|^N)$ for each $N$. This group contains infinitely many normal subgroups:
$\ph(x)-x=o(e^{-1/|x|})$ or $=o(\exp(-\exp(1/|x|)))$, and so on.

We have not looked for normal subgroups $N$ with  
$$
\Diff_{W^{\infty,1}}(\mathbb R) \to N 
\to \Diff_{W^{\infty,1}_2}(\mathbb R).
$$

\subsection{Groups of Real-Analytic Diffeomorphisms}\label{diffom}
Since the HS equation will turn out to have solutions in smaller groups of 
diffeomorphisms, we give here a description of them. For simplicity's sake, we restrict our 
attention to the periodic case. Let $\Diff^\om(S^1)$ be the real-analytic regular Lie group of all 
real-analytic diffeomorphisms of $S^1$, with the real-analytic structure described in 
\cite[Sect.~8.11]{Kriegl1990}, see also 
\cite[theorem 43.4]{Michor1997}. Recall that a mapping between $c^\infty$-open sets 
of convenient vector spaces is real-analytic if preserves smooth curves and also real-analytic curves.

\subsection{Groups of Ultra-Differentiable Diffeomorphisms}\label{diffDC}
Let us now describe the Denjoy--Carleman ultra-differentiable function classes which admit convenient 
calculus, following \cite{Kriegl2009}, \cite{Kriegl2011a}, and \cite{Michor2012c_preprint}. 
We consider a sequence $M=(M_k)$ of positive real numbers serving as weights for 
derivatives. For a smooth function $f$ on an open subset $U$ in $\mathbb R^n$, a compact set 
$K\subset U$, and for $\rh>0$  consider the set
\begin{equation}\label{eq:diffDC}
\Big\{\frac{d^kf(x)}{\rh^{k} \, k! \, M_{k}} : x \in K, k \in \mathbb N \Big\}.
\end{equation}
We define the \emph{Denjoy--Carleman classes} 
\begin{align*}
  C^{(M)}(U) &:= \{f \in C^\infty(U) : \forall \text{ compact } K \subseteq U ~\forall \rh>0: 
  \eqref{eq:diffDC} \text{ is bounded} \}, \\
  C^{\{M\}}(U) &:= \{f \in C^\infty(U) : \forall \text{ compact } K \subseteq U ~\exists \rh>0: 
  \eqref{eq:diffDC} \text{ is bounded} \}.
\end{align*}
The elements of $C^{(M)}(U)$ are said to be of \emph{Beurling type}; those of $C^{\{M\}}(U)$ of \emph{Roumieu type}. 
If $M_k=1$, for all $k$, then $C^{(M)}(\mathbb R)$ consists of the restrictions to $U$ 
of the real and imaginary parts of all entire functions, 
while $C^{\{M\}}(\mathbb R)$ coincides with the ring $C^\om(\mathbb R)$ of real-analytic functions.
We shall also write $C^{[M]}$ to mean either $C^{(M)}$ or $C^{\{M\}}$. 
We shall assume that the sequence $M=(M_k)$ has the following properties:
\begin{itemize}
	\item $M$ is log-convex: $k\mapsto\log(M_k)$ is convex, i.e. 
        $M_k^2 \le M_{k-1} \, M_{k+1}$ for all $k$.
  \item $M$ has moderate growth, i.e. 
	      $\sup_{j,k \in \mathbb N_{>0}} \Big(\frac{M_{j+k}}{M_j \, M_k}\Big)^{\frac{1}{j+k}} < \infty.$ 
	\item	In the Beurling case $C^{[M]} = C^{(M)}$ we also require that $C^\om\subseteq C^{(M)}$, 
        or equivalently $M_k^{1/k} \to \infty.$
\end{itemize}
Then, both classes $C^{[M]}$  are closed under composition and differentiation,  can be extended
to convenient vector spaces, and  form monoidally closed categories (i.e. admit convenient settings).
Moreover,  on open sets in $\mathbb R^n$, $C^{[M]}$-vector fields have $C^{[M]}$-flows, 
and 	between Banach spaces, the $C^{[M]}$ implicit function theorem holds. 

For mappings between $c^\infty$-open subsets of convenient vector spaces we have the following statements: 
\begin{itemize}
	\item For non-quasi-analytic $M$, the mapping $f$ is $C^{\{M\}}$ if it maps $C^{\{M\}}$-curves to 
        $C^{\{M\}}$-curves, by \cite{Kriegl2009}.
	\item For certain quasi-analytic $M$, the mapping $f$ is $C^{\{M\}}$ if it maps $C^{\{N\}}$-curves 
        to $C^{\{N\}}$-curves, for all non-quasi-analytic $N$ which are larger than $M$ and have the 
        aforementioned properties, by \cite{Kriegl2011a}.
	\item For any $M$, the mapping $f$ is $C^{[M]}$ if it respects $C^{[M]}$-maps from open balls in 
        Banach spaces, by \cite{Michor2012c_preprint}. 
\end{itemize}
For every $M$ with the aforementioned properties we have the regular Lie groups $\Diff^{\{M\}}(S^1)$ and 
$\Diff^{[M]}(S^1)$ (we write $\Diff^{[M]}(S^1)$ if we mean any of the two) of 
$C^{[M]}$-diffeo\-mor\-phisms of $S^1$ which is a $C^{[M]}$-group (but not better), by 
\cite[Sect.~6.5]{Kriegl2009}, \cite[Sect.~5.6]{Kriegl2011a}, and \cite[Sect.~9.8]{Michor2012c_preprint}. 

\section{Right-invariant Riemannian Metrics on Lie Groups}

\subsection{Notation on Lie Groups}
Let $G$ be a regular Lie group, which may be infinite-dimensional, with Lie
algebra $\mathfrak g$. 
Let $\mu:G\x G\to G$ be the group multiplication, $\mu_x$ the left
translation and $\mu^y$ the right translation, 
given by $\mu_x(y)=\mu^y(x)=xy=\mu(x,y)$. 

Let $L,R:\mathfrak g\to \X(G)$ be the left- 
and right-invariant vector field mappings, given by 
$L_X(g)=T_e(\mu_g).X$ and $R_X=T_e(\mu^g).X$ respectively. 
They are related by $L_X(g)=R_{\on{Ad}(g)X}(g)$.
Their flows are given by 
\begin{displaymath}
\on{Fl}^{L_X}_t(g)= g.\exp(tX)=\mu^{\exp(tX)}(g),\quad
\on{Fl}^{R_X}_t(g)= \exp(tX).g=\mu_{\exp(tX)}(g).
\end{displaymath}

We also need the right
Maurer--Cartan form $\ka=\ka^r\in\Om^1(G,\mathfrak g)$, given by  
$\ka_x(\xi):=T_x(\mu^{x\i})\cdot \xi$. It satisfies the left
Maurer--Cartan equation $d\ka-\tfrac12[\ka,\ka]_\wedge=0$, where
$[\quad,\quad]_\wedge$ denotes the wedge product of $\mathfrak g$-valued forms on
$G$ induced by the Lie bracket. Note that
$\tfrac12[\ka,\ka]_\wedge (\xi,\et) = [\ka(\xi),\ka(\et)]$.
The (exterior) derivative of the function $\on{Ad}:G\to GL(\mathfrak g)$ can be 
expressed by
\begin{displaymath}
d\on{Ad} = \on{Ad}.(\on{ad}\o\ka^l) = (\on{ad}\o \ka^r).\on{Ad}
\end{displaymath}
since we have
$d\on{Ad}(T\mu_g.X) = \frac d{dt}|_0 \on{Ad}(g.\exp(tX))
= \on{Ad}(g).\on{ad}(\ka^l(T\mu_g.X))$.

\subsection{Geodesics of a Right-Invariant Metric on a Lie  
Group}\label{adjoint} 
 
Let $\ga=\mathfrak g\x\mathfrak g\to\mathbb R$ be a positive-definite bounded (weak) inner product. Then  
\begin{equation}
\ga_x(\xi,\et)=\ga\big( T(\mu^{x\i})\cdot\xi,\, T(\mu^{x\i})\cdot\et\big) =  
     \ga\big(\ka(\xi),\,\ka(\et)\big)
\end{equation}
is a right-invariant (weak) Riemannian metric on $G$, and any
(weak) right-invariant bounded Riemannian metric is of this form, for
suitable  $\ga$.
We shall denote by $\check\ga:\mathfrak g\to \mathfrak g^*$ the mapping induced by $\ga$ from the 
Lie algebra into its dual (of bounded linear functionals) and by 
$\langle \al, X \rangle_{\mathfrak g}$ the duality evaluation between $\al\in\mathfrak g^*$ and 
$X\in \mathfrak g$.
 
Let $g:[a,b]\to G$ be a smooth curve.  
The velocity field of $g$, viewed in the right trivializations, 
coincides  with the right logarithmic derivative
\begin{displaymath}
\de^r(g)=T(\mu^{g\i})\cdot \partial_t g =  
\ka(\partial_t g) = (g^*\ka)(\partial_t),
\text{ where }
\partial_t=\frac{\partial}{\partial t}. 
\end{displaymath}
The energy of the curve $g(t)$ is given by  
\begin{equation}
\label{eq:enG}
E(g) = \frac12\int_a^b\ga_g(g',g')dt = \frac12\int_a^b 
     \ga\big( (g^*\ka)(\partial_t),(g^*\ka)(\partial_t)\big)\, dt. 
\end{equation}
For a variation $g(s,t)$ with fixed endpoints we then use that
$$d(g^*\ka)(\p_t,\p_s)=\p_t(g^*\ka(\p_s))-\p_s(g^*\ka(\p_t))-0,$$ partial integration and the
left Maurer--Cartan equation to obtain
\begin{align*} 
&\partial_sE(g) = \frac12\int_a^b2 
     \ga\big( \partial_s(g^*\ka)(\partial_t),\,
                            (g^*\ka)(\partial_t)\big)\, dt
\\&
= \int_a^b \ga\big( \partial_t(g^*\ka)(\partial_s) - 
         d(g^*\ka)(\partial_t,\partial_s),\,
                (g^*\ka)(\partial_t)\big)\,dt
\\&
= -\int_a^b \ga\big( (g^*\ka)(\partial_s),\,\partial_t(g^*\ka)(\partial_t)\big)\,dt 
  - \int_a^b \ga\big( [(g^*\ka)(\partial_t),(g^*\ka)(\partial_s)],\, (g^*\ka)(\partial_t)\big)\, dt
\\&
= -\int_a^b \big\langle\check\ga(\partial_t(g^*\ka)(\partial_t)),\,
  (g^*\ka)(\partial_s)\big\rangle_{\mathfrak g} \, dt
\\&\qquad\qquad
  - \int_a^b \big\langle  \check\ga((g^*\ka)(\partial_t)),\, 
  \on{ad}_{(g^*\ka)(\partial_t)}(g^*\ka)(\partial_s)\big\rangle_{\mathfrak g} \, dt
\\&
= -\int_a^b 
\big\langle\check\ga(\partial_t(g^*\ka)(\partial_t)) + (\on{ad}_{(g^*\ka)(\partial_t)})^{*}\check\ga((g^*\ka)(\partial_t)),\, 
  (g^*\ka)(\partial_s)\big\rangle_{\mathfrak g} \, dt.
\end{align*}
Thus the curve $g(0,t)$ is critical for the energy \eqref{eq:enG} if and only if
$$
\check\ga(\partial_t(g^*\ka)(\partial_t)) + 
(\on{ad}_{(g^*\ka)(\partial_t)})^{*}\check\ga((g^*\ka)(\partial_t)) = 0.
$$
In terms of the right logarithmic derivative $u:[a,b]\to \mathfrak g$ of  
$g:[a,b]\to G$, given by  
$u(t):= g^*\ka(\partial_t) = T_{g(t)}(\mu^{g(t)\i})\cdot g'(t)$, 
the geodesic equation has the expression
\begin{equation}
\boxed{\quad
\p_t u = - \,\check\ga\i\on{ad}(u)^{*}\;\check\ga(u)\quad} 
\end{equation}
Thus the geodesic equation exists in general if and only if 
$\on{ad}(X)^{*}\check\ga(X)$ is in the image of $\check\ga:\mathfrak g\to\mathfrak g^*$, i.e.
\begin{equation}
\label{eq:adim}
\on{ad}(X)^{*}\check\ga(X) \in \check\ga(\mathfrak g)
\end{equation}
 for every 
$X\in\mathfrak X$. Condition \eqref{eq:adim} then leads to the existence of the so-called 
Christoffel symbols. Interestingly, it is not neccessary for the more restrictive condition 
$\on{ad}(X)^{*}\check\ga(Y) \in \check \ga \in \mathfrak g$ to be satisfied in order to obtain the 
geodesic equation, Christoffel symbols and the curvature; compare with \cite[Lemma 
3.3]{Michor2006a}. Note here the appearance of the geodesic equation for the {\it momentum} $p:=\ga(u)$:
$$
p_t = - \on{ad}(\check\ga\i(p))^*p.
$$
Subsequently, we shall encounter situations where \eqref{eq:adim} is satisfied but where the usual 
transpose $\on{ad}^\top(X)$ of $\on{ad}(X)$, 
\begin{equation}
\label{eq:ad_trans}
\on{ad}^\top(X) := \check\ga\i\o\on{ad}_X^*\o\; \check\ga
\end{equation}
does not exist for all $X$. 

\subsection{Covariant Derivative} 
The right trivialization $(\pi_G,\ka^r):TG\to G\x{\mathfrak g}$  
induces the isomorphism $R:C^\infty(G,{\mathfrak g})\to \X(G)$, given by  
$R(X)(x):= R_X(x):=T_e(\mu^x)\cdot X(x)$, for $X\in C^\infty(G,{\mathfrak g})$ and
$x\in G$. Here $\X(G):=\Ga(TG)$ denotes the Lie algebra of all vector 
fields. For the Lie bracket and the Riemannian metric we have 
\begin{align*} 
[R_X,R_Y] &= R(-[X,Y]_{\mathfrak g} + dY\cdot R_X - dX\cdot R_Y),
\\
R\i[R_X,R_Y] &= -[X,Y]_{\mathfrak g} + R_X(Y) - R_Y(X),
\\
\ga_x(R_X(x),R_Y(x)) &= \ga( X(x),Y(x))\,,\, x\in G. 
\end{align*}
In what follows, we shall perform all computations in $C^\infty(G,{\mathfrak g})$ instead of 
$\X(G)$. In particular, we shall use the convention
\begin{displaymath}
\nabla_XY := R\i(\nabla_{R_X}R_Y)\quad\text{ for }X,Y\in C^\infty(G,{\mathfrak g}).
\end{displaymath}
to express the Levi-Civita covariant derivative. 
 
\begin{lem*}
Assume that for all $\xi\in{\mathfrak g}$ the element $\on{ad}(\xi)^*\check\ga(\xi)\in\mathfrak g^*$ is in the image of
$\check\ga:\mathfrak g\to\mathfrak g^*$   
and that  
$\xi\mapsto \check\ga\i\on{ad}(\xi)^*\check\ga(\xi)$ is bounded  quadratic (or, equivalently, smooth). 
Then the Levi-Civita covariant derivative of the metric $\ga$ 
exists and is given for any $X,Y \in C^\infty(G,{\mathfrak g})$  in
terms of the isomorphism $R$ by
\begin{equation*}
\nabla_XY= dY.R_X + \rh(X)Y - \frac12\on{ad}(X)Y,
\end{equation*}
where 
\[
\rh(\xi)\et = \tfrac14\check\ga\i\big(\on{ad}_{\xi+\et}^*\check\ga(\xi+\et) - \on{ad}_{\xi-\et}^*\check\ga(\xi-\et)\big) = \tfrac12\check\ga\i\big(\on{ad}_\xi^*\check\ga(\et) + \on{ad}_\et^*\check\ga(\xi)\big)
\]
is the polarized version. 
$\rh:{\mathfrak g}\to L({\mathfrak g},{\mathfrak g})$ is bounded, and we have $\rh(\xi)\et=\rh(\et)\xi$.  
We also have
\begin{gather*}
\ga\big(\rh(\xi)\et,\ze\big) = \frac12\ga(\xi,\on{ad}(\et)\ze) + \frac12\ga(\et,\on{ad}(\xi)\ze),
\\
\ga(\rh(\xi)\et,\ze) + \ga(\rh(\et)\ze,\xi) + \ga(\rh(\ze)\xi,\xi) = 0.
\end{gather*}
\end{lem*}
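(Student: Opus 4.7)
The plan is to recognise the formula as what the Koszul identity forces on right-invariant fields, and then to verify that it really does define a torsion-free metric connection on all of $C^\infty(G,\mathfrak g)$.

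First I would define $\rh$ by polarisation of the bounded quadratic map $\xi\mapsto\check\ga\i\on{ad}(\xi)^{*}\check\ga(\xi)$. The two standing hypotheses guarantee that this polarisation gives a bounded symmetric bilinear map $\mathfrak g\times\mathfrak g\to\mathfrak g$, so the symmetry $\rh(\xi)\et=\rh(\et)\xi$ is automatic from the definition. For the dual formula $\ga(\rh(\xi)\et,\ze)=\tfrac12\ga(\xi,\on{ad}(\et)\ze)+\tfrac12\ga(\et,\on{ad}(\xi)\ze)$ I would pair the polarisation identity with $\ze$,
$$4\ga(\rh(\xi)\et,\ze)=\bigl\langle\on{ad}(\xi+\et)^{*}\check\ga(\xi+\et)-\on{ad}(\xi-\et)^{*}\check\ga(\xi-\et),\,\ze\bigr\rangle_{\mathfrak g},$$
move each $\on{ad}^{*}$ across using $\langle\on{ad}(u)^{*}\al,v\rangle_{\mathfrak g}=\langle\al,\on{ad}(u)v\rangle_{\mathfrak g}$, and collect the symmetric terms. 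The cyclic identity then drops out by summing three cyclic permutations of this formula and invoking antisymmetry of the bracket.

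Second, with $\rh$ in hand, I would verify that $\nabla_X Y:=dY\cdot R_X+\rh(X)Y-\tfrac12\on{ad}(X)Y$ satisfies the two defining axioms of the Levi-Civita connection. Smoothness, $C^\infty(G)$-linearity in $X$, and the Leibniz rule in $Y$ are immediate from the defining expression. For torsion-freeness, the $\rh$ terms cancel by symmetry and antisymmetry of $\on{ad}$ supplies an extra factor, giving
$$\nabla_X Y-\nabla_Y X=dY\cdot R_X-dX\cdot R_Y-\on{ad}(X)Y,$$
which is exactly $R\i[R_X,R_Y]$ by the identity recorded before the lemma. For metric compatibility, the right-trivialised metric is constant, so $R_X\,\ga(Y,Z)=\ga(dY\cdot R_X,Z)+\ga(Y,dZ\cdot R_X)$; this reduces the check to the purely algebraic statement
$$\ga\bigl(\rh(X)Y-\tfrac12\on{ad}(X)Y,\,Z\bigr)+\ga\bigl(Y,\,\rh(X)Z-\tfrac12\on{ad}(X)Z\bigr)=0,$$
and this follows directly from the dual formula for $\rh$ established in the first step. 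Uniqueness is then the usual Koszul argument.

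The main obstacle is the weak-metric subtlety: the Koszul formula naturally produces an element of $\mathfrak g^{*}$ rather than $\mathfrak g$, so there is no a priori reason for the inverse $\check\ga\i$ to apply. The two standing hypotheses — that $\on{ad}(\xi)^{*}\check\ga(\xi)$ lies in $\check\ga(\mathfrak g)$ and that the resulting map is bounded quadratic — are precisely what is needed to turn the formal Koszul identity into an honest smooth vector field on $G$ via $\rh$. Once this ``musical'' step is secured, the remainder of the argument is purely algebraic manipulation in $\mathfrak g$.
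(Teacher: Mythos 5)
Your proposal is correct and follows essentially the same route as the paper: define $\rh$ by polarizing the bounded quadratic map, then verify that the stated $\nabla$ is a covariant derivative which is torsion-free (the $\rh$-terms cancel by symmetry) and metric (via the dual formula for $\ga(\rh(\xi)\et,\ze)$). The paper's proof is just a terser version of exactly these checks, so no further comparison is needed.
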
 

\begin{proof}
It is easily checked that $\nabla$ is a covariant derivative. The Riemannian
metric is covariantly constant since
\begin{align*}
R_X\ga( Y,Z) &= \ga( dY.R_X,Z) + \ga( Y,dZ.R_X)
=\ga( \nabla_XY,Z) + \ga( Y,\nabla_XZ).  
\end{align*}
Since $\rh$ is symmetric, the connection is also torsion-free:
\begin{displaymath}
\nabla_XY-\nabla_YX + [X,Y]_{\mathfrak g} - dY.R_X + dX.R_Y = 0.
\end{displaymath}
\end{proof}

\subsection{Curvature} 
For $X,Y\in C^\infty(G,{\mathfrak g})$ we have
$$ 
[R_X,\on{ad}(Y)] = \on{ad}(R_X(Y))\quad\text{  and }\quad  
[R_X,\rh(Y)] = \rh(R_X(Y)).
$$
The Riemannian curvature is then computed by  
\begin{align*} 
\mathcal{R}(X,Y) &=  
[\nabla_X,\nabla_Y]-\nabla_{-[X,Y]_{\mathfrak g}+R_X(Y)-R_Y(X)}
\\&
= [R_X+\rh_X-\tfrac12\on{ad}_X, 
     R_Y+\rh_Y-\tfrac12\on{ad}_Y]
\\&\quad
- R({-[X,Y]_{\mathfrak g} + R_X(Y) - R_Y(X)}) 
-\rh({-[X,Y]_{\mathfrak g} + R_X(Y)\! - R_Y(X)})
\\&\quad
+\frac12\on{ad}(-[X,Y]_{\mathfrak g} + R_X(Y)\! - R_Y(X)) 
\\&
= [\rh_X,\rh_Y] +\rh_{[X,Y]_{\mathfrak g}} -\frac12[\rh_X,\on{ad}_Y] +\frac12[\rh_Y,\on{ad}_X] 
-\frac14\on{ad}_{[X,Y]_{\mathfrak g}}.
\end{align*}
which is visibly a tensor field.

\subsection{Sectional Curvature}\label{sectionalcurvature}
For the linear two-dimensional subspace $P\subseteq {\mathfrak g}$, that is spanned by linearly independent $X,Y\in {\mathfrak g}$, the 
sectional curvature is defined as
$$
k(P) = -\frac{\ga\big(\mathcal R(X,Y)X,Y\big)}{\|X\|_\ga^2\|Y\|_\ga^2-\ga(X,Y)^2}.
$$
For the numerator we obtain
\begin{align*}
\ga\big(\mathcal R(&X,Y)X,Y\big) = \ga(\rh_X\rh_YX,Y) - \ga(\rh_Y\rh_XX,Y) + \ga(\rh_{[X,Y]}X,Y)
\\&\quad
-\frac12\ga(\rh_X[Y,X],Y) + \frac12\ga([Y,\rh_XX],Y) 
\\&\quad
+0 - \frac12\ga([X,\rh_YX],Y) -\frac14\ga([[X,Y],X],Y)
\\&
= \ga(\rh_XX,\rh_YY) - \|\rh_XY\|_\ga^2 + \frac34\|[X,Y]\|_\ga^2  
\\&\quad
-\frac12\ga(X,[Y,[X,Y]]) + \frac12\ga(Y,[X,[X,Y]])
\\&
= \ga(\rh_XX,\rh_YY) - \|\rh_XY\|_\ga^2 + \frac34\|[X,Y]\|_\ga^2  
\\&\quad
-\ga(\rh_XY,[X,Y]]) + \ga(Y,[X,[X,Y]]).
\end{align*}
If the adjoint $\on{ad}(X)^\top:\mathfrak g\to \mathfrak g$ exists, this is easily seen to coincide with 
Arnold's original formula \cite{Arnold1966},
\begin{align*}
&\ga(\mathcal R(X,Y)X,Y) = 
- \frac14\|\on{ad}(X)^\top Y+\on{ad}(Y)^\top X\|^2_\ga
+ \ga(\on{ad}(X)^\top X,\on{ad}(Y)^\top Y)   
\\&
+ \frac12\ga(\on{ad}(X)^\top Y-\on{ad}(Y)^\top X,\on{ad}(X)Y)  
+ \frac34\|[X,Y]\|_\ga^2.
\end{align*}

\section{Homogeneous $H^1$-Metric on $\Diff(\R)$ and the Hunter--Saxton Equation}\label{section4}

In this section we will study the homogeneous $H^1$ or $\dot H^1$-metric on the various diffeomorphism groups of $\R$ defined in Sect. \ref{diffgroups}. It was shown in  \cite{Misiolek2003} that the geodesic equation of the $\dot H^1$-metric on $\on{Diff}(S^1)$ is the  HS equation. We will show that suitable diffeomorphism groups on the real line also have the HS equation as geodesic equation. In \cite{Lenells2007} a way was found to map $\on{Diff}(S^1)$ isometrically to an open subset of an $L^2$-sphere in $C^\infty(S^1,\R)$. We will generalize this representation to the non-periodic case.

In the situation studied here --- diffeomorphism groups on the real line --- the resulting geometry will be different from the periodic case. Some of the diffeomorphism groups will be flat in the sense of Riemannian geometry, while others will be  submanifolds of a flat space (see \cite[Sect. 27.11]{Kriegl1997} for the definiton of (splitting) submanifolds in an infinite dimensional setting).

\subsection{Setting}\label{setting}
In this section, we shall use any of the following regular Lie groups:
\begin{enumerate}
 \item We will denote by $\A(\R)$ any of the spaces $C^\infty_c(\R)$, $\mathcal S(\R)$, 
 $W^{\infty,1}(\R)$, or $H^{\infty}_0(\mathbb R)$. By $\on{Diff}_\A(\R)$ we will denote
 the corresponding groups $\Diff_c(\R)$, $\Diff_{\mathcal S}(\R)$, and $\Diff_{W^{\infty,1}}(\R)$.
  as defined in  Sects.  \ref{diffc}, \ref{diffs} and \ref{diffw} respectively.
 \item Similarly $\AA(\R)$ will denote any of the spaces $C^\infty_{c,1}(\R)$, $\mathcal S_1(\R)$, 
 or $W^{\infty,1}_1(\R)$. By $\on{Diff}_\AA(\R)$ we will denote the corresponding groups
 $\Diff_{c,1}(\R)$, $\Diff_{\mathcal S_1}(\R)$, or $\Diff_{W^{\infty,1}_1}(\R)$. 
  as defined in  Sects.  \ref{diffc}, \ref{diffs} and \ref{diffw} respectively.
\end{enumerate}

\subsection{$\dot H^1$-Metric}\label{H1metric}
For $\Diff_{\A}(\mathbb R)$ and $\Diff_{\AA}(\mathbb R)$ the homogeneous $H^1$-metric is given by
$$G_{\ph}(X\circ\ph,Y\circ\ph) = G_{\on{Id}}(X,Y)= \int_{\R} X'(x)Y'(x)\;dx,$$
where $X, Y$ are elements of the Lie algebra $\A(\R)$ or $\AA(\R)$. We shall also use the notation
$$\langle\cdot,\cdot\rangle_{\dot H^1}:=G(\cdot,\cdot).$$

\begin{thm*} 
On $\on{Diff}_\AA(\R)$ the geodesic equation is the HS equation
\begin{equation}
\label{eq:hs_geod}
\boxed{\begin{aligned}
(\ph_t)\circ\ph\i=u  \qquad u_{t} = -u u_x +\frac12 \int_{-\infty}^x   (u_x(z))^2 \,dz,
\end{aligned}}
\end{equation}
and the induced geodesic distance is positive. 

On the other hand, the geodesic equation does not exist on the subgroup $\Diff_{\A}(\mathbb R)$, since the adjoint 
$\on{ad}(X)^*\check G_{\on{Id}}(X)$ does not lie in $\check G_{\on{Id}}(\A(\R))$ for all $X\in\A(\R)$. 
\end{thm*}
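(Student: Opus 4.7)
The plan is to apply the general Euler--Arnold formula $\p_t u = -\check G\i\on{ad}(u)^*\check G(u)$ from Section~\ref{adjoint}. In the present setting, $\check G$ is (weakly) the operator $-\p_x^2$, so $\check G\i$ amounts to two integrations, and all the content lies in the choice of the two constants of integration. On $\AA$ both constants are fixed by the condition $f(-\infty)=0$ that is built into $\AA$, and the resulting antiderivative lands in $\AA$; on $\A$ one additionally needs $f(+\infty)=0$, and this is precisely the requirement that will fail.

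In the right-invariant convention one has $\on{ad}(X)Y=X_xY-XY_x$. Two integrations by parts---with boundary terms dispatched by the decay of $u_x,u_{xx}\in\A$ at $\pm\infty$---turn the pairing $G(u,\on{ad}(u)v)$ for $v\in\AA$ into
$$\langle\on{ad}(u)^*\check G(u),v\rangle_{\mathfrak g}=-\int_{\R}\bigl(uu_{xx}+\tfrac12u_x^2\bigr)_x\,v\,dx.$$
Solving $-w_{xx}$ equal to this density with the normalisations $w_x(-\infty)=0$ and $w(-\infty)=0$ (both dictated by $w\in\AA$), and using $uu_{xx}=(uu_x)_x-u_x^2$, yields the explicit primitive
$$w(x)=u(x)u_x(x)-\tfrac12\int_{-\infty}^x u_x(y)^2\,dy,$$
so that $u_t=-w$ is exactly the HS equation \eqref{eq:hs_geod}. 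Inspecting $w_x=uu_{xx}+\tfrac12u_x^2$ case by case for $\A\in\{C^\infty_c,\mathcal S,W^{\infty,1}\}$ (a bounded factor in $\AA$ times a factor in $\A$, plus the square of an $\A$-function) shows $w\in\AA$, so the hypotheses of the Lemma in Section~\ref{adjoint} hold and the geodesic equation is a genuine ODE on $\AA$.

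For the negative statement, fix $u\in\A\setminus\{0\}$ and suppose $h\in\A$ satisfied $G(h,v)=G(u,\on{ad}(u)v)$ for all $v\in\A$. Since all boundary terms now vanish at both $\pm\infty$, the same calculation gives $-h_{xx}=-(uu_{xx}+\tfrac12u_x^2)_x$ as continuous functions, so $h$ equals $w$ up to an affine function; the decay of $h$ at $-\infty$ kills that affine function, forcing $h=w$. But
$$w(+\infty)=-\tfrac12\int_{\R}u_x(y)^2\,dy<0,$$
contradicting $h\in\A$. Hence for every nonzero $X\in\A(\R)$ one has $\on{ad}(X)^*\check G(X)\notin\check G(\A(\R))$, and condition \eqref{eq:adim} fails.

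Positivity of the geodesic distance I handle by foreshadowing the $R$-map constructed in the next subsection. Writing $R(\ph)=2(\sqrt{\ph_x}-1)$, the change of variables $y=\ph(x)$ together with $u_x\o\ph=\ph_{tx}/\ph_x$ gives $\|u(t)\|_{\dot H^1}=\|\p_tR(\ph(t))\|_{L^2}$ along any smooth path $t\mapsto\ph(t)$, so Riemannian lengths on $\Diff_\AA(\R)$ coincide with $L^2$-lengths of the image paths in $\A(\R)$ and are therefore bounded below by $\|R(\ph_1)-R(\ph_0)\|_{L^2}>0$ whenever $\ph_0\neq\ph_1$ (injectivity of $R$ follows because $\ph_x$ together with $\ph(-\infty)=-\infty$ recovers $\ph$). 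The main technical obstacle, which I have otherwise glossed over, is checking uniformly across the three function classes $\A$ that every integration by parts is justified, that $w$ really lands in $\AA$, and that $R$ lands in $\A$ rather than merely in some larger space such as $L^2$.
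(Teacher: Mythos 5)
Your proposal is correct and follows essentially the same route as the paper: compute $\on{ad}(u)^*\check G_{\on{Id}}(u)$ by integration by parts, exhibit it explicitly as $\check G_{\on{Id}}$ of the function $uu_x-\tfrac12\int_{-\infty}^x u_x^2\,dy$ (which lies in $\AA$ but, since its value at $+\infty$ is $-\tfrac12\int_\R u_x^2<0$ for $u\neq0$, never in $\A$), and obtain positivity of the geodesic distance from the isometric $R$-map of the following subsection. The only cosmetic difference is that you carry out the two integrations with explicit boundary normalisations and note uniqueness of the preimage up to an affine function, whereas the paper writes the second antiderivative directly.
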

Note that this is a natural example of a non-robust Riemannian manifold in the sense of \cite[Sect.~2.4]{Michor2012a_preprint}.

\begin{proof} Note that $\check G_{\on{Id}}: \A_1(\R)\to \A_1(\R)^\ast$ is given by $\check G_{\on{Id}}(X) = -X''$ if we use the $L^2$-pairing $X\mapsto (Y\mapsto \int XYdx)$ to embed functions into the space of distributions.
We now compute the adjoint of the operator $\on{ad}(X)$ as defined in Sect.~\ref{adjoint}. 
\begin{align*}
\big\langle \on{ad}(X)&^*\check G_{\on{Id}}(Y),Z\big\rangle = \check G_{\on{Id}}(Y,\on{ad}(X)Z) = G_{\on{Id}}(Y,-[X,Z]) 
\\&
=\int_{\mathbb R} Y'(x)\big(X'(x)Z(x)-X(x)Z'(x)\big)'\,dx\
\\&
=\int_{\mathbb R} Z(x)\big(X''(x)Y'(x) - (X(x)Y'(x))''\big) \,dx.
\end{align*}
Therefore, the adjoint as an element of $\A_1^\ast$ is given by  
\begin{align*}
\on{ad}(X)^* \check G_{\on{Id}}(Y) = X''Y' -(XY')'' .
\end{align*}
For $X=Y$ we can rewrite this as
\begin{align*}
\on{ad}(X)^* \check G_{\on{Id}}(X)&= \tfrac12\big((X'^2)' - (X^2)'''\big)
= \frac12\Big(\int_{-\infty}^x X'(y)^2\,dy -(X^2)' \Big)''
\\&
= \frac12\check G_{\on{Id}}\Big(-\int_{-\infty}^x X'(y)^2\,dy +(X^2)' \Big).
\end{align*}
If $X \in \AA(\R)$ then the function $-\tfrac12\int_{-\infty}^x X'(y)^2\,dy +\tfrac12(X^2)'$ is again an element of $\AA(\R)$. This follows immediately from the definition of $\AA(\R)$.  Therefore, the geodesic equation exists on $\on{Diff}_\AA(\R)$ and is given by \eqref{eq:hs_geod}.

However, if $X \in \A(\R)$, a neccessary condition for $\int_{-\infty}^x (X'(y))^2 dy \in \A(\R)$ would be $\int_{-\infty}^\infty X'(y)^2 dy = 0$, which would imply $X'=0$. Thus, the geodesic equation does not exist on $\A(\R)$.

The positivity of geodesic distance will follow from the explicit formula given in Corollary \ref{geod_dist}. 
\end{proof}

\subsection*{Remark}
One obtains the classical form of the HS equation 
\begin{align*}
u_{tx} = -u u_{xx}-\frac12 u^2_x ,
\end{align*} 
by differentiating the preceding geodesic Eq. \eqref{eq:hs_geod}.
In Sect.~\ref{RmapDiffAA} we will use a geometric argument to give an explicit solution formula, 
which will also imply the well-posedness of the equation. For $\AA(\R)=W^{\infty,1}_1(\R)$ 
an analytic proof of well-posedness could also be carried out similarly 
to that in \cite[Sect. 10]{Bauer2011b} by adapting the arguments to $\R$.
Furthermore, using this geometric trick, we will
conclude that the curvature of the $\dot H^1$ metric vanishes. One can also show this statement 
directly using the adaption of Arnold's formula presented in Sect.~\ref{sectionalcurvature}. From 
the foregoing proof one can easily deduce the formula for the mapping $\rh$: 
\begin{align*}
 \rho(X)Y&=\frac12\check G^{-1}\left(\on{ad}(X)^* G(Y)+\on{ad}(X)^* G(Y)\right)\\&=
 \frac12\check G^{-1}\left(X''Y'-(XY')''+ Y''X'-(YX')''\right)\\&=
  \frac12\check G^{-1}\left((X'Y')'+(XY)'''\right)  \\&=
  \frac12 \left(-\int_{-\infty}^x (X'Y')\;dx + (XY)'\right)
\end{align*}
Using this formula, the desired formula for the curvature is a straightforward calculation.

\subsection*{Remark}
For general $X \neq Y \in \AA(\R)$ we will have $\on{ad}(X)^* \check G_{\on{Id}}(Y) 
\notin \check G_{\on{Id}}(\AA(\R))$. If there were a function $Z \in \AA(\R)$ such that 
$-Z'' = X''Y' - (XY')''$ then a necessary condition would be $0 = Z'(-\infty) = 
-\int_{-\infty}^\infty X''Y' dx$, which would in general not be satisfied. Thus, the transpose of 
$\on{ad}(X)$ as defined in \eqref{eq:ad_trans} does not exist; only the symmetric version $X 
\mapsto \check G_{\on{Id}}\i(\on{ad}(X)^\ast\check G_{\on{Id}} (X))$ exists.

\subsection{The Square-Root Representation on $\Diff_{\AA}(\R)$.}\label{RmapDiffAA}

We will define a map $R$ from $\on{Diff}_\AA(\R)$ to the space
\[
\A(\R,\R_{>-2}) = \left\{ f \in \A(\R) \;:\; f(x) > -2 \right\},
\]
such that the pullback of the $L^2$-metric on $\A(\R,\R_{>-2})$ is the $\dot H^1$-metric on the space $\on{Diff}_\AA(\R)$. Since $\A(\R,\R_{>-2})$ is an open subset of $C^\infty(\R)$ this implies that $\on{Diff}_\AA(\R)$ with the $\dot H^1$-metric is a flat space in the sense of Riemannian geometry. This is an adaptation of the square-root representation of $\on{Diff}(S^1)/S^1$ used in \cite{Lenells2007}; see also Sect.~\ref{periodic}, where we review this construction.

\begin{thm*}
We define the $R$-map by
$$ R:\left\{
\begin{aligned}
 \Diff_{\AA}(\mathbb R)&\to 
\A\big(\R,\mathbb R_{>-2}\big)\subset\A(\R,\R)\\
\ph &\mapsto
2\;\big((\ph')^{1/2}-1\big) .
\end{aligned}\right.
$$
The $R$-map is invertible with inverse
$$R\i :\left\{
\begin{aligned}\A\big(\R,\mathbb R_{>-2}\big) &\to \Diff_{\AA}(\mathbb R)
\\\ga&\mapsto x+\frac14 \int_{-\infty}^x \big(\ga^2+4\ga\big)\;dx .
\end{aligned}\right.$$ 
The pullback of the flat $L^2$-metric via $R$ is the $\dot H^1$-metric on $\on{Diff}_\AA(\R)$, i.e.
$$R^*\langle \cdot,\cdot\rangle_{L^2} = \langle\cdot,\cdot\rangle_{\dot H^1} .$$
Thus the space $\big(\Diff_{\AA}(\R),\dot H^1\big)$ is a flat space in the sense of Riemannian geometry.
\end{thm*}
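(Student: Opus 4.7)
My plan is to establish four assertions in turn: (i) $R$ is well-defined as a map into $\A(\R,\R_{>-2})$; (ii) the displayed formula gives a two-sided inverse for $R$; (iii) the pullback identity $R^{*}\langle\cdot,\cdot\rangle_{L^2}=\langle\cdot,\cdot\rangle_{\dot H^1}$ follows from a direct tangent-map computation; (iv) flatness is then automatic because the target is an open subset of the affine space $\A(\R)$ carrying the translation-invariant $L^2$-inner product.

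\textbf{Well-definedness and inversion.} Writing $\ph=\on{Id}+f$ with $f\in\AA(\R)$, so that $\ph'=1+f'>0$ and $f'\in\A(\R)$, I will factor $\sqrt{1+s}-1=s\cdot h(s)$ with $h$ smooth on $\{s>-1\}$ and $h(0)=\tfrac12$. Since $f'$ is bounded and decays at $\pm\infty$ (for each of the three classes $C^\infty_c,\mathcal S,W^{\infty,1}$ described in Sections \ref{diffc}--\ref{diffw}), Fa\`a di Bruno applied to $f'\mapsto h(f')$, combined with the multiplication properties of $\A(\R)$, yields $R(\ph)=2f'\cdot h(f')\in\A(\R)$; the pointwise bound $R(\ph)>-2$ is immediate from $\sqrt{\ph'}>0$. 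Conversely, for $\ga\in\A(\R,\R_{>-2})$ the function $\tfrac14(\ga^2+4\ga)$ is a bounded-times-decaying product, hence lies in $\A(\R)$, so its antiderivative from $-\infty$ lies in $\AA(\R)$; moreover $(R\i\ga)'=(1+\ga/2)^2>0$, giving a diffeomorphism. That $R$ and $R\i$ are mutual inverses then reduces to the algebraic identity $(1+\ga/2)^2=1+\ga+\ga^2/4$.

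\textbf{Isometry and flatness.} For a tangent vector $X\o\ph\in T_\ph\Diff_\AA(\R)$ with $X\in\AA(\R)$, a direct calculation gives
\begin{equation*}
T_\ph R\cdot (X\o\ph)=(\ph')^{-1/2}(X\o\ph)'=(X'\o\ph)(\ph')^{1/2}.
\end{equation*}
The change of variables $y=\ph(x)$ then yields
\begin{equation*}
\|T_\ph R\cdot (X\o\ph)\|_{L^2}^2=\int_\R(X'\o\ph)^2\,\ph'\,dx=\int_\R X'(y)^2\,dy=\langle X,X\rangle_{\dot H^1},
\end{equation*}
which by right-invariance equals $G_\ph(X\o\ph,X\o\ph)$. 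Polarization upgrades this to the full bilinear pullback identity. Since the target is an open subset of an affine pre-Hilbert space, its Riemannian structure is flat, and $R$ being a global isometry onto this open set transports flatness back to $(\Diff_\AA(\R),\dot H^1)$.

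\textbf{Expected main difficulty.} The real work is in (i): verifying that $\sqrt{1+f'}-1$ remains in the prescribed space $\A(\R)$. The $C^\infty_c$ case is trivial because the composition vanishes wherever $f'$ does. For $\mathcal S$ and $W^{\infty,1}$, Fa\`a di Bruno produces polynomials in $f',f'',\dots$ divided by odd half-powers of $1+f'$; to conclude, I must use that for $\ph\in\Diff_\AA(\R)$ the quantity $1+f'$ is bounded away from zero on all of $\R$ (since $f'\in\A(\R)$ is bounded, decays at infinity, and $1+f'>0$ everywhere with $1+f'\to 1$ at $\pm\infty$), giving uniform control of the fractional denominators, and then combine this with the Schwartz decay, respectively the $L^1$-integrability, of each factor in the numerator. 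The inverse direction is less delicate because only the polynomial nonlinearity $\ga^2$ appears.
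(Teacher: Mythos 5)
Your proposal is correct and follows essentially the same route as the paper's proof: you factor $\sqrt{1+s}-1=s\,h(s)$ and use that $1+f'$ is bounded away from $0$ together with the module/multiplication properties of $\A(\R)$ (the paper does this via the integral form of the Taylor remainder, writing $R(\ph)=f'+F(f')f'$ with $F(f')$ bounded), verify the inverse through the identity $\tfrac14(\ga+2)^2-1=f'$, and compute $T_\ph R\cdot(X\o\ph)=(\ph')^{1/2}(X'\o\ph)$ followed by the change of variables $y=\ph(x)$. No substantive differences.
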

Here $\langle \cdot,\cdot\rangle_{L^2}$ denotes the $L^2$-inner product on $\A(\R)$ interpreted as a Riemannian metric on $\A(\R,\R_{>-2})$, which does not depend on the basepoint, i.e.
\[
G^{L^2}_\ga(h,k) = \langle h,k\rangle_{L^2}=\int_\R h(x)k(x)\;dx,
\]
for $h,k \in \A(\R) \cong T_\ga \A(\R,\R_{>-2})$.

\begin{proof}
We will first prove that for $\ph\in \Diff_{\AA}(\mathbb R)$ the image $R(\ph)$ is an element of $\A(\R,\R_{>-2})$. 
To do so, we write  $\ph(x)=x+f(x)$, with $f\in\AA$. Using a  Taylor expansion of $\sqrt{1+x}$ around $x=0$,
\[
\sqrt{1+x} = 1 + \frac 12 x - \frac 14 \int_0^1 \frac {1-t}{(1+tx)^{\frac 32}} dt\; x^2,
\]
we obtain
\begin{align*}
R(\ph) &= 2\big((\ph')^{1/2}-1\big) = 2 \sqrt{1+f'}-2 \\
&= f' - \frac 12 \int_0^1 \frac {1-t}{(1+tf')^{\frac 32}}f'\, dt\, f'  
 \\&=: f'+F(f')f'   ,
\end{align*}
with $F \in C^{\om}(\R_{>-1},\R)$ satisfying $F(0)=0$. 

Because $\ph = \on{Id} + f$ is a diffeomorphism, we have $f'>-1$ and since $f'\in \A(\R)$
implies that $f'$ vanishes at $-\infty$ and at $\infty$ we can can even conclude that $f'>-1+\ep$ for some $\ep>0$. 
Therefore, $F(f')$ is a bounded function for each $f'\in \Diff_{\AA}(\R)$. Using that all the spaces $\A(\R)$ are $\mathcal B(\R)$-modules we conclude  
that $F(f')f'$ and, hence, $R(\ph)$ are elements of $\A(\R)$.

To check that the mapping $R: \Diff_{\AA}(\mathbb R)\to \A\big(\R,\mathbb R_{>-2}\big)$ 
is bijective, we use the identity $\frac14(\ga(x)+2)^2-1=f'(x)$ with $\ga=R(\ph)=R(\on{Id}+f)$. Using this identity it is straightforward to calculate that
$$R\circ R\i=\on{Id}_{\A},\qquad R\i\circ R=\on{Id}_{\on{Diff}} .$$

To compute the pullback of the $L^2$-metric via the $R$-map we first need to calculate its tangent mapping. To do this, let $h=X\circ\ph \in T_{\ph}\Diff_{\AA}(\R)$ and let $t\mapsto \ps(t)$ be a smooth curve in 
$\Diff_{\AA}(\mathbb R)$ with $\ps(0)=\on{Id}$ and $\p_t|_0 \ps(t) = X$. 
We have:
\begin{align*}
T_\ph R.h &= \p_t|_0 R(\ps(t)\o\ph) 
= \p_t|_0 2\Big(((\ps(t)\o\ph)_x)^{1/2} -1\Big)
= \p_t|_0 2((\ps(t)_x\o\ph)\,\ph_x)^{1/2}
\\&
= 2(\ph_x)^{1/2}\p_t|_0 ((\ps(t)_x)^{1/2}\o\ph)
=  (\ph_x)^{1/2}\big(\frac{\ps_{tx}(0)}{(\ps(0)_x)^{-1/2}}\o\ph\Big)
\\& 
= (\ph_x)^{1/2}(X'\o\ph) 
 = (\ph')^{1/2}(X'\o\ph)    .
\end{align*}
Using this formula we have for $h=X_1\circ\ph,k= X_2\circ\ph$: 
\begin{equation*}
 R^*\langle h ,k \rangle_{L^2} =  \langle T_\ph R.h ,T_\ph R.k\rangle_{L^2}= \int_\R X_1'(x)X_2'(x)\,dx =  \langle h,k\rangle_{\dot H^1}.\quad\qedhere
\end{equation*}
\end{proof}

\begin{cor*}
\label{geod_dist}
Given $\ph_0,\ph_1\in \Diff_\AA(\R)$ the geodesic $\ph(t,x)$ connecting them is given by
\begin{align}\label{eq:geod}
\ph(t,x)=R\i\Big((1-t)R(\ph_0)+tR(\ph_1) \Big)(x) \,
\end{align}
and their geodesic distance is
\begin{align}
d(\ph_0,\ph_1)^2=4\int_{\R} \big((\ph'_1)^{1/2}-(\ph'_0)^{1/2}\big)^2\;dx .
\end{align}

Furthermore, the support of the geodesic is localized in the following sense: if $\ph(t,x)= x + 
f(t,x)$ with $f(t) \in \AA(\R)$ and similarly for $\ph_0,\ph_1$, then $\on{supp}(\p_x f(t))$ is 
contained in $\on{supp}(\p_x f_0)\cup \on{supp}(\p_x f_1)$. 
\end{cor*}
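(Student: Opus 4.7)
The plan is to deduce everything from the fact, just established, that $R : (\Diff_\AA(\R), \dot H^1) \to (\A(\R,\R_{>-2}), L^2)$ is an isometric diffeomorphism onto an open subset of the flat pre-Hilbert space $\A(\R)$. In a flat $L^2$ geometry, geodesics are straight lines, so the geodesic connecting $R(\ph_0)$ and $R(\ph_1)$ in the image is simply the affine segment $t \mapsto (1-t)R(\ph_0) + tR(\ph_1)$. Pulling back by $R^{-1}$ then yields formula \eqref{eq:geod}. The only genuine verification needed here is that this segment actually lies in the image $\A(\R,\R_{>-2})$: this is immediate because the condition $\ga(x) > -2$ defines a convex subset of $\A(\R)$, so a convex combination of two elements of $\A(\R,\R_{>-2})$ is again in $\A(\R,\R_{>-2})$.

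For the distance formula, I would just invoke the isometry once more: since the $L^2$ distance between two points in a flat space is the norm of their difference,
\[
d(\ph_0,\ph_1)^2 = \|R(\ph_1)-R(\ph_0)\|_{L^2}^2 = \int_\R \bigl(2(\ph_1')^{1/2} - 2(\ph_0')^{1/2}\bigr)^2 dx,
\]
which is exactly the claimed formula.

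For the support statement, write $\ph_i = \on{Id} + f_i$ and $\ga_i := R(\ph_i) = 2((1+f_i')^{1/2}-1)$, and set $\ga_t := (1-t)\ga_0 + t\ga_1$. The key observation is that $\ga_i(x) = 0$ precisely when $f_i'(x) = 0$; hence on the complement of $\on{supp}(\p_x f_0) \cup \on{supp}(\p_x f_1)$ both $\ga_0$ and $\ga_1$ vanish, and therefore so does $\ga_t$ for every $t \in [0,1]$. Applying the inverse formula $R^{-1}(\ga_t)(x) = x + \tfrac14 \int_{-\infty}^x (\ga_t^2 + 4\ga_t)\,dy$ gives $\p_x f(t,x) = \tfrac14(\ga_t^2 + 4\ga_t)(x)$, which vanishes wherever $\ga_t(x) = 0$. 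This yields the claimed support containment.

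The proof is thus essentially a bookkeeping exercise once the isometric embedding of the previous theorem is in hand; no step presents a real obstacle. The mildest subtlety is the convexity check ensuring the straight line stays in the image, but this is immediate from the pointwise nature of the constraint $\ga > -2$.
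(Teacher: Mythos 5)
Your proposal is correct and follows essentially the same route as the paper: the geodesic formula and distance are read off from the isometry onto the flat $L^2$ space, and the support claim is obtained from the inversion formula $\p_x f(t,x)=\tfrac14\ga_t(\ga_t+4)$ together with the observation that $\ga_i(x)=0$ exactly where $f_i'(x)=0$. Your explicit convexity check that the affine segment stays in $\A(\R,\R_{>-2})$ is a point the paper leaves implicit here (deferring it to the subsequent lemma on geodesic convexity), and it is a worthwhile addition.
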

\begin{proof}
The formula for the geodesic $\ph(t,x)$ is clear. The geodesic distance between $\ph_0,\ph_1$ is given as the $L^2$ difference  
between their $R$-maps:
\begin{align*}
d^{\Diff}(\ph_0,\ph_1)&=d^{\A}(R(\ph_0),R(\ph_1))= 
\int_0^1 \sqrt{\int_\R \big(R(\ph_1)-R(\ph_0)\big)^2\;dx}\;dt
\\&= 2 \sqrt{\int_\R \big((\ph'_1)^{1/2}-(\ph'_0)^{1/2}\big)^2\;dx}.
\end{align*} 

To prove the statement regarding the support of the geodesic, we use the inversion formula of $R$ to obtain
$$f'(t,x) = \ph'(t,x)-1=\frac14 \ga(t,x)(\ga(t,x)+4),$$
where $\ga(t,x) = (1-t)R(\ph_0)(x) + tR(\ph_1)(x)$ is the image of the geodesic under the $R$-map. Next we note that at the points where $f_i'(x) = 0$ we have $\ph'_i(x) = 1$ and $R(\ph_i)(x)=0$. Hence, at the points where both $f_0'(x)=f_1'(x)=0$ we also have $f'(t,x)=0$ for all $t\in[0,1]$.
\end{proof}
\begin{figure}
\begin{center}
\includegraphics[width=.49\textwidth]{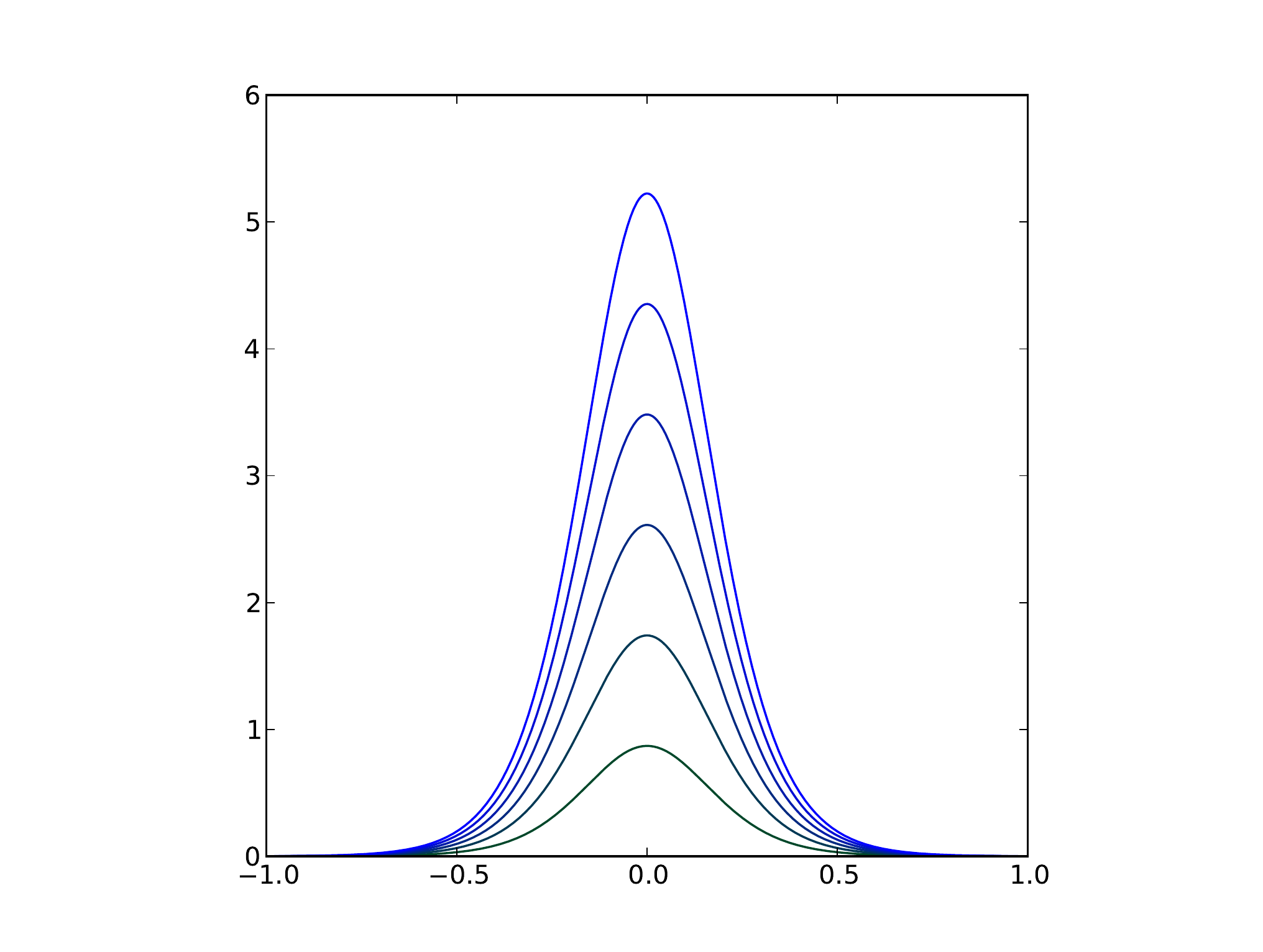}\hspace{-1cm}
\includegraphics[width=.49\textwidth]{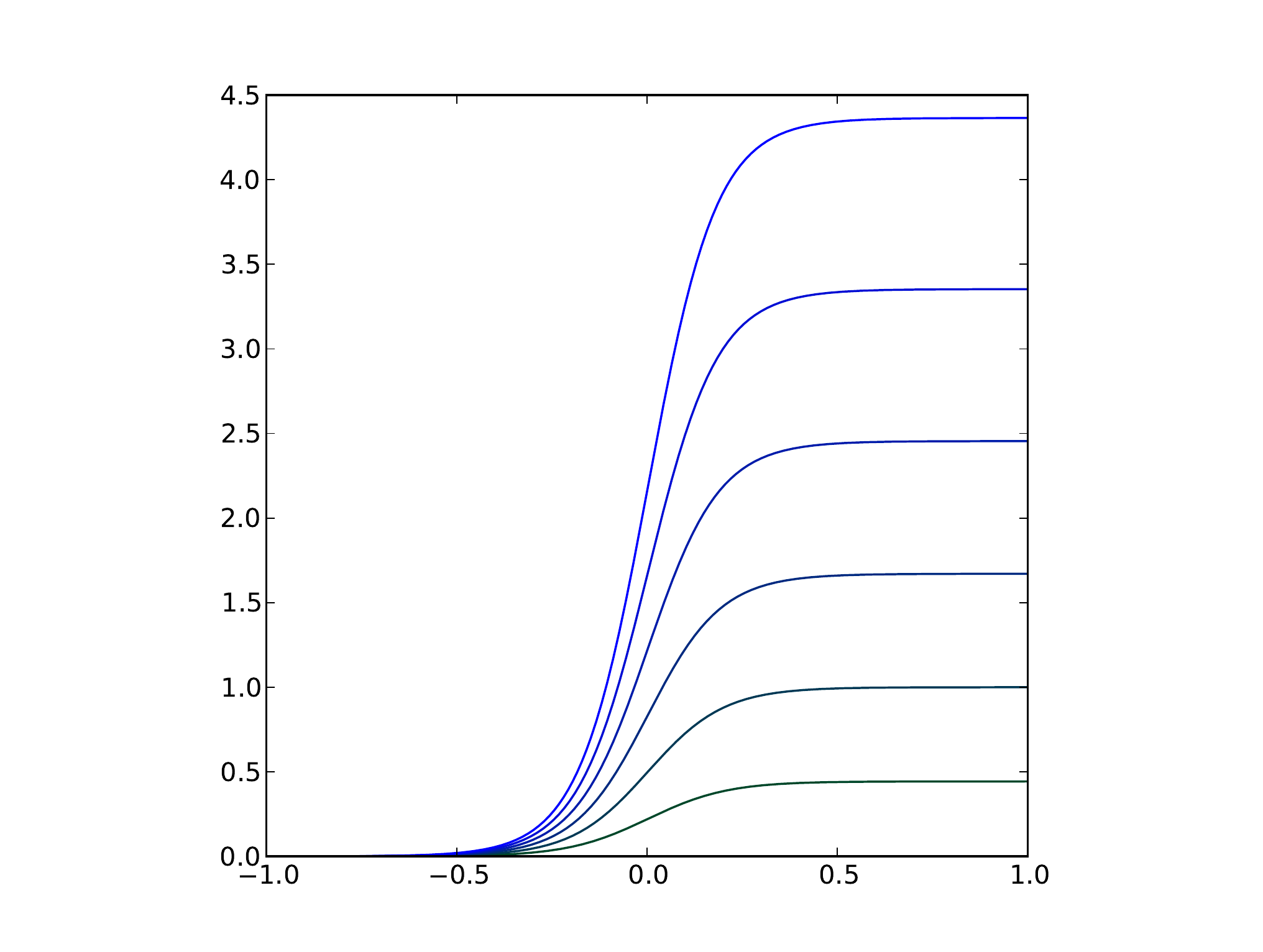}
\end{center}
\caption{A complete geodesic $\ph(t,x)\in\Diff_{\AA}(\R)$ sampled at time points 
$t=0,\tfrac12,1,\tfrac32,2,\tfrac52,3$. Left image: The geodesic in the $R$-map space. Right image: 
The geodesic in the original space, visualized as $\ph(t,x)-x$.}  
\label{fig:geod1}
\end{figure}

\subsection*{Example}
A geodesic connecting the identity to the diffeomorphism $\ph_1(x)= x + e^{-1/(x + 1)^2 }e^{- 1/(x - 1)^2}$
can be seen in Fig.~\ref{fig:geod1}.
In all the examples presented in this article, we  consider diffeomorphisms $\ph$ with $\on{supp}(\ph'-1)\subset[-1,1]$. 
We approximated the diffeomorphisms with $1000$ points on this interval. 
In the following lemma it is shown
that this behaviour does not hold in general.

\begin{lem*}
The metric space $\big(\Diff_{\AA}(\R),\dot H^1\big)$ is path-connected and geodesically convex but 
not geodesically complete.  

In particular, for every $\ph_0 \in \on{Diff}_\AA(\R)$ and $h \in T_{\ph_0}\on{Diff}_\AA(\R)$, 
$h\neq 0$ there exists a time $T\in\R$ such that $\ph(t,\cdot)$ is a geodesic for $|t|<|T|$ 
starting at $\ph_0$ with $\ph_t(0) = h$, but $\ph_x(T,x) = 0$ for some $x \in \R$.  
\end{lem*}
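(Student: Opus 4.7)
The natural approach is to transport everything through the $R$-map isometry established in Section \ref{RmapDiffAA}: since
$R:(\Diff_\AA(\R),\dot H^1)\to(\A(\R,\R_{>-2}),L^2)$ is an isometric diffeomorphism onto an open subset of the flat pre-Hilbert space $(\A(\R),L^2)$, geodesics in $\Diff_\AA(\R)$ correspond to straight line segments $\ga(t)=\ga_0+tv$ in $\A(\R)$ that remain inside $\A(\R,\R_{>-2})$. All three assertions then reduce to convexity-and-escape statements about straight lines in the convex open set $\A(\R,\R_{>-2})$.

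\textbf{Path-connectedness and geodesic convexity.} Given $\ph_0,\ph_1\in\Diff_\AA(\R)$, set $\ga_i=R(\ph_i)$ and consider the segment $\ga(t)=(1-t)\ga_0+t\ga_1$ for $t\in[0,1]$. Pointwise, $\ga(t)(x)$ is a convex combination of $\ga_0(x),\ga_1(x)>-2$, hence $\ga(t)(x)>-2$, so $\ga(t)\in\A(\R,\R_{>-2})$ throughout. Applying $R^{-1}$ yields a path in $\Diff_\AA(\R)$; by the isometry this is a length-minimizing $\dot H^1$-geodesic (compare Corollary \ref{geod_dist}).

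\textbf{Incompleteness and the "in particular" statement.} Fix $\ph_0\in\Diff_\AA(\R)$ and $0\neq h\in T_{\ph_0}\Diff_\AA(\R)$; write $h=X\circ\ph_0$ with $X\in\AA(\R)$. Using the tangent map computation from Section \ref{RmapDiffAA},
\[
v:=T_{\ph_0}R\cdot h=(\ph_0')^{1/2}(X'\circ\ph_0)\in\A(\R),
\]
and $v\not\equiv 0$ because $R$ is a diffeomorphism. The unique (maximal) geodesic with these initial data is
\[
\ph(t,\cdot)=R^{-1}\big(\ga_0+tv\big),\qquad \ga_0:=R(\ph_0),
\]
which is defined precisely on the open interval of $t$ for which $\ga_0+tv>-2$ pointwise on $\R$. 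I then define
\[
T_+=\sup\{t>0:\ga_0+sv>-2\text{ on }\R\ \forall s\in[0,t)\},\qquad T_-=\sup\{t>0:\ga_0-sv>-2\text{ on }\R\ \forall s\in[0,t)\}.
\]
Because $v\in\A(\R)$ is smooth and vanishes at $\pm\infty$ (in each of the three function-space choices), both $\sup v$ and $\inf v$ are attained; since $v\neq 0$, at least one of them has strict sign. Say $\sup v=v(x_0)>0$: then $\ga_0(x_0)+tv(x_0)\to-\infty$ as $t\to-\infty$, forcing $T_-<\infty$. (The symmetric argument with $\inf v<0$ yields $T_+<\infty$.) Set $T=-T_-$ (resp.\ $T=T_+$); at this time the continuous function $\ga_0+Tv$ attains the value $-2$ at some $x\in\R$, while it is strictly $>-2$ for $|t|<|T|$. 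Via the inversion formula $\ph'-1=\tfrac14\ga(\ga+4)$ from Corollary \ref{geod_dist}, this translates precisely to $\ph_x(T,x)=0$ while $\ph(t,\cdot)\in\Diff_\AA(\R)$ for $|t|<|T|$, proving both incompleteness and the quantitative statement.

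\textbf{Expected obstacle.} There is no serious technical difficulty here: once the $R$-map is in hand, everything reduces to elementary convex-geometry of the open half-space $\{\ga>-2\}$ in $\A(\R)$. The only points requiring a line of justification are (i) verifying $v\in\A(\R)$ in each of the three function-space instantiations (immediate from the module structure of $\A(\R)$ over $\mathcal B(\R)$ and the fact that composition with $\ph_0$ preserves $\A(\R)$), and (ii) noting that ``$\ga(t)>-2$ pointwise'' on the maximal interval is equivalent to ``$\ga(t)\in\A(\R,\R_{>-2})$'' --- which follows because $\ga(t)$, lying in $\A(\R)$, vanishes at $\pm\infty$ and therefore attains its infimum, so strict pointwise positivity of $\ga(t)+2$ forces a uniform positive lower bound.
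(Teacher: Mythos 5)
Your overall strategy is exactly the paper's: push everything through the $R$-map, so that path-connectedness and geodesic convexity follow from convexity of the open set $\A(\R,\R_{>-2})$, and incompleteness reduces to showing that the line $\ga_0+tv$ exits that set at a finite time which is actually attained at some point $x$. The first two assertions and the reduction are fine, as is your observation that attainment of the exit value $-2$ follows because $\ga_0$ and $v$ vanish at $\pm\infty$.

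There is, however, a slip in how you choose $T$. You set $T=-T_-$ when $\sup v>0$ and $T=T_+$ when $\inf v<0$, but the lemma requires the geodesic to exist for \emph{all} $|t|<|T|$, i.e.\ you need $|T|=\min(T_+,T_-)$ with the sign chosen so that the boundary is actually reached at time $T$. If $v$ takes both signs (which is generic) and, say, $T_+<T_-$ while you have chosen $T=-T_-$, then the segment $\ga_0+tv$ already leaves $\A(\R,\R_{>-2})$ at $t=T_+<|T|$, and your claim ``it is strictly $>-2$ for $|t|<|T|$'' is false; the sign of $\sup v$ or $\inf v$ carries no information about which one-sided exit time is smaller. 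The repair is easy (take whichever of $T_+$, $-T_-$ has smaller absolute value, and rerun your attainment argument at that time), but as written the step fails. The paper avoids this bookkeeping altogether with a neat device: it minimizes $g(x)=|2+\ga_0(x)|/|k(x)|$ (set to $\infty$ where $k=0$), which tends to $\infty$ as $|x|\to\infty$ and hence attains its minimum at some $x_0$; putting $T=-(2+\ga_0(x_0))/k(x_0)$ then automatically gives $|T|=\min(T_+,T_-)$, the correct sign, and the point $x_0$ where $\ga(T,\cdot)=-2$, all in one stroke. You may want to adopt that formulation or explicitly take the minimum of the two exit times.
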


\begin{proof}
Set $\ga_0 = R(\ph_0)$ and $k = T_{\ph_0} R.h$. Then $R$ maps the geodesic $\ph(t)$ to
\[
R(\ph(t))(x) = \ga(t,x) = \ga_0(x) + t k(x)
\]
and the geodesic $\ph(t)$ ceases to exist when $\ga(t)$ leaves the image of the $R$-map, i.e. 
when $\ga(t,x) = -2$ for some pair $(t,x)$. Consider the function $g(x)= |2 + \ga_0(x)| / |k(x)|$ 
and set $g(x) = \infty$ where $k(x)=0$. Since we assumed $h\neq 0$, there is at least one $x\in\R$, 
such that $g(x)$ is finite. Since $k(x) \to 0$ as $|x| \to \infty$ we have $g(x) \to \infty$ for 
$x$ large, and $g$ attains the minimum at some point. Let this point be $x_0$ and define 
$T=-(2+\ga_0(x_0)) / k(x_0)$. Then $|T|$ is the time when $\ga(t,x)$ first reaches $-2$. So for 
$|t| < |T|$ the geodesic $\ga(t)$ lies in $\A(\R,\R_{>-2})$ and $\ga(T, x_0)=-2$. Then we have      
\[
\ph_x(T,x_0)=1+\frac14 \ga(T,x_0) (\ga(T,x_0)+4) = 1-1=0,
\]
as required. This proves that the space is not geodesically complete. 

The statement regarding path connectedness and geodesic convexity are direct consequences of the 
path connectedness and  convexity of $\A(\R,\R_{>-2})$. 
\end{proof}
\begin{figure}
\begin{center}
\includegraphics[width=.49\textwidth]{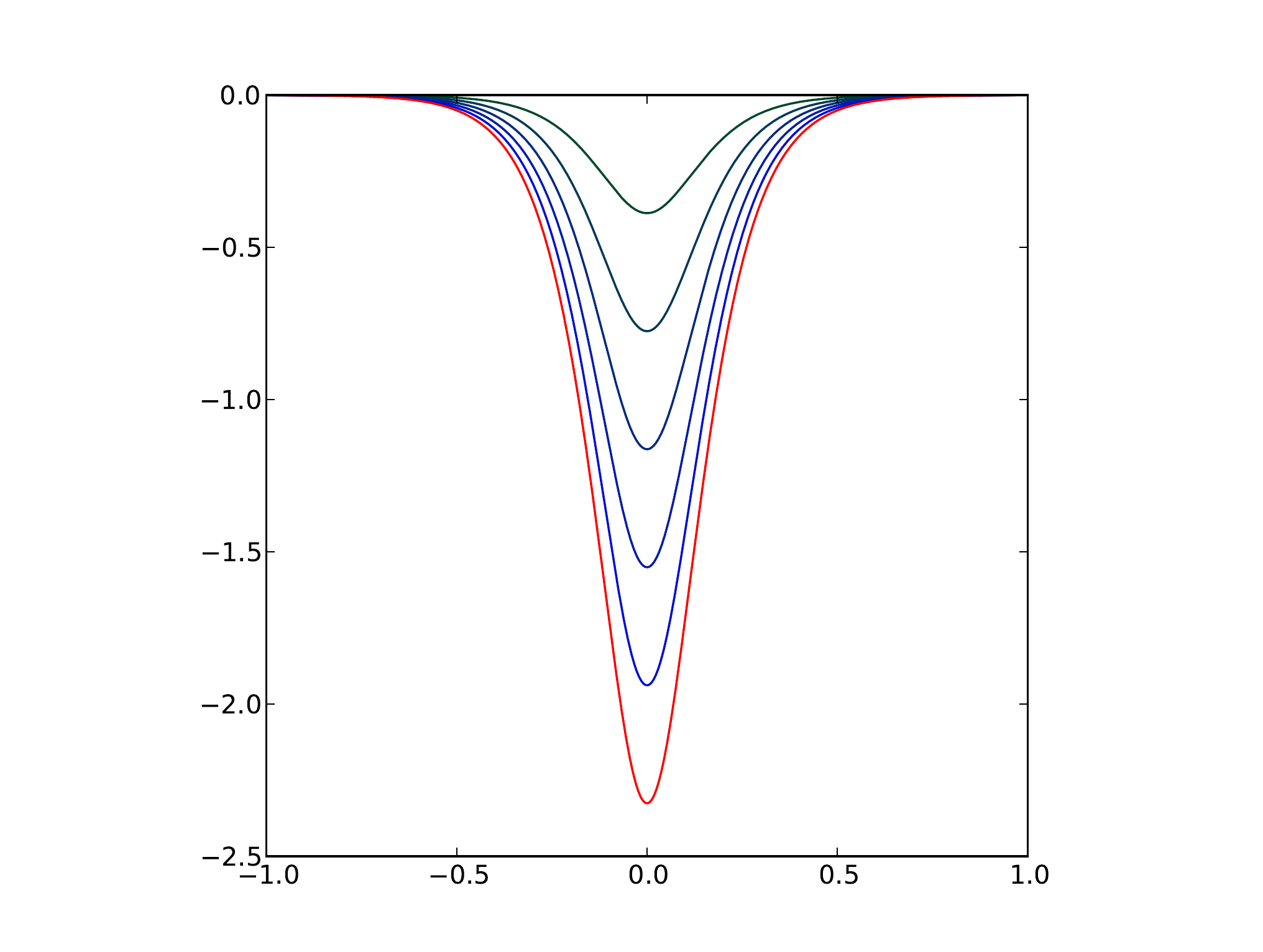}\hspace{-1cm}
\includegraphics[width=.49\textwidth]{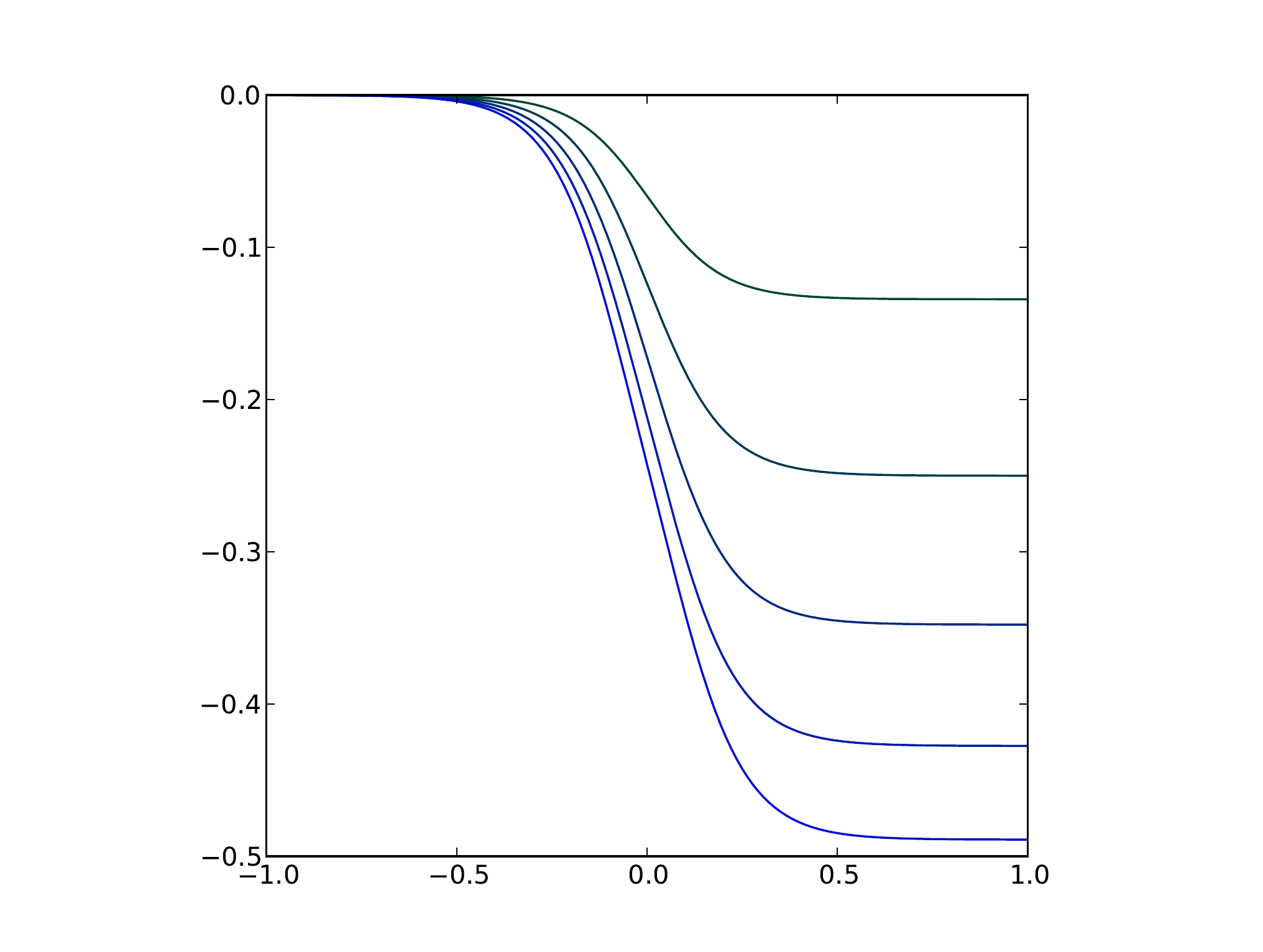}
\end{center}
\caption{An incomplete geodesic $\ph(t,x)\in\Diff_{\AA}(\R)$. Left image: The geodesic in $R$-map space at time points $t=0,\tfrac12,1,\tfrac32,2,\tfrac52,3$. At time $t=3$ (red line) the geodesic has already left the space of $R$-maps.
Right image: The geodesic in original space visualized as $\ph(t,x)-x$ sampled at time points $t=0,\tfrac12,1,\tfrac32,2,\tfrac52$. }
\label{fig:incomplete}
\end{figure}

\subsection*{Example}
This behaviour is illustrated in Fig.~\ref{fig:incomplete}, where we have again chosen $\ph_0=\on{Id}$ and we have solved the geodesic equation in the direction $$h(x)= R\i\left(\frac{-1}{4+4e^{-10x}}\right)$$
until the geodesic leaves the space of diffeomorphisms --- which happens approximately at time $t=2.58$.

\subsection{The Square-Root Representation on $\Diff_{\A}(\R)$.}
We will now study the homogeneous $H^1$-metric on diffeomorphism groups which do not allow a shift towards infinity. We can still use the same square-root representation 
as in the previous section, but now the image of this map will be a splitting submanifold in the image space $\A(\R,\R_{>-2})$.

\begin{thm*}
The square-root representation on the diffeomorphism group $\Diff_{\A}(\R)$ is a bijective mapping given by
$$ R:\left\{
\begin{aligned}
 \Diff_{\A}(\mathbb R)&\to \big(\on{Im}(R),\|\cdot\|_{L^2}\big)\subset
\big(\A\big(\R,\mathbb R_{>-2}\big),\|\cdot\|_{L^2}\big)\\
\ph &\mapsto
2\;\big((\ph')^{1/2}-1\big) .
\end{aligned}\right.
$$
The pullback of the restriction of the flat $L^2$-metric to $\on{Im}(R)$ via $R$ is again the homogeneous Sobolev metric of order one.
The image of the $R$-map is the splitting submanifold (in the sense of  \cite[Sect. 
27.11]{Michor1997}) of $\A(\mathbb R,\mathbb R_{>-2})$ given by:
$$\on{Im}(R) = \Big\{\ga \in \A(\R,\mathbb R_{>-2}):F(\ga):=\int_{\R}\ga\big(\ga+4\big)\; dx = 0 \Big\} .$$
\end{thm*}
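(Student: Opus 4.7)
The starting point is that the formula defining $R$ and its inverse are identical to those in the previous theorem, so the arguments establishing that $R(\ph)\in\A(\R,\R_{>-2})$ whenever $\ph=\on{Id}+f$ with $f\in\A(\R)$, and that the tangent map is $T_\ph R.h = (\ph')^{1/2}(X'\o\ph)$ for $h=X\o\ph$, carry over verbatim: the Taylor expansion argument used for $\AA(\R)$ only required $f'\in\A(\R)$ with $f'>-1+\ep$, which is still true here, and each $\A(\R)$ is a $\mathcal B(\R)$-module. In particular, the pullback identity $R^\ast\langle\cdot,\cdot\rangle_{L^2}=\langle\cdot,\cdot\rangle_{\dot H^1}$ is immediate from the same computation.

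The core new point is the description of the image. Note first that $\A(\R)=\{f\in\AA(\R):f(\infty)=0\}$, so $\Diff_\A(\R)$ sits inside $\Diff_\AA(\R)$ as those $\ph=\on{Id}+f$ satisfying the single additional scalar condition $\int_\R f'(x)\,dx=0$. Using the inversion formula $f'(x)=\tfrac14\ga(x)(\ga(x)+4)$ with $\ga=R(\ph)$, this condition translates exactly into $F(\ga)=\int_\R\ga(\ga+4)\,dx=0$. Conversely, if $\ga\in\A(\R,\R_{>-2})$ satisfies $F(\ga)=0$, then $R^{-1}(\ga)=\on{Id}+f$ with $f(x)=\tfrac14\int_{-\infty}^x(\ga^2+4\ga)\,dy$, and the vanishing of the total integral ensures $f(\infty)=0$, while the fact that the integrand lies in $\A(\R)$ ensures $f\in\A(\R)$. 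Hence $R$ restricts to a bijection between $\Diff_\A(\R)$ and $\{F=0\}$.

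It remains to check that $\on{Im}(R)=F^{-1}(0)$ is a splitting submanifold of $\A(\R,\R_{>-2})$ in the sense of \cite[Sect.~27.11]{Michor1997}. The functional $F:\A(\R,\R_{>-2})\to\R$ is smooth (even a restriction of a continuous quadratic form), and its derivative at $\ga$ is
\[
dF(\ga)(h)=\int_\R(2\ga+4)h\,dx = 2\int_\R(\ga+2)h\,dx.
\]
Since $\ga+2>0$ everywhere and $\ga+2$ is continuous with positive values (locally bounded below by a positive constant on compact sets, and in the $C^\infty_c$ case one can test against bump functions), $dF(\ga)$ is a nontrivial continuous linear functional $\A(\R)\to\R$. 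Because the target is finite dimensional, $dF(\ga)$ admits a bounded linear section: choose any $h_0\in\A(\R)$ with $dF(\ga)(h_0)\ne 0$ and set $s(c)=c\,h_0/dF(\ga)(h_0)$. This exhibits a direct sum decomposition $\A(\R)=\ker dF(\ga)\oplus\R\cdot h_0$ into closed subspaces, so $F$ is a submersion in the convenient-calculus sense. Applying the implicit function theorem in this setting yields that $F^{-1}(0)$ is a splitting submanifold with tangent space $\ker dF(\ga)$ at each $\ga$.

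The only genuine technicality is the submersion/implicit-function step in the (LF), Fr\'echet, and $W^{\infty,1}$ settings; this is where care is needed, but because the constraint is scalar and the cokernel is simply $\R$, it reduces to producing the bounded section above, which is elementary once $dF(\ga)\ne 0$ is established.
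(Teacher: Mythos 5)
Your identification of the image and the pullback computation follow the paper exactly: the paper likewise reduces membership in $\Diff_{\A}(\R)$ to the scalar condition $\int_\R(\ph'-1)\,dx=0$ and translates it via $\tfrac14(\ga+2)^2-1=g'$ into $F(\ga)=0$. The divergence, and the genuine gap, is in the final step. You establish that $dF(\ga)=2\int_\R(\ga+2)(\cdot)\,dx$ is a nonzero bounded functional whose kernel splits off a one-dimensional complement, and then invoke ``the implicit function theorem in this setting'' to conclude that $F^{-1}(0)$ is a splitting submanifold. But the spaces here are Fr\'echet (or (LF)) and not Banach, and in the convenient calculus of \cite{Kriegl1997} there is no general implicit/inverse function theorem: the existence of a bounded linear section of $dF(\ga)$ is exactly the hypothesis that makes the Banach-space proof run, and it is well known that the conclusion fails beyond Banach spaces (there are smooth maps between Fr\'echet spaces with everywhere invertible derivative that are not local diffeomorphisms). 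So the sentence ``it reduces to producing the bounded section above'' is where the argument breaks; producing that section does not by itself give you a straightening chart, which is what the definition of splitting submanifold in \cite[Sect.~27.11]{Kriegl1997} requires.

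The paper avoids this entirely by exhibiting the chart explicitly and globally: the map $f\mapsto 2((f+1)^{1/2}-1)$ is a diffeomorphism from $\A(\R,\R_{>-1})$ onto $\A(\R,\R_{>-2})$ (with explicit inverse $\ga\mapsto\tfrac14\ga(\ga+4)$), and it carries the $c^\infty$-open subset of the closed splitting hyperplane $\{f:\int_\R f\,dx=0\}$ precisely onto $\on{Im}(R)$. That is a direct verification of the splitting-submanifold property with no recourse to any implicit function theorem. Your argument could be repaired along similar lines --- for instance by solving the quadratic equation $F(k+t\,h_0)=s$ for $t$ explicitly by the quadratic formula, as the paper does in a related context when constructing the paths $\wt\ga_n$ in the proof that $d^{\A}=d^{\AA}$ --- but some such explicit device is needed; the abstract submersion argument does not close on its own.
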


\begin{proof}
The statement regarding the image of $R$, follows from the fact that a diffeomorphism 
$\ph=\on{Id}+g\in\Diff_{\AA}(\R)$  is also an element of $\Diff_{\A}(R)$
iff 
$$\int_\R (\ph'(x)-1)\;dx=\int_\R g'(x) \;dx =0.$$
Using that for $\ga=R(\ph)$ we have $\frac14(\ga(x)+2)^2-1=g'(x)$ we obtain the desired result.
The mapping 
$\A(\mathbb R,\mathbb R_{>-2})\ni f\mapsto 2((f+1)^{1/2}-1)\in \A(\mathbb R,\mathbb R_{>-2})$
is a diffeomorphism, and it maps $\A(\mathbb R,\mathbb R_{>-1})\cap \{f:\int f\,dx =0\}$ 
diffeomorphically onto $\on{Im}(R)$, which is therefore a splitting submanifold.
\end{proof}

\subsection*{Remark}
Note that we have $dF(\ga)(\de)= \int_{\mathbb R} (2\ga+4).\de \,dx$, but $2\ga+4$ is not in 
$\A$, only in the dual $\A^\ast$. So the so-called normal field along the codimension~1 submanifold 
$\on{Im}(\mathbb R)$ has no length; methods like the Gauss formula or the Gauss equation do not 
make sense. This is a diffeomorphic translation of the nonexistence of the geodesic equation in 
$\Diff_{\A}(\mathbb R)$.

\subsection{Geodesic Distance on $\on{Diff}_\A(\R)$} We have seen that on the space $\on{Diff}_\A(\R)$ the geodesic equation does not exist. It is, however, possible to define the geodesic distance between two diffeomorphisms $\ph_0, \ph_1 \in \on{Diff}_\A(\R)$ as the infimum over all paths $\ph:[0,1] \to \on{Diff}_\A(\R)$ connecting these:
\[
d^\A(\ph_0,\ph_1) = \inf_{\substack{\ph(0)=\ph_0 \\ \ph(1)=\ph_1}} \int_0^1 \sqrt{ G_{\ph(t)}(\p_t \ph(t), \p_t \ph(t))}\; dt.
\]
In Corollary \ref{geod_dist} we gave an explicit formula for the geodesic distance $d^\AA$ on the space $\on{Diff}_\AA(\R)$. It turns out that the geodesic distance $d^\A$ on $\on{Diff}_\A(\R)$ is the same as the restriction of $d^\AA$ to $\on{Diff}_\A(\R)$. Note that this is not a trivial statement, since in the definition of $d^\A$ the infimum is taken over all paths lying in the smaller space $\on{Diff}_\A(\R)$.

\begin{thm*}
The geodesic distance $d^\A$ on $\on{Diff}_\A(\R)$  coincides with the restriction of $d^\AA$ to $\on{Diff}_\A(\R)$, i.e. for $\ph_0, \ph_1 \in \on{Diff}_\A(\R)$ we have
\[
d^\A(\ph_0, \ph_1) = d^\AA(\ph_0, \ph_1).
\]
\end{thm*}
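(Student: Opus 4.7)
\emph{Approach.} The inequality $d^\A(\ph_0,\ph_1)\ge d^\AA(\ph_0,\ph_1)$ is immediate, since every path in $\Diff_\A(\R)$ is also a path in $\Diff_\AA(\R)$ with the same $\dot H^1$-length, so restricting the infimum to a smaller class of curves only enlarges it. The content of the theorem is the reverse inequality, which I would prove in the $R$-picture of the previous subsection: $R(\Diff_\AA(\R))=\A(\R,\R_{>-2})$ is flat and $R(\Diff_\A(\R))$ is the codimension-one splitting submanifold $\{F=0\}$ with $F(\ga)=\int_\R \ga(\ga+4)\,dx$. Setting $\ga_i=R(\ph_i)$, the straight line $\ga(t)=(1-t)\ga_0+t\ga_1$ has length $\|\ga_1-\ga_0\|_{L^2}=d^\AA(\ph_0,\ph_1)$ but in general leaves $\{F=0\}$. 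The idea is to exploit the observation (made in the remark after the theorem of the previous subsection) that the ``normal field'' $2\ga+4$ is not an element of $\A$: one can correct $F$ back to zero by perturbations in $\A$ whose length contribution is arbitrarily small.

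\emph{Construction.} Pick a bump $b_W\in C_c^\infty(\R)\subseteq\A(\R)$ supported in $[W,2W]$ and close to $1$ on most of that interval, so that $\int_\R b_W\,dx\sim W$ and $\|b_W\|_{L^2}^2\sim W$. Define $\eta(t,x)=\ga(t,x)+\la(t)\,b_W(x)$, where $\la(t)$ is the root of smaller absolute value of
\[
\|b_W\|_{L^2}^2\,\la^2+\Bigl(2\!\int\!\ga(t)b_W\,dx+4\!\int\! b_W\,dx\Bigr)\la+F(\ga(t))=0.
\]
At $t=0,1$ the constant term vanishes, so this small root is $\la(0)=\la(1)=0$. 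Thus $\eta$ connects $\ga_0$ to $\ga_1$ inside $\A(\R)$ with $F(\eta(t))\equiv 0$, provided it also stays in $\A(\R,\R_{>-2})$; the latter is ensured for $W$ large by the smallness of $\|\la(t)\,b_W\|_{L^\infty}$ combined with the uniform lower bound $\ga(t,\cdot)>-2+\ep$, so $\eta$ is a path in $R(\Diff_\A(\R))$.

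\emph{Estimates and obstacle.} Since $\A\subseteq C_0(\R)$ in every case (using the Sobolev embedding in the $W^{\infty,1}$-case), $\|\ga(t)\|_{L^\infty([W,2W])}\to 0$ uniformly in $t\in[0,1]$, so the linear coefficient in the quadratic equals $4W\bigl(1+o(1)\bigr)$ as $W\to\infty$. The small root therefore satisfies $\la(t)=O(W^{-1})$ and, by implicit differentiation in $t$, $\la'(t)=O(W^{-1})$, uniformly in $t\in[0,1]$. Combined with $\|b_W\|_{L^2}\sim\sqrt{W}$ this yields
\[
\int_0^1\|\la'(t)b_W\|_{L^2}\,dt=O\bigl(W^{-1/2}\bigr),
\]
so by the triangle inequality for the $L^2$-norm the length of $R\i\circ\eta$ in $\Diff_\A(\R)$ is at most $\|\ga_1-\ga_0\|_{L^2}+O(W^{-1/2})$. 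Letting $W\to\infty$ gives $d^\A(\ph_0,\ph_1)\le d^\AA(\ph_0,\ph_1)$, completing the proof. The main obstacle is the uniformity of the implicit function argument: one must verify that the linear coefficient in the quadratic is bounded below by a definite positive multiple of $W$ simultaneously for all $t\in[0,1]$ and that the discriminant stays nonnegative there, both of which reduce to the uniform decay of $\ga_0,\ga_1$ at $\pm\infty$.
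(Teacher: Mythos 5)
Your proposal is correct and follows essentially the same route as the paper: the easy inequality $d^\AA\le d^\A$, then passage to the $R$-picture and correction of the functional $F$ by a wide, low bump pushed toward $+\infty$ whose integral is $O(1)$ but whose $L^2$-norm (and that of its $t$-derivative) is $O(W^{-1/2})$. The only cosmetic difference is that you perturb the minimizing straight line while the paper perturbs an arbitrary path, and your width parameter $W$ plays the role of the paper's $\ep_n^{-1}$.
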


\begin{proof}
Because the space $\on{Diff}_{\A}(\R)$ is smaller than $\on{Diff}_{\AA}(\R)$, we have the inequality $d_{\AA} \leq d_{\A}$. The following argument will establish the other inequality $d_{\A} \leq d_{\AA}$ and hence with both of these inequalities together we will be able to conclude that $d_{\AA} = d_{\A}$.

Take $\ph_0, \ph_1 \in \on{Diff}_\A(\R)$ and let $\ph(t)$ be a path connecting them in the larger space $\on{Diff}_\AA(\R)$ . Then $\ga(t) = R(\ph(t))$ is a path in $\A(\R, \R_{>-2})$ and its length is measured by
\[
L(\ga) = \int_0^1 \sqrt{\int_{\R} \p_t \ga(t,x)^2 dx} dt\,.
\]
We also have the functional
\[
F(\ga) = \int_{\R} \ga(x)^2 + 4\ga(x) dx\,,
\]
which measures whether $\ga$ lies in the image of $\on{Diff}_{\A}(\R)$ under the $R$-map. 

We will construct a sequence $\wt \ga_n$ of paths with $n\to \infty$, such that these paths satisfy $F(\wt\ga_n)=0$ and $L(\wt \ga_n) \to L(\wt\ga)$. This will show that each path in $\on{Diff}_\AA(\R)$ can be approximated arbitraily well by path in the smaller space $\on{Diff}_\A(\R)$ and hence we will have established the other inequality $d_{\A} \leq d_{\AA}$.

Since $\ga(t)$ and $\p_t \ga(t)$ decay to 0 as $x \to \infty$ there exists for each $n > 0$ some $x_n \in \R$ such that
\[
\begin{array}{c}
|\ga(t,x)| < \frac 1n \\
|\p_t\ga(t,x)| < \frac 1n
\end{array}\qquad
 x>x_n\,.
\]
We also define $\ep_n = \frac 1n$. Let $\ps:\R\to\R$ be a smooth function with support in $[-1,1]$, $\ps(x) \geq 0$ and $\int \ps(x) dx = 1$ as well as $\int \ps(x)^2 dx = 1$. Now define the new path
\[
\wt \ga_n(t,x) = \ga(t,x) + \al_n(t) \ep_n \ps\left(\ep_n(x-x_n - \ep_n\i)\right)
\]
with the function $\al_n(t)$ determined by $F(\wt\ga_n(t))=0$. To make the calculations easier, we set $\ps_n(x) = \ps\left(\ep_n(x-x_n - \ep_n\i)\right)$ and we note that
\begin{align*}
\ep_n \int_\R \ps_n(x) dx &= 1, &
\ep_n \int_\R \ps_n(x)^2 dx &= 1.
\end{align*}
Writing the condition $F(\wt \ga_n(t))=0$ more explicitly we obtain
\begin{align*}
F(\wt \ga_n(t)) &= \int_\R \ga(t)^2 + 2 \al_n(t) \ep_n \ps_n \ga(t) + \al_n(t)^2 \ep_n^2 \ps_n^2  + 4 \left(\ga(t) + \al_n(t) \ep_n \ps_n\right) dx \\
&= \ep_n \al_n(t)^2 + \left( 4 + 2 \ep_n \int_\R \ps_n \ga(t) dx\right) \al_n(t) + F(\ga(t))\,.
\end{align*}
We need to estimate the integral 
\[
C_n(t) := \ep_n \int_\R \ps_n(x) \ga(t, x) dx
\]
to be able to control $\al(t)$. We have
\begin{align*}
|C_n(t)| &= \left| \ep_n \int_\R \ps\left(\ep_n(x-x_n - \ep_n\i)\right) \ga(t, x) dx \right| \\
&\leq \ep_n \int_{x_n}^{x_n + 2\ep_n\i} \ps\left(\ep_n(x-x_n - \ep_n\i)\right) \left| \ga(t,x) \right| dx \\
&\leq \ep_n \int_{x_n}^{x_n + 2\ep_n\i} \ps\left(\ep_n(x-x_n - \ep_n\i)\right) \frac 1ndx \\
& \leq \frac 1n\,.
\end{align*}
Hence we see that for large $n$ we will have $\al_n(t) \to -F(\ga(t))/4$ and the convergence is 
uniform in $t$. To see that $\al(t)$ is smooth in $t$ we can use the explicit formula 
\[
\al_n(t) = -\ep_n\i\left(2+C_n(t)\right) - \sqrt{ \ep_n^{-2} \left(2+C_n(t)\right)^2 - \ep_n\i F(\ga(t))}
\]
and note that for $\ep_n$ sufficiently small the term under the square root will always be positive.

Now we look at the length of the path $\wt\ga_n$. Let us consider only the inner integral 
$\int_\R \p_t\wt\ga_n(t,x) dx$. We will show the convergence 
\[
\int_\R \left(\p_t\wt\ga_n(t,x)\right)^2 dx \to \int_\R \left(\p_t\ga(t,x)\right)^2 dx
\]
uniformly in $t$, which will imply the convergence $L(\wt\ga_n) \to L(\ga)$. To do so, we look at
\begin{align*}
\int_\R \left(\p_t\wt\ga_n(t)\right)^2 dx 
&= \int_\R \left(\p_t\ga(t)\right)^2 + 2\p_t\al_n(t) \ep_n \ps_n \p_t \ga(t) + \left(\p_t\al_n(t)\right)^2 \ep_n^2 \ps_n(x)^2 dx \\
&= \int_\R \left(\p_t\ga(t)\right)^2 dx + 2 \p_t\al_n(t) \ep_n \int_\R \ps_n \p_t \ga(t) dx + \left(\p_t \al_n(t)\right)^2\ep_n.
\end{align*}
The integral
\[
D_n(t) := \ep_n \int_\R \ps_n(x) \p_t\ga(t, x) dx
\]
can be estimated in the same way as $C_n(t)$ previously to obtain $D_n(t) \leq \tfrac 1n$. It remains 
to bound $\p_t \al(t)$ uniformly in $t$ to show convergence. We differentiate the equation 
\[
\ep_n \al_n(t)^2 + \left( 4 + 2 \ep_n \int_\R \ps_n \ga(t) dx\right) \al_n(t) + F(\ga(t)) = 0
\]
that defines $\al(t)$, which gives us
\[
\p_t\al_n(t) \left( 2\ep_n \al_n(t) + 4 + 2C_n(t)\right) + T_{\ga(t)} F.\p_t \ga(t) = 0
\]
and thus
\[
\p_t\al_n(t) = -\frac{T_{\ga(t)} F.\p_t \ga(t)}{4 + 2\ep_n \al_n(t) + 2C_n(t)}\,.
\]
We see that $\p_t \al_n(t) \to -T_{\ga(t)} F.\p_t \ga(t)/4$ and the convergence is uniform 
in $t$. Thus we have shown that convergence $L(\wt \ga_n) \to L(\ga)$ of the length functional. 
\end{proof}

\subsection{Submanifold $\on{Diff}_\A(\R)$ Inside $\on{Diff}_\AA(\R)$}
The following theorem deals with the question how $\on{Diff}_\A(\R)$ lies inside the extension 
$\on{Diff}_\AA(\R)$. We give an upper bound for how often a geodesic in $\on{Diff}_\AA(\R)$ might 
intersect or be tangent to $\on{Diff}_\A(\R)$. It is only an upper bound because the geodesic 
might leave the group of diffeomorphisms before intersecting $\on{Diff}_\A(\R)$.   

\begin{thm*}
\label{lem_par}
Consider a geodesic $\ph(t)$ in $\on{Diff}_\AA(\R)$ starting at $\ph(0)=\ph_0$ with initial 
velocity $\p_t \ph(0) = u_0 \o \ph_0$ and denote by $u(t) = \p_t \ph(t) \o \ph(t)\i$ the 
right-trivialized velocity. Then the size of the shift at infinity is given by  
\begin{align*}
\on{Shift}(\ph(t))& = \on{Shift}(\ph_0) + t u_0(\infty) + \frac{t^2}4 \int_\R (u_0')^2 dx \\
u(t,\infty) &= u_0(\infty) + t \int_\R (u_0')^2 dx.
\end{align*}
This means that every geodesic in $\on{Diff}_\AA(\R)$ intersects $\on{Diff}_\A(\R)$ at most twice, 
and every geodesic is tangent to a right coset of $\on{Diff}_\A(\R)$ at most once. 

For $\ph_0,\ph_1\in\Diff_{\A}(\R)$ we can give the following formula for the size of the shift along the connecting minimal geodesic:
$$
\on{Shift}(\ph(t))=\frac{t^2-t}{4}\big\|R(\ph_0)-R(\ph_1)\big\|^2_{L^2} 
= (t^2-t)\big\|\sqrt{\ph_0'}-\sqrt{\ph_1'}\,\big\|^2_{L^2}   .
$$
\end{thm*}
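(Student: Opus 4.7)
The plan is to pull the entire computation back through the $R$-map, turning the geodesic in $\Diff_\AA(\R)$ into a straight line in $\A(\R,\R_{>-2})$ and reading off the shift from the inverse formula. Set $\ga_0 = R(\ph_0)$ and $k = T_{\ph_0}R.(u_0\o\ph_0) = (\ph_0')^{1/2}(u_0'\o\ph_0)$, so the geodesic is $R(\ph(t)) = \ga(t) = \ga_0 + tk$. Applying $R\i$ and taking $x\to\infty$ gives
\[
\on{Shift}(\ph(t)) = \tfrac{1}{4}\int_\R \ga(t)^2 + 4\ga(t)\,dx,
\]
and everything reduces to expanding this quadratic in $t$.

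Expanding $\ga(t) = \ga_0 + tk$ produces three pieces. The constant term is $\on{Shift}(\ph_0)$. The quadratic term is $\tfrac{t^2}{4}\int_\R k^2\,dx$, which by the isometry theorem of Section \ref{RmapDiffAA} equals $\tfrac{t^2}{4}\|u_0\o\ph_0\|_{\dot H^1}^2 = \tfrac{t^2}{4}\int(u_0')^2\,dx$. For the linear term, I would use the identity $\ga_0 + 2 = 2(\ph_0')^{1/2}$, giving $k(\ga_0+2) = 2\ph_0'\,(u_0'\o\ph_0)$, and then a change of variables combined with $u_0(-\infty)=0$ yields $\int k(\ga_0+2)\,dx = 2\int u_0'(y)\,dy = 2u_0(\infty)$. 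Summing gives the claimed formula for $\on{Shift}(\ph(t))$.

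For $u(t,\infty)$, the identity $\p_t\ph(t,x) = u(t,\ph(t,x))$ combined with the interchange of limits $x\to\infty$ (justified by the uniform decay of $u(t)$ built into $\AA(\R)$ and the fact that $\ph(t,x)\to\infty$) shows $u(t,\infty) = \p_t\on{Shift}(\ph(t))$, so the linear-in-$t$ formula follows by differentiation. The membership $\ph(t)\in\Diff_\A(\R)$ is equivalent to $\on{Shift}(\ph(t))=0$, and a non-trivial quadratic has at most two zeros; tangency to a right coset of $\Diff_\A(\R)$ at $\ph(t)$ is equivalent to $u(t)\in\A(\R)$, i.e.\ $u(t,\infty)=0$, and a non-trivial linear function has at most one zero.

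For the final formula, when $\ph_0,\ph_1\in\Diff_\A(\R)$ the minimizing geodesic is the straight line $\ga(t) = \ga_0 + t(\ga_1-\ga_0)$, so $k = \ga_1-\ga_0$. The vanishing constraints $\on{Shift}(\ph_0)=\on{Shift}(\ph_1)=0$ read $\int\ga_i^2+4\ga_i\,dx = 0$ for $i=0,1$, and subtracting yields $\int k(\ga_0+2)\,dx = -\tfrac12\int k^2\,dx$. Substituting into the general formula collapses it to $\on{Shift}(\ph(t)) = \tfrac{t^2-t}{4}\|k\|_{L^2}^2 = \tfrac{t^2-t}{4}\|R(\ph_0)-R(\ph_1)\|_{L^2}^2$, and expanding $R(\ph_i) = 2((\ph_i')^{1/2}-1)$ gives the factor of $4$ needed for the equivalent $\sqrt{\ph'}$ form. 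The only mildly delicate point in the whole argument is the limit interchange for $u(t,\infty)$; everything else is direct bookkeeping after the $R$-map is in place.
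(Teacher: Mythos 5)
Your argument follows the paper's proof almost step for step in the parts it shares with it: the same reduction via the $R$-map to the straight line $\ga(t)=\ga_0+tk$, the same expansion of $\on{Shift}(\ph(t))=\tfrac14\int_\R\ga(t)^2+4\ga(t)\,dx$, the same identities $\int_\R k(\ga_0+2)\,dx=2u_0(\infty)$ and $\int_\R k^2\,dx=\int_\R(u_0')^2\,dx$, and the same algebraic collapse to $\tfrac{t^2-t}{4}\|\ga_1-\ga_0\|_{L^2}^2$ when $\on{Shift}(\ph_0)=\on{Shift}(\ph_1)=0$. The one place you genuinely diverge is $u(t,\infty)$: you obtain it as $\p_t\on{Shift}(\ph(t))$, whereas the paper evaluates the identity $u(\infty)=\tfrac12\int_\R(R(\ph)+2)\,T_\ph R.(u\o\ph)\,dx$ along the geodesic, using $T_{\ph(t)}R.(u(t)\o\ph(t))=\p_t\ga(t)=k$; the paper's route avoids the limit interchange you flag, while yours is shorter once the shift formula is in hand. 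But you should actually carry your differentiation out: it gives
\[
u(t,\infty)=\p_t\on{Shift}(\ph(t))=u_0(\infty)+\frac t2\int_\R(u_0')^2\,dx ,
\]
\emph{not} the stated $u_0(\infty)+t\int_\R(u_0')^2\,dx$. The coefficient $t/2$ is the correct one: it also follows from the paper's own intermediate line $u(t,\infty)=\tfrac12\int_\R(\ga_0+tk_0+2)k_0\,dx$ and from the explicit solution formula $u(t,x)=u_0(\ph\i(t,x))+\tfrac t2\int_{-\infty}^{\ph\i(t,x)}(u_0'(y))^2\,dy$ given later in the paper; the theorem statement (and the last line of the paper's computation) drops a factor $\tfrac12$. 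Your method is precisely what exposes this, since the two displayed formulas of the theorem are incompatible with $u(t,\infty)=\p_t\on{Shift}(\ph(t))$. None of the qualitative conclusions (at most two intersections, at most one tangency, the $\tfrac{t^2-t}{4}$ formula) are affected. Two small points: for ``intersects at most twice'' you only need that $\on{Shift}(\ph(t))=0$ is \emph{necessary} for $\ph(t)\in\Diff_\A(\R)$ (the equivalence you assert is not obvious for $\A=W^{\infty,1}$ and is not needed), and both claims require the degenerate case $u_0=0$, where the quadratic and linear functions are identically zero, to be set aside.
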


\begin{proof}
To make the computations easier, define the following variables:
\begin{align*}
\ga_0 &= R(\ph_0) \\
k_0 &= T_{\ph_0} R.(u_0 \o \ph_0) = \sqrt{\ph_0'} u_0' \o \ph_0 \\
\ga(t) &= R(\ph(t)) = \ga_0 + tk_0.
\end{align*}
For a diffeomorphism $\ph \in \Diff_{\AA}(\R)$ the size of the shift at $\infty$ is given by
\[
\on{Shift}(\ph) = \lim_{x \to \infty} \ph(x) - x = \int_{-\infty}^\infty \ph'(x) - 1\;dx = \frac 14 \int_{-\infty}^\infty R(\ph)^2 + 4R(\ph)\;dx.
\]
Similarly, the value of a function $u \in \AA$ at $\infty$ can be computed by
\begin{align*}
u(\infty) &= \int_{-\infty}^\infty u'(x)dx = \int_{-\infty}^\infty (u'\o\ph)(x) \ph'(x)\; dx \\
&= \frac 12 \int_{-\infty}^\infty (R(\ph)+2) T_\ph R.(u\o\ph)(x)\; dx.
\end{align*}
Note that this holds for any $\ph \in \Diff_{\A_1}(\R)$. Since the $R$-map pulls back the $L^2$-metric to the $\dot H^1$-metric we also have the identity
\[
\int_{-\infty}^\infty u'(x)dx = \int_{-\infty}^\infty (T_\ph R.(u\o\ph))^2 dx.
\]
For the sake of convenience let us rewrite the last two equations using the variables $u_0, \ga_0$ and $k_0$.
\begin{align*}
u_0(\infty) &= \frac 12 \int_{-\infty}^\infty (\ga_0 + 2) k_0 \;dx &
\int_{-\infty}^\infty (u_0')^2\; dx = \int_{-\infty}^\infty k_0^2\; dx
\end{align*}
Now computing the shift of $\ph(t)$ at $\infty$ is easy:
\begin{align*}
\on{Shift}(\ph(t)) &= \frac 14 \int_{-\infty}^\infty (\ga_0 + tk_0)^2 + 4(\ga_0 + tk_0)\;dx \\
&= \frac 14 \int_{-\infty}^\infty \ga_0^2 + 4\ga_0 + 2t \ga_0 k_0 + 4t k_0 + t^2 k_0^2 \;dx\\
& = \on{Shift}(\ph_0) + t u_0(\infty) + \frac{t^2}4 \int_{-\infty}^\infty (u_0')^2 dx.
\end{align*}
Computing the value of $u(t)$ at $\infty$ is just as simple:
\begin{align*}
u(t, \infty) &= \frac 12 \int_{-\infty}^\infty (\ga_0 + tk_0 + 2) k_0 \;dx \\
&= u_0(\infty) + t \int_\R (u_0')^2 dx.
\end{align*}

If we start with $\ph_0,\ph_1\in\Diff_{\A}(\R)$, then the geodesic connecting them has $k_0 = \ga_1 - \ga_0$ with $\ga_1 = R(\ph_1)$. Some algebraic manipulations, keeping in mind that $\on{Shift}(\ph_i) = \frac 14 \int \ga_i^2 + 4\ga_i = 0$, give us
\begin{align*}
u_0(\infty) &= \frac 12 \int_{-\infty}^\infty (\ga_0 + 2)(\ga_1 - \ga_0)\;dx = \frac 12 \int_{-\infty}^\infty \ga_0 \ga_1 - \ga_0^2 - 2\ga_0 + 2\ga_1 \;dx \\
&= \frac 12 \int_{-\infty}^\infty \ga_0 \ga_1 - \ga_0^2 + \frac 12 \ga_0^2 -\frac 12 \ga_1^2 \;dx = -\frac 14 \int_{-\infty}^\infty (\ga_1 - \ga_0)^2\; dx,
\end{align*}
which in turn leads to
\[
\on{Shift}(\ph(t)) = tu_0(\infty) + \frac{t^2}4 \int_{-\infty}^\infty (u_0')^2\; dx = \frac{t^2-t}4 \int_{-\infty}^\infty (\ga_1 - \ga_0)^2\; dx.
\]
This completes the proof.
\end{proof}

An example of a geodesic illustrating the behaviour described in the lemma can be seen in Fig.~\ref{fig:paraboloid}. 

\begin{figure}
\begin{center}
\includegraphics[width=.49\textwidth]{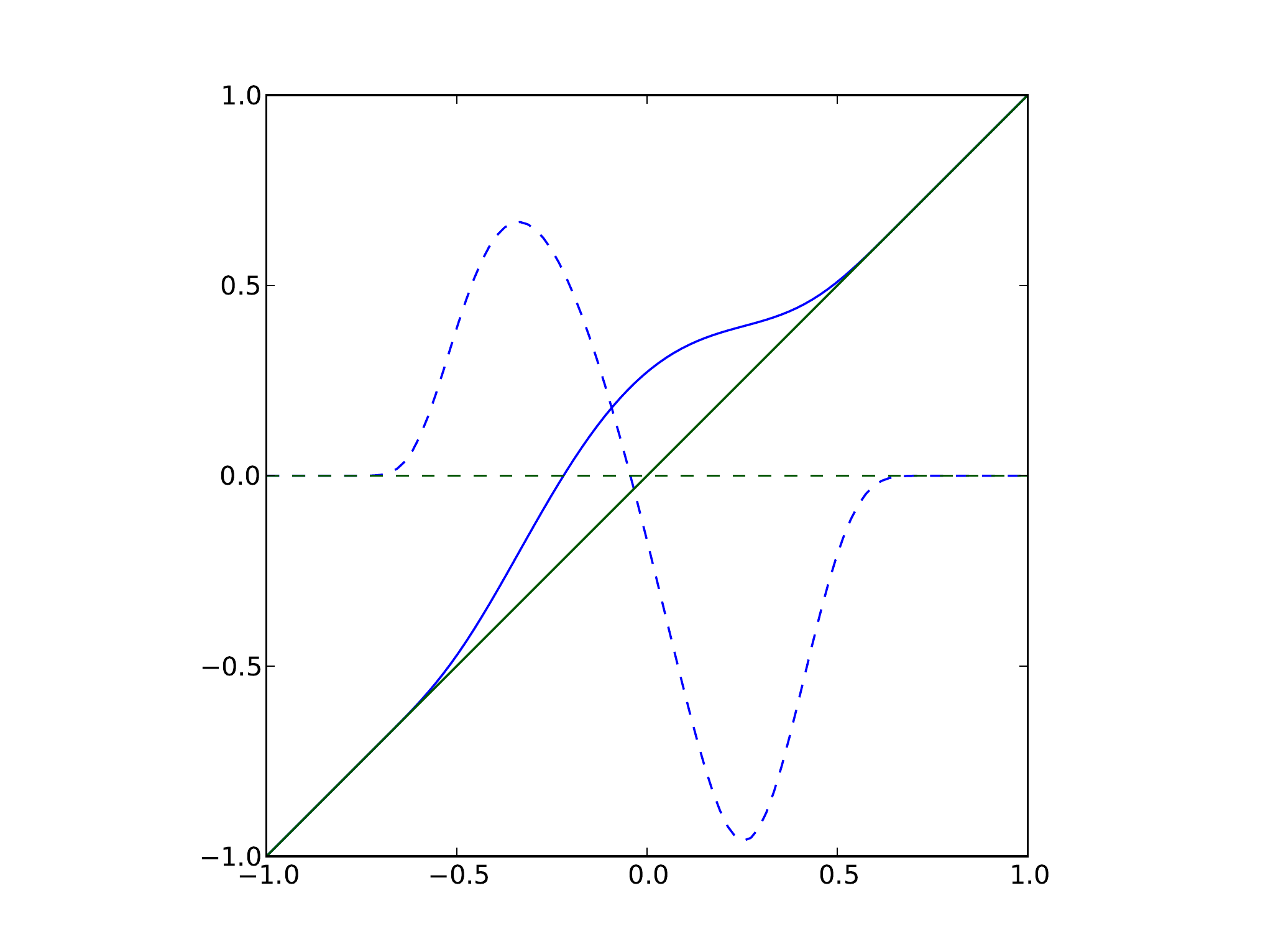}\hspace{-1cm}
\includegraphics[width=.49\textwidth]{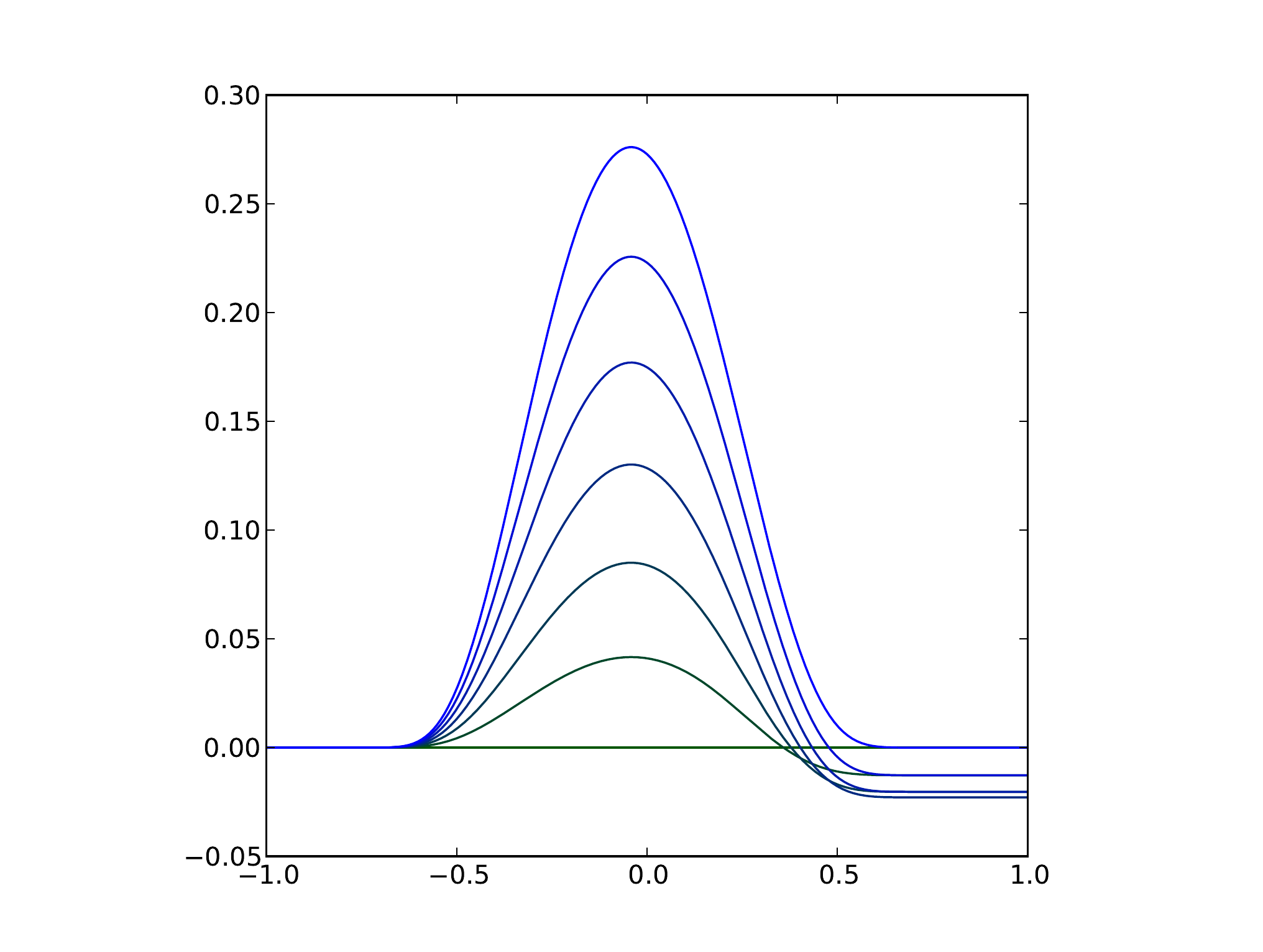}
\end{center}
\caption{Geodesic $\ph(t,x)\in\Diff_{\AA}(\R)$ between two diffeomorphisms in $\Diff_{\A}(\R)$ sampled at times $t=0,\tfrac16,\tfrac26,\tfrac36,\tfrac46,\tfrac56,1$.
 Left image: The geodesic in $R$-map space. Right image: The geodesic in original space, visualized as $\ph(t,x)-x$.}
\label{fig:paraboloid}
\end{figure}

\subsection{Solving the Hunter--Saxton Equation}\label{SolvingHS}
The theory described in the preceding sections  allows us to construct an analytic solution formula for the HS equation on $\AA(\R)$.
Here  $\AA(\R)$ denotes one of the function spaces $C_{c,1}^{\infty}(\R)$, $\mathcal{S}_1(\R)$ or $W^{\infty,1}_1(\R)$, as defined in Sects.~\ref{diffc}--\ref{diffw}.
\begin{thm*}[Solutions to the HS equation]
Given an initial value $u_0$ in $\AA(\R)$  the solution to the HS equation is given by
$$u(t,x) = \ph_t(t,\ph\i(t,x)),\quad\text{with}\quad \ph(t,x)=R\i\big(t u'_0\big)(x).$$
In particular, this means that a solution with initial condition in one of the spaces $C_{c}^{\infty}(\R)$, $\mathcal{S}(\R)$ or $W^{\infty,1}(\R)$ exists for all time $t>0$, if and only if $u'_0(x)\geq0$ for all $x\in\R$.
All solutions are real-analytic in time in the sense of \cite[Section 9]{Michor1997}.
\end{thm*}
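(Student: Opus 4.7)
The idea is to read off HS solutions from the straight-line geodesics on the flat image of the $R$-map. By the isometry theorem of Section~\ref{RmapDiffAA}, $R$ is a Riemannian isometry from $(\Diff_\AA(\R),\dot H^1)$ onto the flat open set $\A(\R,\R_{>-2})\subset \A(\R)$ equipped with the $L^2$-metric, so geodesics in the source are exactly the pullbacks of affine lines in the target. Specializing the tangent map formula $T_\ph R\cdot (X\o\ph) = \sqrt{\ph'}\,(X'\o\ph)$ at $\ph=\on{Id}$ gives $T_{\on{Id}}R\cdot u_0 = u_0'$, hence the unique geodesic starting at $\on{Id}$ with initial velocity $u_0$ is $\ph(t,\cdot) = R\i(tu_0')$. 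The boxed equation \eqref{eq:hs_geod} then asserts that the right-trivialized velocity $u(t,x) = \p_t\ph(t,\ph\i(t,x))$ solves the HS equation, which is the claimed formula.

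\textbf{Global existence.} The expression $\ph(t) = R\i(tu_0')$ is defined precisely as long as $tu_0'(x) > -2$ pointwise; once equality is attained, the identity $\ph_x(t,x) = \tfrac14\bigl(tu_0'(x)+2\bigr)^2$ (from the inversion formula) shows $\ph_x$ vanishes and $\ph$ ceases to be a diffeomorphism. Since $u_0\in\AA(\R)$ forces $u_0'\in\A(\R)$, the derivative is continuous and decays to $0$ at $\pm\infty$, so any negative value of $u_0'$ is actually attained at some point. If $u_0'\ge 0$ everywhere, then $tu_0'\ge 0 > -2$ for all $t > 0$ and the solution exists globally in forward time. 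Conversely, if $m := \inf_x u_0'(x) < 0$ is attained at $x_0$, then at $T = -2/m > 0$ the image curve $\ga(t)=tu_0'$ first exits $\A(\R,\R_{>-2})$ and $\ph_x(T,x_0) = 0$, so the solution fails in finite positive time. This gives the stated equivalence.

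\textbf{Real-analyticity and main obstacle.} The inverse map $R\i(\ga)(x) = x + \tfrac14\int_{-\infty}^x(\ga^2+4\ga)\,dy$ is a polynomial expression in $\ga$ followed by an antiderivative, both real-analytic operations in the convenient setting, while $t\mapsto tu_0'$ is affine; hence $t\mapsto \ph(t,\cdot)$ is a real-analytic curve into $\Diff_\AA(\R)$ in the sense of \cite[Sec.~9]{Michor1997}. Composition with the (real-analytically depending) inverse $\ph\i$ and differentiation in $t$ then yield real-analyticity of $u$ in the time variable. The only genuine conceptual obstacle would be a direct PDE-analytic proof of well-posedness for HS, but the flat geometry bypasses it completely: the HS Cauchy problem is transported by $R$ to the trivial Cauchy problem for straight lines in a flat pre-Hilbert space, with no additional analytic estimates required.
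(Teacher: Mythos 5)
Your proposal is correct and follows essentially the same route as the paper: it identifies geodesics with straight lines via the $R$-map isometry, computes $T_{\on{Id}}R\,.\,u_0=u_0'$, derives the blow-up criterion from the condition $tu_0'(x)>-2$ (equivalently $\ph_x=\tfrac14(tu_0'+2)^2>0$), and obtains real-analyticity in $t$ because $t\mapsto tu_0'$ is affine and $R\i$ respects real-analytic curves. Your treatment of the breakdown time is if anything slightly more explicit than the paper's, which merely notes that $u_0'(x_0)<0$ forces $t_0u_0'(x_0)<-2$ for some finite $t_0$.
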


\begin{proof}
By the theory of the previous sections, we know that the path $$\ph(t,x)=R\i(t \ga)(x),$$ with $\ph(0,x) = x$ is a 
solution to the geodesic equation for every $\ga\in\AA(\R)$. 
It remains to choose $\ga$ such that the initial condition
$$\ph_t(0,\ph\i(0,x))=\ph_t(0,x)=u_0(x)$$
is  satisfied. This can be achieved by choosing $\ga=T_{\on{Id}}R.u_0$, since
\begin{align*}
\ga=\p_t|_{0}(t\ga)=\p_t|_0 R(\ph(t))=T_{\ph(t)}R.\ph_t(t)\big|_{0}=T_{\on{Id}}R. u_0.
\end{align*}
Using the formula $T_\ph R.h = (\ph')^{-1/2} h' $ from the proof of Theorem~\ref{RmapDiffAA} yields
$T_{\on{Id}}R.u_0=u_0'$.

The solution is real-analytic in $t$ since $t\mapsto t.u_0$ is a real-analytic curve in 
$\AA(\mathbb R)$ and since $R\i$ respects real-analytic curves; see~\cite[Section 9]{Michor1997}.

Given $x_0, u_0$ such that $u_0'(x_0)<0$ there exist $t_0\in\R$ with $t_0 u_0'(x_0)<-2$. 
Thus the geodesic at time $t_0$ has left the $R$-map space, and the solution of the HS equation leaves the space $\AA$.
\end{proof}

A more explicit formula for the solution is given by
\begin{align*}
u(t,x) &= u_0(\ph\i(t,x)) + \frac t2  \int_{-\infty}^{\ph\i(t,x)} u_0'(y)^2\;dy \\
\ph(t,x) &= x + \frac 14 \int_{-\infty}^x t^2 (u_0'(y))^2 + 4tu_0'(y)\;dy.
\end{align*}

\subsection*{Remark} The HS equation on the real line also provides an example of how geometry and PDE behaviour influence each other. It was shown in Sect.~\ref{H1metric} that the geodesic equation on $\on{Diff}_\A(\R)$ does not exist because the condition $\on{ad}^\ast(u)\,u \in \check G_{\on{Id}}(u)$ is not satisfied. From a naive point of view we could start with the energy
\begin{equation}
\label{eq:hs_en}
E(u)= \int_0^1 \int_{-\infty}^\infty u_x^2\; dx\; dt
\end{equation}
defined on functions $u \in C^\infty([0,1], \A(\R))$ and take variations of the form $\de u = \et_t + \et_x u - \et u_x$ with fixed endpoints $\et(0,x) = \ph_0(x)$ and $\et(1,x) = \ph_1(x)$. This would lead, after some integration by parts, to
\[
\langle DE(u), \de u \rangle = \int_0^1 \int_{-\infty}^\infty \left( u_{txx} + (uu_x)_{xx} - u_xu_{xx}\right) \et \;dx\; dt
\]
and we could now declare
\[
u_{txx} + (uu_x)_{xx} - u_xu_{xx} = 0
\]
to be the geodesic equation. It is in any case the equation which the critical points of the energy functional \eqref{eq:hs_en} must satisfy. But this equation has no solutions in $\A$. It is shown in Theorem~\ref{lem_par} that a solution $u \in C^\infty([0,1], \AA(\R))$ intersects $\A(\R)$ at most once. To find solutions, we must enlarge the space to $\AA(\R)$ and then the preceding theorem via the $R$-map gives us the existence of solutions.

\subsection{Continuing Geodesics Beyond the Group, or How Solutions of the Hunter--Saxton 
Equation Blow Up}
Consider a straight line $\ga(t) = \ga_0 + t\ga_1$ in $\A(\mathbb R,\mathbb R)$. 
Then $\ga(t)\in\A(\mathbb R,\mathbb R_{>-2})$ precisely for $t$ in an open interval $(t_0,t_1)$ 
which is finite at least on one side, say, at $t_1<\infty$. 
Note that 
$$
\ph(t)(x):=R\i(\ga(t))(x) = x+ \frac14\int_{-\infty}^x \ga^2(t)(u)+4\ga(t)(u)\,du
$$
makes sense for all $t$,
that $\ph(t):\mathbb R\to \mathbb R$ is smooth and that $\ph(t)'(x)\ge0$ for all $x$ and $t$; thus,
$\ph(t)$ is monotone non-decreasing. Moreover, $\ph(t)$ is proper and surjective since $\ga(t)$
vanishes at $-\infty$ and $\infty$. Let 
$$
\on{Mon}_{\AA}(\mathbb R) := \big\{\on{Id}+f: f\in\AA(\mathbb R,\mathbb R), f'\geq-1\big\}
$$
be the monoid (under composition) of all such functions.

For $\ga\in\A(\mathbb R,\mathbb R)$ let 
$x(\ga):=\min\{x\in \mathbb R\cup\{\infty\}: \ga(x)=-2\}.$
Then for the line $\ga(t)$ from above we see that $x(\ga(t))<\infty$ for all $t>t_1$.
Thus, if the `geodesic' $\ph(t)$ leaves the diffeomorphism group at $t_1$, it never comes back but 
stays inside $\on{Mon}_{\AA}(\mathbb R)\setminus \Diff_\AA(\mathbb R)$ for the rest of its life. 
In this sense, $\on{Mon}_{\AA}(\mathbb R)$ is a \emph{geodesic completion} of 
$\Diff_{\AA}(\mathbb R)$, and $\on{Mon}_\AA(\mathbb R)\setminus \Diff_\AA(\mathbb R)$ is the 
\emph{boundary}.

What happens to  the corresponding solution $u(t,x)= \ph_t(t,\ph(t)\i(x))$ of the HS equation? 
In certain points it has infinite derivative, it may be multivalued, or its graph can contain 
whole vertical intervals. If we replace an element $\ph\in\on{Mon}_\AA(\mathbb R)$ by its graph 
$\{(x,\ph(x)):x\in \mathbb R\}\subset \mathbb R$ we get a smooth `monotone' submanifold, a smooth 
monotone relation. The inverse $\ph\i$  is then also a smooth monotone relation.
Then $t \mapsto\{(x,u(t,x)):x\in \mathbb R\}$ is a (smooth) curve of relations. 
Checking that it satisfies the HS equation is an exercise left for the interested reader. 
What we have described here is the \emph{flow completion} of the HS equation in the spirit of
\cite{KhesinMichor2004}.

\subsection{Soliton-Like Solutions} For a right-invariant metric on a diffeomorphism group one can 
ask whether (generalized) solutions $u(t)=\ph_t(t)\o\ph(t)\i$ exist such that the momenta 
$\check G(u(t))=:p(t)$ 
are distributions with finite support. 
Here the geodesic $\ph(t)$ may exist only in some suitable Sobolev completion of the diffeomorphism 
group. By the general theory (see in particular \cite[Sect.~4.4]{Michor2006a}, or 
\cite{Micheli2012_preprint}) the momentum
$\on{Ad}(\ph(t))^*p(t) = \ph(t)^*p(t) = p(0)$ is constant. In other words,
$p(t)=(\ph(t)\i)^*p(0)=\ph(t)_*p(0)$, i.e., the momentum is carried forward by the flow and 
remains in the space of distributions with finite support.
The infinitesimal version (take $\p_t$ of the last expression) is 
$p_t(t) = -\L_{u(t)}p(t)= -\on{ad}_{u(t)}{}^*\,p(t)$; c.f. Sect.~\ref{adjoint}.
The space of $N$-solitons of order 0 consists of momenta of the form $p_{y,a} = \sum_{i=1}^N a_i \de_{y_i}$ with $(y,a) \in \R^{2N}$. Consider an initial soliton $p_0=\check G(u_0) = - u_0'' = \sum_{i=1}^N a_i \,\de_{y_i}$ with $y_1<y_2<\dots<y_N$. Let $H$ be the Heaviside function
\[
H(x) = \begin{cases}
0, &x<0, \\
\frac 12, &x = 0, \\
1, &x > 0,
\end{cases}
\]
and $D(x) = 0$ for $x\leq 0$ and $D(x)=x$ for $x > 0$. We will see later why the choice $H(0) = \tfrac 12$ is the most natural one; note that the behavior is called the Gibbs phenomenon. With these functions we can write
\begin{align*}
u_0''(x) &= -\sum_{i=1}^N a_i \de_{y_i}(x) \\
u_0'(x) &= -\sum_{i=1}^N a_i H(x-y_i) \\
u_0(x) &= -\sum_{i=1}^N a_i D(x-y_i).
\end{align*}
We will assume henceforth that $\sum_{i=1}^N a_i = 0$. Then $u_0(x)$ is constant for $x > y_N$ and thus $u_0 \in H_1^1(\R)$; with a slight abuse of notation we assume that $H_1^1(\R)$ is defined similarly to $H_1^\infty(\R)$. Defining $S_i = \sum_{j=1}^i a_j$ we can write
\[
u_0'(x) = -\sum_{i=1}^N S_i \left( H(x - y_i) - H(x - y_{i+1})\right).
\]
This formula will be useful because $\on{supp}( H(. - y_i) - H(. - y_{i+1})) = [y_i, y_{i+1}]$.

The evolution of the geodesic $u(t)$ with initial value $u(0) = u_0$ can be described by a system of ordinary differential equations (ODEs) for the variables $(y, a)$. We cite the following result.
\begin{thm*}[\cite{Holm2005}]
The map $(y,a) \mapsto \sum_{i=1}^N a_i \de_{y_i}$ is a Poisson map between the canonical symplectic structure on $\R^{2N}$ and the Lie--Poisson structure on the dual  $T^\ast_{\on{Id}} \on{Diff}_\A(\R)$ of the Lie algebra.
\end{thm*}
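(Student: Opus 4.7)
The plan is to verify the defining property of a Poisson map by an explicit chain-rule computation. Writing $\Phi(y,a) = \sum_{i=1}^N a_i\,\de_{y_i}$ and $\hat F := F\o\Phi$ for a smooth $F$ on $\mathfrak g^\ast$, one needs
\[
\{\hat F, \hat G\}_{\R^{2N}} \;=\; \{F, G\}_{LP} \o \Phi,
\]
where the left bracket is the canonical one with $\{y_i, a_j\} = \de_{ij}$ and the right is $\{F, G\}_{LP}(p) = \pm \langle p, [dF(p), dG(p)]_{\mathfrak g}\rangle$ (the sign being fixed by the right-invariant convention for Lie--Poisson reduction used in Sect.~\ref{adjoint}). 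Throughout, $dF(p) \in \mathfrak g = \A(\R)$ denotes the variational derivative defined by $DF(p)\cdot q = \langle q, dF(p)\rangle$; at $p = \Phi(y,a)$ the pairing reduces to $\langle p, X\rangle = \sum_i a_i X(y_i)$.

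First I would compute the partial derivatives of $\hat F$ by applying the chain rule to $\Phi$. The key distributional identity is that, as elements of the dual of $\A(\R)$, one has $\p_{y_i}\de_{y_i} = -\de'_{y_i}$ (checked by pairing with $X \in \A(\R)$: both sides yield $X'(y_i)$). Hence
\[
\p_{a_i}\hat F(y,a) \;=\; dF(p)(y_i), \qquad \p_{y_i}\hat F(y,a) \;=\; -a_i\,\bigl(dF(p)\bigr)'(y_i)\cdot(-1) \;=\; a_i\,\bigl(dF(p)\bigr)'(y_i).
\]
Writing $X = dF(p)$, $Y = dG(p)$ and substituting into the canonical bracket gives
\[
\{\hat F,\hat G\}_{\R^{2N}} \;=\; \sum_{i=1}^N\bigl(\p_{y_i}\hat F\,\p_{a_i}\hat G - \p_{a_i}\hat F\,\p_{y_i}\hat G\bigr) \;=\; \sum_{i=1}^N a_i\bigl(X'(y_i)Y(y_i) - X(y_i)Y'(y_i)\bigr).
\]
Using the Lie bracket $[X,Y]_{\mathfrak g} = XY' - X'Y$ identified in the computation of $\on{ad}(X)^\ast \check G_{\on{Id}}(Y)$ in Sect.~\ref{H1metric}, the sum equals $-\sum_i a_i\,[X,Y]_{\mathfrak g}(y_i) = -\langle p, [dF(p),dG(p)]_{\mathfrak g}\rangle$, which is precisely the right-invariant Lie--Poisson bracket evaluated at $p = \Phi(y,a)$.

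The only real subtlety is the distributional calculation underlying $\p_{y_i}\de_{y_i} = -\de'_{y_i}$ and the need to pair $\de'_{y_i}$ against the smooth variational derivatives $dF(p) \in \A(\R)$; both are unambiguous because $\A(\R) \subset C^\infty(\R)$ and $dF(p)$ and its derivative are bounded at the discrete support points. A subsidiary check is that the relevant class of Hamiltonians $F$ on $\mathfrak g^\ast$ actually admits a variational derivative $dF(p) \in \A(\R)$ at finitely supported $p$; this is the standard convenient-calculus setting for EPDiff-type Hamiltonian systems, so no genuine analytic obstacle arises, and the sign of the bracket is fixed to match the right-invariant Lie--Poisson convention adopted throughout the paper.
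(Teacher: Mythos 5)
Your verification is correct, but note that the paper does not prove this statement at all --- it is quoted from Holm--Marsden \cite{Holm2005}, where the result is obtained from momentum-map machinery: the singular-solution map is exhibited as an equivariant momentum map for a cotangent-lifted action of the diffeomorphism group on an embedding space, and equivariant momentum maps are automatically Poisson. Your direct chain-rule computation is a genuinely different and more elementary route, well suited to this one-dimensional situation: the identity $\p_{y_i}\de_{y_i}=-\de'_{y_i}$ gives $\p_{a_i}\hat F=dF(p)(y_i)$ and $\p_{y_i}\hat F=a_i\,(dF(p))'(y_i)$, whence $\{\hat F,\hat G\}_{\R^{2N}}=\sum_i a_i\bigl(X'(y_i)Y(y_i)-X(y_i)Y'(y_i)\bigr)=\langle p,\on{ad}(X)Y\rangle$ with $\on{ad}(X)Y=X'Y-XY'$ as in Sect.~\ref{H1metric}; this is exactly the Lie--Poisson bracket whose Hamiltonian flow is $p_t=-\on{ad}(\check\ga\i p)^*p$, consistent with the sign conventions of Sect.~\ref{adjoint} and with Hamilton's equations $\dot y_i=\p E/\p a_i$, $\dot a_i=-\p E/\p y_i$ used afterwards in the paper. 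Two small points you should make explicit rather than wave at: (i) you may, and should, restrict to the linear functionals $F_X(p)=\langle p,X\rangle$, $X\in\A(\R)$, which have $dF_X\equiv X$ and determine the Lie--Poisson bracket, thereby avoiding any discussion of which nonlinear $F$ admit variational derivatives in $\A(\R)$ at finitely supported momenta; and (ii) the sign of the Lie--Poisson bracket should be pinned down once (as above) rather than left as ``$\pm$'', since with the wrong sign the map would be anti-Poisson. What the momentum-map proof buys in exchange for its overhead is conceptual information your computation does not give, namely equivariance and the interpretation of $(y,a)$ as canonical variables on a collective phase space; for the purpose of justifying the soliton ODEs in this paper, your direct verification suffices.
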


In particular, this means that the ODEs for $(y,a)$ are Hamilton's equations for the pullback Hamiltonian
\[
E(y,a) = \frac 12 G_{\on{Id}}(u_{(y,a)}, u_{(y,a)}),
\]
with $u_{(y,a)} = \check G\i(\sum_{i=1}^N a_i \de_{y_i}) = -\sum_{i=1}^N a_i D(. - y_i)$. We can obtain the more explicit expression
\begin{align*}
E(y,a) &= \frac 12 \int_\R \left(u_{(y,a)}(x)'\right)^2 \;dx 
= \frac 12 \int_\R \left(\sum_{i=1}^N S_i \mathbbm{1}_{[y_i, y_{i+1}]} \right)^2 \;dx\\
&= \frac 12 \sum_{i=1}^N S_i^2 (y_{i+1} - y_i) .
\end{align*}
Hamilton's equations $\dot y_i = \p E/\p a_i$, $\dot a_i = -\p E/\p y_i$ are in this case
\begin{align*}
\dot y_i(t) &= \sum_{j=i}^{N-1} S_i(t) (y_{i+1}(t) - y_i(t)), \\
\dot a_i(t) &= \frac 12 \left( S_i(t)^2 - S_{i-1}(t)^2\right).
\end{align*}

Using the $R$-map we can find explicit solutions for these equations as follows. 
Let us write $a_i(0) = a_i$ and $y_i(0) = y_i$. By Theorem \ref{SolvingHS} the geodesic with initial velocity $u_0$ is given by
\begin{align*}
\ph(t,x) &= x + \frac 14 \int_{-\infty}^x t^2 (u_0'(y))^2 + 4tu_0'(y)\;dy \\
u(t,x) &= u_0(\ph\i(t,x)) + \frac t2  \int_{-\infty}^{\ph\i(t,x)} u_0'(y)^2\;dy.
\end{align*}
First note that
\begin{align*}
\ph'(t,x) &= \left(1 + \frac t2 u_0'(x)\right)^2 \\
u'(t,z) &= \frac {u_0'\left(\ph\i(t,z)\right)}{1 + \frac t2 u_0'\left(\ph\i(t,z)\right)}.
\end{align*}
Using the identity $H(\ph\i(t,z) - y_i) = H(z - \ph(t,y_i))$ we obtain
\[
u_0'\left(\ph\i(t,z)\right) = -\sum_{i=1}^N a_i H\left(z - \ph(t, y_i)\right),
\]
and thus
\[
\left(u_0'\left(\ph\i(t,z)\right) \right)' = -\sum_{i=1}^N a_i \de_{\ph(t, y_i)}(z).
\]
Combining these we obtain
\begin{align*}
u''(t,z) &= \frac {1}{\left( 1+ \frac t2 u_0'\left(\ph\i(t,z)\right)\right)^2} \left( -\sum_{i=1}^N a_i \de_{\ph(t, y_i)}(z)\right) \\
&= \sum_{i=1}^N \frac{-a_i}{\left( 1 + \frac t2 u_0'(y_i)\right)^2} \de_{\ph(t, y_i)}(z).
\end{align*}
From here we can read off the solution of Hamilton's equations
\begin{align*}
y_i(t) &= \ph(t, y_i) \\
a_i(t) &= -a_i \left( 1 + \tfrac t2 u_0'(y_i)\right)^{-2}.
\end{align*}
When trying to evaluate $u_0'(y_i)$,
\[
u_0'(y_i) = a_i H(0) - S_i,
\] 
we see that $u_0'$ is discontinuous at $y_i$ and it is here that we seem to have the freedom to choose the value $H(0)$. However, it turns out that
we observe  the Gibbs phenomenon, i.e., only the choice $H(0) = \frac 12$ leads to solutions of 
Hamilton's equations. Also, the regularized theory of multiplications of distributions (see the 
discussion in \cite[Sect.~1.1]{GKOS01}) leads to this choice. 
Thus we obtain
\begin{align*}
y_i(t) &= y_i + \sum_{j=1}^{i-1} \left(\frac{t^2}4 S_j^2 - t S_j\right) \left(y_{j+1}-y_j\right) \\
a_i(t) &= \frac{-a_i}{\left( 1 + \frac t2 \left(\frac{a_i}2 - S_i \right)\right)^2} = -\left( \frac{S_i}{1 - \frac t2 S_i} - \frac{S_{i-1}}{1 - \frac t2 S_{i-1}}\right).
\end{align*}
It can be checked by direct computation that these functions indeed solve Hamilton's equations.

\section{Two-Component Hunter-Saxton Equation on Real Line}
\label{2hs}
In this section we will show, similarly to the previous section, that one can adapt the work of Lenells on the periodic two-component HS equation \cite{Lenells2011_preprint} to obtain results for the non-periodic case.
On the real line this system has been studied from an analytical viewpoint in Sect.~\cite[Section~4]{Wunsch2010b}.
\begin{thm*}
Let $\M=\Diff_{\A}(\mathbb R)\ltimes \A(\R,\R)$ and 
$\wt\M=\Diff_{\AA}(\mathbb R)\ltimes \A(\R,\R)$ be the semi-direct product Lie groups with multiplication
$$(\ph,\al)(\psi,\be)=(\ph\circ\psi, \be+\al\circ\psi),$$
where $\A$ and $\AA$ are as defined in \ref{setting}. Consider the following weak Riemannian metric $G$ on  $M$ and $\wt M$:
$$G_{(\on{Id},0)}((X,a),(Y,b)) = \int X'(x)Y'(x) + a(x)b(x)\;dx,$$
where
$(X,a)$ and $(Y,b)$ are elements of the corresponding Lie algebra.

Then the geodesic equation on $\wt\M$ is the two-component non-periodic HS equation given by
$$\boxed{\begin{aligned}
(u,\rho)&=(\ph_t\circ\ph\i,\al_t\circ\ph\i)\\
\qquad u_{t} &= -u u_x + \frac12 \int_{-\infty}^x   u_x(z)^2 +\rho^2(z)  \,dz\\
\rho_t &= -(\rho u)_x
\end{aligned}}$$
The geodesic equation does not exist on   $\M$, since the adjoint 
$\on{ad}((X,a))^*G_{(\on{Id},0)}$ is not in $G_{(\on{Id},0)}(\text{Lie algebra})$ for all $(X,a)$. 
These are not  robust Riemannian manifolds in the sense of \cite[Sect.~2.4]{Michor2012a_preprint}.
\end{thm*}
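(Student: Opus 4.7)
The plan is to mirror the proof of the first theorem in Sect.~\ref{H1metric}: compute $\on{ad}^\ast$ explicitly on the Lie algebra of the semi-direct product, then test the integrability condition $\on{ad}^\ast_{(X,a)}\check G_{(\on{Id},0)}(X,a) \in \check G_{(\on{Id},0)}(\text{Lie algebra})$ from Sect.~\ref{adjoint}. Differentiating the group law $(\ph,\al)(\psi,\be)=(\ph\circ\psi,\be+\al\circ\psi)$ twice at $(\on{Id},0)$, with the sign convention of Sect.~\ref{adjoint}, yields
$$\on{ad}_{(X,a)}(Y,b) = \big(X'Y - XY',\;a'Y - Xb'\big).$$
Using the $L^2$-pairing, the musical map is $\check G_{(\on{Id},0)}(X,a) = (-X'',a)$, and a standard integration by parts dualises the bracket to
$$\on{ad}^\ast_{(X,a)}(\xi,\beta) = \big((X\xi)' + X'\xi + a'\beta,\;(X\beta)'\big).$$

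Substituting $(\xi,\beta) = (-X'',a)$ makes the second slot equal to $(Xa)' \in \A(\R)$ and collapses the first slot to $-2X'X''-XX''' + aa'$. Combining the identity $-2X'X''-XX''' = \tfrac12((X')^2 - (X^2)'')'$ already used in the one-component case with $aa' = \tfrac12(a^2)'$ rewrites the first slot as $\tfrac12((X')^2 + a^2 - (X^2)'')'$. Integrating twice from $-\infty$, using that $X \in \AA(\R)$ and $a \in \A(\R)$ both vanish at $-\infty$, produces
$$Z(x) = X(x)X'(x) - \tfrac12\int_{-\infty}^x \big((X'(y))^2 + a(y)^2\big)\,dy,$$
which lies in $\AA(\R)$ because $Z' \in \A(\R)$ and $Z(-\infty)=0$. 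Hence $\check G^{-1}_{(\on{Id},0)}\on{ad}^\ast_{(X,a)}\check G_{(\on{Id},0)}(X,a) = (Z,(Xa)')$, and the general formula $\p_t u = -\check G^{-1}\on{ad}_u^\ast\check G(u)$ of Sect.~\ref{adjoint} immediately reproduces the boxed two-component HS system.

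For $\M = \Diff_\A(\R)\ltimes\A(\R)$ the first component $Z$ would additionally have to vanish at $+\infty$; this forces $\int_\R((X')^2 + a^2)\,dx = 0$, whence $(X,a) = (0,0)$. Thus $\on{ad}^\ast_{(X,a)}\check G_{(\on{Id},0)}(X,a) \notin \check G_{(\on{Id},0)}(\A(\R)\ltimes\A(\R))$ for any non-trivial tangent vector and the geodesic equation does not exist on $\M$. Running the same procedure with a test vector $(Y,b) \neq (X,a)$, and using one further integration by parts to tame the term $-XY''' - 2X'Y'' + a'b$, shows that even on $\wt\M$ the full transpose $\on{ad}(X,a)^\top$ is defined only on a proper subspace of the Lie algebra, so both metrics are non-robust in the sense of \cite[Sect.~2.4]{Michor2012a_preprint}, exactly as in the remark following the one-component theorem in Sect.~\ref{H1metric}. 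The only genuinely new piece of bookkeeping compared to the one-component case is keeping track of the $\tfrac12(a^2)'$ term and verifying that it fits cleanly into the divergence structure of the first component; I expect this to be the main technical point but not a serious obstacle.
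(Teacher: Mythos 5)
Your proposal is correct and follows essentially the same route as the paper: compute $\on{ad}^*$ by $L^2$-duality and integration by parts, specialize to the diagonal $(Y,b)=(X,a)$ so that the first slot becomes the exact derivative $\tfrac12\big((X')^2+a^2-(X^2)''\big)'$, integrate from $-\infty$ to land in $\AA(\R)\times\A(\R)$, and observe that the required vanishing at $+\infty$ forces $\int_\R\big((X')^2+a^2\big)\,dx=0$ on $\M$. The only (cosmetic) difference is that you first write $\on{ad}^*_{(X,a)}$ on a general covector $(\xi,\beta)$ and then substitute $\check G(X,a)=(-X'',a)$, whereas the paper computes $\on{ad}((X,a))^*G((Y,b))$ directly.
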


\subsection*{Remark}
Note that one obtains more so-called classical forms of the HS equation by  differentiating the  equation for $u_t$:
\begin{align*}
u_{tx} &= -u u_{xx}+\frac12 (-u_x^2+\rho^2);
\\
u_{txx} &= \big(-u u_{xx}+\frac12 (-u_x^2+\rho^2)\big)_x = -2 u_xu_{xx}-u u_{xxx}+\rho\rho_x .
\end{align*}
The second of the precedeing equations is the version which Lenells called the two-component HS equation in \cite{Lenells2011_preprint}.

\begin{proof}
To proof the formula for the geodesic equation we need to calculate the adjoint as defined in Sect.~\ref{adjoint}. For vector fields 
$(X,a)$ and $(Y,b)$ the Lie bracket is given by
$$[(X,a),(Y,b)]=\left(-[X,Y], -(\L_Xb-\L_Ya)\right).$$
We calculate 
\begin{align*}
\big\langle \on{ad}&((X,a))^*G((Y,b)),(Z,c)\big\rangle\\& = G((Y,b),\on{ad}((X,a))(Z,c)) \\&= G\Big((Y,b),\left(-[X,Z], -\L_Xc+\L_Za \right)\Big)
\\&
=\int_{\mathbb R} Y'(x)\big(X'(x)Z(x)-X(x)Z'(x)\big)' +b(x)(-\L_Xc(x)+\L_Za(x))\,dx\\&
=\int_{\mathbb R} Y'(x)\big(X''(x)Z(x) + X'(x)Z'(x)-X'(x)Z'(x)-X(x)Z''(x)\big)\,dx
\\&\qquad +\int_{\mathbb R} b(x)(-c'(x)X(x)+a'(x)Z(x))\,dx\\&
=\int_{\mathbb R}    Z(x) \big(X''(x)Y(x) + (X(x)Y'(x))''+ b(x)a'(x)\big)\,dx
\\&\qquad +\int_{\mathbb R} c(x)(b'(x)X(x)+b(x)X'(x))\,dx\\&
=\Big\langle \big(X''Y + (XY')''+ ba',b'X+bX'\big),(Z,c)\Big\rangle.
\end{align*}
Therefore, the adjoint is given by 
\begin{align*}
\on{ad}(X,a)^* G(Y,b)(x)&=\big(X''Y + (XY')''+ ba',b'X+bX'\big)
 .
\end{align*}
Note that for general $(X,a),(Y,b) \in\A\times\A$ the  adjoint is not an 
element of $G(\A\times\A)$; the same statement is true for $\AA\x \A$. 
But for $(X,a)$ equal to $(Y,b)$  we can rewrite the precedeing equation in a form similar to that in Sect.~\ref{H1metric} to obtain
\begin{align*}
\on{ad}(X,a)^* G(X,a)(x)&= \left(\frac12(X'^2)' + \frac12(X^2)'''+ \frac12(a^2)',a'X+aX'\right)\\
&= \left(\frac12\left(\int^x_{-\infty} X'^2+a^2 dx + (X^2)'\right)'' ,a'X+aX'\right)\\
&= \check G \left(\frac12\left(\int^x_{-\infty} X'^2+a^2 dx + (X^2)'\right) ,a'X+aX'\right).
\end{align*}
An argumentat similar to that in Sect.~\ref{H1metric} proves the existence of the adjoint and, thus, of the geodesic equation on $\wt\M$ and the non-existence on $\M$.
\end{proof}

\begin{thm*}[$R$-map for 2HS equation]
Define the map
$$ R:\left\{
\begin{aligned}
 \wt M&\to 
\big(\A(\R,\mathbb C/\{-2\}), \|\cdot\|_{L^2}\big)\\
(\ph,\al) &\mapsto
2\;\ph'^{1/2} e^{i\al/2}-2.
\end{aligned}\right.
$$
The $R$-map is invertible with inverse
$$R\i :\left\{
\begin{aligned}\A(\R,\mathbb C/\{-2\}) &\to \wt M
\\\ga&\mapsto \Big(x+\frac14 \int_{-\infty}^x (|\ga+2|^2-4)\;dx, 2\on{arg}(\ga(x)+2)\Big) .
\end{aligned}\right.$$ 
The pullback of the flat $L^2$-metric via $R$ is the  metric $G$ as defined in the previous theorem. 
Thus the space $\big(\wt M,G\big)$ is a flat space in the sense of Riemannian geometry.
\end{thm*}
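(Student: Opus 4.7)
The plan is to adapt the argument from Theorem~\ref{RmapDiffAA} to the semi-direct product $\wt M$, treating the two tangential components jointly by bundling $(\ph',\al)$ into the single complex quantity $\sqrt{\ph'}\,e^{i\al/2}$. I would proceed in three stages: first check that $R$ lands in the stated image, then read off the inverse, and finally pull back the $L^2$-metric via the tangent map of $R$.

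\textbf{Stage 1.} Writing $\ph = \on{Id}+f$ with $f\in\AA(\R)$, I would split
\[
R(\ph,\al) = 2\bigl(\sqrt{1+f'}-1\bigr)e^{i\al/2} + 2\bigl(e^{i\al/2}-1\bigr).
\]
The first summand lies in $\A(\R,\mathbb{C})$ by exactly the Taylor-plus-module argument used in the proof of Theorem~\ref{RmapDiffAA}, because $\A(\R)$ is a $\mathcal{B}(\R)$-module and $e^{i\al/2}$ is uniformly bounded; the second summand is in $\A(\R,\mathbb{C})$ by a Taylor expansion of $e^{z}-1$ around $0$ together with $\al\in\A(\R)$. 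Since $\ph'>0$, the quantity $\ga+2 = 2\sqrt{\ph'}e^{i\al/2}$ never vanishes, so indeed $\ga\in\A(\R,\mathbb{C}\setminus\{-2\})$.

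\textbf{Stage 2.} The identity $\ga+2 = 2\sqrt{\ph'}e^{i\al/2}$ gives directly $|\ga+2|^2 = 4\ph'$, whence $\tfrac14(|\ga+2|^2-4) = \ph'-1$, and $\arg(\ga+2) = \al/2$ when one selects the continuous lift of $\arg$ that vanishes at $-\infty$ (well defined because $\ga+2$ avoids $0$ and tends to $2$). Integrating $\ph'-1$ from $-\infty$ recovers $\ph\in\Diff_\AA(\R)$, and $\al=2\arg(\ga+2)$ recovers $\al$; the opposite composition $R\circ R^{-1}$ is checked by the same algebra.

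\textbf{Stage 3.} On a right-trivialized tangent vector $(X\o\ph,a\o\ph)$ the product rule, together with $(\ps(t)\o\ph)' = (\ps(t)'\o\ph)\ph'$ as in the proof of Theorem~\ref{RmapDiffAA}, yields
\[
T_{(\ph,\al)}R\cdot(X\o\ph,a\o\ph) = (\ph')^{1/2}e^{i\al/2}\bigl(X'\o\ph + i\,a\o\ph\bigr).
\]
With the real $L^2$-pairing $\langle z,w\rangle_{L^2} = \Re\int_\R \bar z\, w\,dx$ on $\A(\R,\mathbb{C})$, the phase factor cancels and the remaining integrand is $\ph'\bigl((X_1' X_2' + a_1a_2)\o\ph\bigr)$. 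After the substitution $y = \ph(x)$ this is exactly $\int_\R X_1'X_2' + a_1a_2\,dy = G_{(\on{Id},0)}((X_1,a_1),(X_2,a_2))$, so $R^{\ast}\langle\cdot,\cdot\rangle_{L^2} = G$; flatness of $\wt M$ is then immediate because $\A(\R,\mathbb{C}\setminus\{-2\})$ is an open subset of the pre-Hilbert space $\A(\R,\mathbb{C})$.

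The main obstacle is the branch selection in Stage~2: the continuous determination of $\arg(\ga+2)$ beginning at $0$ at $-\infty$ need not return to $0$ at $+\infty$, so strictly speaking $R$ is a bijection only onto the winding-zero component of $\A(\R,\mathbb{C}\setminus\{-2\})$ (equivalently, onto those $\ga$ for which the lift of $\arg(\ga+2)$ actually lies in $\A(\R)$). Either this restriction is implicit in the notation $\mathbb{C}/\{-2\}$, or one must verify that the other winding classes lie outside the image. Everything else is a direct, two-component adaptation of the single-component argument already carried out for $\Diff_\AA(\R)$.
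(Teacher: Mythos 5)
Your proof follows essentially the same route as the paper's: the same module/Taylor argument for showing $R(\ph,\al)\in\A(\R,\mathbb C)$, the same identity $\tfrac14|\ga+2|^2-1=f'$ for the inverse, and the same tangent-map formula $T_{(\ph,\al)}R(h,U)=\ph_x^{-1/2}h'e^{i\al/2}+i\,\ph_x^{1/2}e^{i\al/2}U$ followed by the real part of the $L^2$-pairing, under which the phase cancels and the change of variables gives $G$. Your caveat about the branch of $\arg(\ga+2)$ is well taken --- since $\al\in\A(\R)$ must vanish at $+\infty$, the image of $R$ is only the winding-number-zero component of $\A(\R,\mathbb C\setminus\{-2\})$, a point the paper's one-line bijectivity argument glosses over --- but this does not affect the pullback computation or the flatness conclusion.
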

\begin{proof}
An argument similar to that in Sect.~\ref{RmapDiffAA} shows that the image $R(\ph,\al)$ is an element of $\A(\R,\mathbb C/\{-2\})$.
The bijectivity follows from a straightforward calculation using that for $\ga=R(\ph,\al)=R(\on{Id}+f,\al)$ we have 
$$\frac14|\ga(x)+2|^2-1=f'(x) ,$$ which proves the identities $R\circ R\i =R\circ R\i=\on{Id}$.

Since the mapping $R$ is bijective, the pullback via $R$ yields a well-defined metric on $\wt\M$. To obtain its formula, we must calculate the tangent mapping of $R$.
Let $(h,U)=(X\circ\ph,U) \in T_{\ph,\al}\wt\M$.
We have
\begin{align*}
T_{\ph,\al} R (h,U) &=  \ph_x^{-1/2} h' e^{i\al/2}+i \ph_x^{1/2} e^{i\al/2}U   .
\end{align*}
Using this formula we have for $h=X_1\circ\ph,k= X_2\circ\ph$: 
\begin{align*}
R^*\langle (h,U) ,(k,V) \rangle_{L^2} &= \on{Re} \int_{\R} \langle T_{\ph,\al} R (h,U) , \overline{T_{\ph,\al} R (k,V)}\rangle dx\\&= \int_\R X_1'(x)X_2'(x) + \al(x)\be(x) dx\\& =  G_{\ph,\al}((h,U) ,(k,V)) .\quad\qedhere
\end{align*}
\end{proof}
We can now again use this result to construct an analytic solution formula for the corresponding geodesic equation --- the two-component HS equation.

\begin{thm*}[Solutions to  $2$HS -equation]
Given an initial value $(u_0,\rho_0)$ in $\AA(\R)\times\A(\R)$  the solution to the HS equation is given by
\begin{align*}
(u,\rho)=(\ph_t\circ\ph\i,-\al\circ\ph+\al_t\circ\ph\i)\quad\text{with}\quad
 (\ph,\al)=R\i(t(u_0'+i\rho_0)) .
\end{align*}
In particular, this means that a solution breaks in finite time $T$, if and only if there exists a $x\in \R$ such that $u'_0(x)<0$ and $\rh_0(x)=0$.
\end{thm*}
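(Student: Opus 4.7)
The plan is to mirror the argument used for the one-component case in Theorem \ref{SolvingHS}, exploiting the fact that the $R$-map is an isometric diffeomorphism from $\wt M$ onto the open subset $\A(\R,\C\setminus\{-2\})$ of a flat pre-Hilbert space. Since the pullback of the flat $L^2$-metric is exactly $G$, the geodesics of $(\wt M, G)$ are precisely the $R\i$-images of straight lines in the image space. In particular, the geodesic starting at $(\Id, 0)$ with initial velocity $(u_0,\rho_0)\in T_{(\Id,0)}\wt M$ is obtained by $R\i$ applied to the straight line $t\mapsto t\cdot T_{(\Id,0)}R.(u_0,\rho_0)$.

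First I would specialize the tangent-map formula from the proof of the $R$-map theorem, namely $T_{(\ph,\al)}R(h,U)=\ph_x^{-1/2}h'e^{i\al/2}+i\ph_x^{1/2}e^{i\al/2}U$, to the base point $(\ph,\al)=(\Id,0)$, obtaining
\[
T_{(\Id,0)}R.(u_0,\rho_0)=u_0'+i\rho_0.
\]
Consequently the unique geodesic with prescribed initial velocity $(u_0,\rho_0)$ is $(\ph(t),\al(t))=R\i\bigl(t(u_0'+i\rho_0)\bigr)$, which is the content of the first half of the statement. The corresponding solution $(u,\rho)$ of the two-component HS equation is then the right-trivialized velocity of this geodesic, namely $u=\ph_t\circ\ph\i$ and $\rho=\al_t\circ\ph\i$, because by construction this is the geodesic equation written in Eulerian form (cf.\ Sect.~\ref{adjoint}).

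Next I would address the maximal time of existence. Since $R\i$ is defined on $\A(\R,\C\setminus\{-2\})$, the geodesic $(\ph(t),\al(t))$ persists in $\wt M$ precisely as long as the straight line $\ga(t,x):=t(u_0'(x)+i\rho_0(x))$ avoids the value $-2\in\C$ for all $x\in\R$. The equation $\ga(t,x)=-2$ with $t>0$ forces simultaneously
\[
t\rho_0(x)=0\quad\text{and}\quad tu_0'(x)=-2,
\]
i.e.\ $\rho_0(x)=0$ and $u_0'(x)=-2/t<0$. Hence breakdown in finite positive time $T$ occurs iff there exists $x\in\R$ with $\rho_0(x)=0$ and $u_0'(x)<0$, and the breaking time is
\[
T=\inf\Bigl\{-\tfrac{2}{u_0'(x)}:x\in\R,\ \rho_0(x)=0,\ u_0'(x)<0\Bigr\}.
\]
Conversely, if no such $x$ exists, then $\ga(t,\cdot)$ stays in $\C\setminus\{-2\}$ for every $t\ge 0$, so the geodesic, and hence the solution, exists for all time.

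The only mildly delicate point is verifying that the set $\A(\R,\C\setminus\{-2\})$ is really the precise obstruction, i.e.\ that $R\i$ extends/fails exactly at this locus; this follows from the explicit inversion formula in the $R$-map theorem, whose denominator $\ga+2$ appears through $|\ga+2|^2$ and $\arg(\ga+2)$. The real-analyticity of the solutions in time, and the fact that the construction respects the three spaces $C^\infty_{c,1}$, $\mathcal S_1$, $W^{\infty,1}_1$ paired with $C^\infty_c$, $\mathcal S$, $W^{\infty,1}$, is inherited from the fact that $t\mapsto t(u_0'+i\rho_0)$ is a real-analytic curve and $R\i$ preserves real-analytic curves, as in the proof of Theorem~\ref{SolvingHS}.
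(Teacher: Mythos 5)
Your proposal is correct and follows essentially the same route as the paper: identify geodesics of $(\wt\M,G)$ with straight lines in the flat image space via the $R$-map, compute $T_{(\on{Id},0)}R.(u_0,\rho_0)=u_0'+i\rho_0$ to match the initial velocity, and read off the solution as the right-trivialized velocity. You in fact go slightly further than the paper's (very terse) proof by spelling out the blow-up criterion — that the line $t(u_0'(x)+i\rho_0(x))$ hits $-2$ exactly when $\rho_0(x)=0$ and $u_0'(x)<0$ — which the paper states but does not argue; this addition is welcome and consistent with the intended argument.
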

\begin{proof}
By the previous theorem we know that the path 
$$
(\ph(t,x),\rho(t,x))=R\i(t\; \ga_0)(x)
$$ 
is a solution to the geodesic equation for every $\ga_0 \in\AA(\R,\mathbb C/\{-2\})$. 
It remains to choose $\ga$ such that the initial conditions
are satisfied. This can be achieved exactly by choosing $\ga_0=T_{\on{Id,0}}R (u_0,\rho_0) = (u_0'+i\rho_0)$.
\end{proof}

\subsection*{Remark} This theorem holds also in more general situations, i.e., for the spaces 
$\AA(\mathbb R)\x \mathcal C(\mathbb R)$ with $\A\neq\mathcal C$, e.g., 
$W^{\infty,1}_1(\mathbb R)\x \mathcal S(\mathbb R)$. The result holds in this situation since 
the diffeomorphism group $\Diff_{\AA}(\R)$ acts on $\mathcal C(\R)$ for all choices of
$\A$ and $\mathcal C$ among $C^{\infty}_c(\R)$, $\mathcal S(\R)$ and $W^{\infty,1}(\R)$. 

\section{Remarks on Periodic Case}\label{periodic}

In this section we will briefly review the results of \cite{Lenells2007} and extend them to the case of real-analytic or ultra-differentiable functions on the circle.
In this section, $\Diff^{\circ}(S^1)/S^1$ denotes one of the following homogeneous spaces:
\begin{enumerate}
 \item $\Diff(S^1)/S^1$ the space of smooth  diffeomorphisms on the circle modulo rotations.
 \item $\Diff^{\om}(S^1)/S^1$ the space of real-analytic diffeomorphisms on the circle modulo 
        rotations; c.f. Sect~\ref{diffom}.
 \item $\Diff^{[M]}(S^1)/S^1$ the space of ultra-differentiable diffeomorphisms of  Beurling type  or 
        Roumieu type on the circle modulo rotations; c.f. Sect.~\ref{diffDC}.
\end{enumerate}
A diffeomorphism  $\ph\in \Diff^{\circ}(S^1)$ is related to its universal covering 
diffeomorphism $\wt\ph$ by $\ph(e^{ix}) = e^{i\wt\ph(x)}$. Then $\wt\ph(x)=x+f(x)$ where 
$f$ is a $2\pi$-periodic real-valued function. Rotations correspond to constant functions $f$.
Let $\widetilde{\Diff}^\circ(S^1)$ denote the regular Lie group of lift to the universal cover of 
diffeomorphisms. The corresponding homogeneous space is then 
$\widetilde{\Diff}^\circ(S^1)/\mathbb R$, factoring out all translations.

\begin{thm*}[\cite{Lenells2007}]
On the homogeneous space  $\widetilde{\Diff}^{\circ}(S^1)/\mathbb R$ the square root representation is a bijective 
mapping given as follows:  
$$
  R:\left\{\begin{aligned}
            \quad\widetilde{\Diff}^{\circ}(S^1)/\mathbb R &\to \left(\on{Im}(R),\|\cdot\|_{L^2([0,2\pi])}\right)\subset
\left(C^{\circ}_{2\pi\text{-per}}(\mathbb R,\mathbb R_{>0}),\|\cdot\|_{L^2}\right)\\
\wt\ph &\mapsto 2\;(\wt\ph')^{1/2} .
           \end{aligned}
\right.
$$
The image of the $R$-map is the sphere of radius $\sqrt{8\pi}$, i.e.,
$$\on{Im}(R) = 
\Big\{\ga \in C^{\circ}_{2\pi\text{-per}}(\mathbb R,\mathbb R_{>0}):
\|\ga\|^2_{L^2}=\int_{0}^{2\pi} \ga^2\,d\th = 8\pi\Big\}.$$
The pullback of the  restriction of the $L^2$-metric to $\on{Im}(R)$ via $R$ 
is the homogeneous Sobolev metric of order one, i.e.,
$$R^*\langle \cdot,\cdot\rangle = \langle\cdot,\cdot\rangle_{\dot H^1} .$$
Thus the spaces $\big(\Diff^{\circ}(S^1)/S^1,\dot H^1\big)$ have constant positive sectional curvature.
\end{thm*}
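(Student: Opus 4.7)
The plan is to mirror the proof of Theorem~\ref{RmapDiffAA}, adjusting for two structural differences forced by periodicity: the image of $R$ is now a codimension-one $L^2$-sphere (rather than an open subset), and the pulled-back geometry is that of a round sphere (constant positive curvature) instead of a flat space. The smooth case is due to Lenells, so the substantive new content is that all four steps go through verbatim in the $C^\om$ and $C^{[M]}$ settings.

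First I would check that $R$ is well-defined on $\widetilde\Diff^\circ(S^1)/\mathbb R$ and lands on the claimed sphere. Writing $\wt\ph(x)=x+f(x)$ with $f$ a $2\pi$-periodic $C^{\circ}$ function, the derivative $\wt\ph'=1+f'$ is strictly positive and $2\pi$-periodic. Since $t\mapsto 2\sqrt{t}$ is real-analytic on $\mathbb R_{>0}$, and each class $C^\circ$ is closed under composition with real-analytic maps, $R(\wt\ph)=2(\wt\ph')^{1/2}$ lies in $C^{\circ}_{2\pi\text{-per}}(\mathbb R,\mathbb R_{>0})$. The identity $\wt\ph(2\pi)=\wt\ph(0)+2\pi$ for every lift then yields
\[
\|R(\wt\ph)\|_{L^2([0,2\pi])}^{2} \;=\; 4\int_{0}^{2\pi}\wt\ph'(x)\,dx \;=\; 4\bigl(\wt\ph(2\pi)-\wt\ph(0)\bigr) \;=\; 8\pi.
\]
Because $R$ depends only on $\wt\ph'$, it descends to the quotient by $\mathbb R$. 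For surjectivity onto the sphere, given $\ga>0$ with $\|\ga\|_{L^2}^{2}=8\pi$, I would set $\wt\ph(x):=\tfrac14\int_{0}^{x}\ga(y)^{2}\,dy$, observe that $\wt\ph'=\ga^{2}/4>0$ and that $\wt\ph(x+2\pi)-\wt\ph(x)=\tfrac14\int_{0}^{2\pi}\ga^{2}\,dy=2\pi$ so $\wt\ph$ is a legitimate lift; a direct check gives $R\circ R^{-1}=\on{Id}$ and $R^{-1}\circ R=\on{Id}$ modulo $\mathbb R$.

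Next I would compute the tangent map exactly as in the proof of Theorem~\ref{RmapDiffAA}. For $h=X\circ\wt\ph$ a tangent vector at $\wt\ph$, the chain rule gives $T_{\wt\ph}R\cdot h=(\wt\ph')^{1/2}(X'\circ\wt\ph)$, and the change of variables $y=\wt\ph(x)$ yields
\[
\langle T_{\wt\ph}R\cdot h,\,T_{\wt\ph}R\cdot k\rangle_{L^{2}([0,2\pi])} \;=\; \int_{0}^{2\pi}\wt\ph'(x)\,(X'\circ\wt\ph)(x)(Y'\circ\wt\ph)(x)\,dx \;=\; \int_{0}^{2\pi}X'(y)Y'(y)\,dy,
\]
which is the $\dot H^{1}$-inner product. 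Finally, the image of $R$ is a round $L^{2}$-sphere of radius $\sqrt{8\pi}$ carrying the induced weak Riemannian metric; a Gauss-equation computation with outward unit normal $\ga/\sqrt{8\pi}$ shows that every tangent two-plane has sectional curvature $1/(8\pi)$. This algebraic calculation is formally identical to the finite-dimensional one and so carries over to the weak Riemannian setting.

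The main obstacle is the regularity of $R$ and $R^{-1}$ in the real-analytic and ultra-differentiable categories. For $C^\om$ this reduces to stability of real-analytic curves under composition with the real-analytic operations of squaring, square-rooting on $\mathbb R_{>0}$, and integration against a fixed measure, which is standard. For $C^{[M]}$ the same operations are available because, as recalled in Section~\ref{diffDC}, Denjoy--Carleman classes with the stated log-convexity and moderate-growth conditions are closed under composition and differentiation, admit $C^{[M]}$-flows, and form a convenient setting in which $f\mapsto 2\sqrt{f}$ is a $C^{[M]}$-map from the open cone of positive functions. Once these closure properties are in hand the four steps above close up, and the conclusion follows for all three variants of $\Diff^{\circ}(S^1)/S^1$ simultaneously.
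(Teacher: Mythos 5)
Your proposal is correct and follows essentially the same route as the paper, which simply cites Lenells for the smooth case, verifies $\|R(\wt\ph)\|_{L^2}^2=4\int_0^{2\pi}\wt\ph'\,dx=8\pi$ for the image, and notes that the real-analytic and ultra-differentiable cases follow because $\Diff^\om(S^1)$ and $\Diff^{[M]}(S^1)$ are Lie subgroups whose classes are closed under the relevant operations. The extra details you supply (explicit inverse, tangent map, pullback computation, Gauss equation on the $L^2$-sphere) are exactly the computations of Lenells and of the paper's Theorem~\ref{RmapDiffAA}, so nothing essentially new is being done.
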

Here $C^{\circ}_{2\pi-\text{per}}(\mathbb R,\mathbb R_{>0})$ 
denotes the corresponding space of sufficiently smooth functions, i.e., 
either the space of $C^{\infty}$, real-analytic or ultra-differentiable functions.
\begin{proof}
The case of $C^{\infty}$-functions is proven in the work of Lenells \cite{Lenells2007}.
The statement about the image of $R$ follows from
$$\|R(\wt\ph)\|^2 = 4\|\;(\wt\ph')^{1/2} \|^2 = 4\int_0^{2\pi} \wt\ph'(x) dx= 4\int_0^{2\pi}1+f'(x) dx =8\pi$$
The remaining cases follow similarly using that 
$\Diff^{\om}(S^1)$ and $\Diff^{[M]}(S^1)$ are Lie subgroups of $\Diff(S^1)$, c.f.~\ref{diffom}
and~\ref{diffDC}.
\end{proof}
As a direct consequence we obtain the following result:
\begin{thm*}[Solutions to the periodic HS equation]
Given an initial value $u_0$ in $C^{\circ}(S^1,\mathbb R_{>0})$  the solution to the HS equation stays locally in the same space.
A solution exists  for all time $t$, if and only if $u'_0(\th)\geq0$ for  all $\th\in S^1$.
\end{thm*}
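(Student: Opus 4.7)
The plan is to mirror Theorem~\ref{SolvingHS} in the periodic setting, combining the $R$-map isometry of the previous theorem with the fact that geodesics on the round sphere are great circles. By the same computation as in the proof of Theorem~\ref{RmapDiffAA}, adapted to $S^1$, one has $T_{\on{Id}}R\cdot u_0 = u_0'$. Since $u_0'$ has zero mean on $S^1$, it is $L^2$-orthogonal to the basepoint $R(\on{Id})=2$, so the great circle through $R(\on{Id})$ with initial tangent $u_0'$ takes the closed form
\[
\wt\ph(t)=R\i(\ga(t)),\qquad \ga(t,\theta)=2\cos\alpha(t)+\sqrt{8\pi}\,\sin\alpha(t)\,\frac{u_0'(\theta)}{\|u_0'\|_{L^2}},
\]
with $\alpha(t)=t\|u_0'\|_{L^2}/\sqrt{8\pi}$. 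The corresponding HS solution $u(t,\cdot)=\wt\ph_t(t,\cdot)\circ\wt\ph(t)\i$ then exists precisely as long as $\ga(t,\cdot)$ remains inside $\on{Im}(R)$, that is, strictly positive on $S^1$.

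For the local statement, I would use that $\ga(0,\theta)\equiv 2>0$ on the compact circle, so joint continuity of $\ga$ in $(t,\theta)$ gives an interval $[0,\epsilon)$ on which $\ga(t,\cdot)>0$ everywhere. Since $t\mapsto\ga(t)$ is real-analytic and $R\i$ preserves the classes $C^\infty$, $C^\om$, and $C^{[M]}$ (cf.\ the previous theorem and~\cite[Section~9]{Michor1997}), the solution $u(t,\cdot)$ remains in the same regularity class as $u_0$ on that interval.

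For the global statement, the decisive observation is that $\int_{S^1}u_0'\,d\theta=0$, so the pointwise inequality $u_0'\geq 0$ forces $u_0'\equiv 0$; in that case $\ga(t)\equiv 2$ and the solution is stationary and defined for all $t\in\mathbb R$. Conversely, if $u_0'\not\equiv 0$ then $m:=\min_\theta u_0'(\theta)<0$; for small $t>0$ one has $\sin\alpha(t)>0$, so the minimum of $\ga(t,\cdot)$ is attained where $u_0'$ is minimal and equals $2\cos\alpha(t)+\sqrt{8\pi}\,m\sin\alpha(t)/\|u_0'\|_{L^2}$. This expression is strictly decreasing for $\alpha\in(0,\pi/2)$ and vanishes at the finite time $t^\ast$ determined by $\tan\alpha(t^\ast)=2\|u_0'\|_{L^2}/(\sqrt{8\pi}\,|m|)$, at which the great circle exits $\on{Im}(R)$ and $\wt\ph(t^\ast)$ develops a critical point.

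The only mildly delicate point, as in the real-line proof, is keeping track of the sign of $\sin\alpha(t)$ to identify the pointwise minimum of $\ga(t,\cdot)$ as an affine evaluation of $u_0'$; everything else transcribes verbatim from Theorem~\ref{SolvingHS}. The structural difference is that the straight lines $t\mapsto tu_0'$ used in the flat case are replaced by great circles on the $L^2$-sphere, which together with the zero-mean constraint is exactly what collapses the real-line condition ``$u_0'\geq 0$'' to its degenerate periodic counterpart.
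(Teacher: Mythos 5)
Your proposal is correct and follows exactly the route the paper intends: the paper states this theorem as a ``direct consequence'' of the preceding $R$-map theorem without writing out a proof, and your explicit great-circle formula on the sphere of radius $\sqrt{8\pi}$, together with the first-exit-time computation for $\ga(t,\cdot)>0$, is precisely the missing detail. Your observation that the zero-mean constraint $\int_{S^1}u_0'\,d\th=0$ collapses the global-existence condition $u_0'\ge 0$ to $u_0'\equiv 0$ (so that only the trivial geodesic is complete) is also correct and consistent with Lenells' result that all nontrivial periodic HS solutions blow up in finite time.
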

\subsection*{Remark}
From our setup in Sects. \ref{diffom} and \ref{diffDC} it is obvious that 
the results of \cite{Lenells2011_preprint} for the  two-component HS equation extend 
to the cases of real-analytic and ultra-differentiable functions.

\section{Similar Representation for Camassa-Holm Equation}

In this article we have shown that certain non-trivial Riemannian spaces that have flat or constant curvature can be represented 
as a simple submanifold of the flat manifold of all sufficiently smooth functions equipped with the $L^2$-metric. In this section  we will present a natural example of a metric space
with non-trivial curvature which can also be represented as a (complicated) subspace  of the flat manifold of all  smooth functions, namely the Lie group $\Diff(S^1)$ equipped with 
the right-invariant non-homogeneous $H^1$-metric.

\begin{thm*}[\cite{Kouranbaeva1999}]
The right-invariant $H^1$-metric on the Lie group $\Diff(S^1)$ is given by
$$G_{\ph}(X\circ\ph,Y\circ\ph) = \int X(x)Y(x) + X'(x)Y'(x)\;dx,$$
where
$X,Y$ are vector fields in the  Lie algebra $\X(S^1)$. 
The induced geodesic distance is positive, and
the corresponding geodesic equation is the Camassa-Holm Eq. \cite{Holm1993} given by
$$\boxed{\begin{aligned}
u_t- u_{xxt} + 3 u u_x = 2 u_x u_{xx} + u u_{xxx}.
\end{aligned}}$$
The geodesic equation is well posed and the exponential map is a local diffeomorphism.
\end{thm*}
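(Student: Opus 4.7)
The plan is to address the three assertions --- derivation of the Camassa--Holm equation, positivity of the geodesic distance, and local well-posedness --- in sequence, all within the framework of Section~\ref{adjoint}.

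First I would identify the inertia operator. Using the $L^2$-pairing to embed $\X(S^1)\cong C^\infty(S^1)$ into its smooth dual, integration by parts gives $\check G_{\on{Id}}(X) = X - X''$, the Helmholtz operator, which is invertible on $C^\infty(S^1)$. Hence condition \eqref{eq:adim} is automatically satisfied and the abstract geodesic equation from Section~\ref{adjoint} becomes $(1-\p_x^2)u_t = -\on{ad}(u)^*\check G_{\on{Id}}(u)$. To evaluate the right hand side I follow the pattern used for the $\dot H^1$-metric in Section~\ref{H1metric}: two integrations by parts yield
\[
\on{ad}(X)^*\check G_{\on{Id}}(Y) = 2(Y-Y'')X' + X(Y-Y'')'
\]
for $X,Y\in\X(S^1)$, and setting $X=Y=u$ produces $3uu_x - 2u_xu_{xx} - uu_{xxx}$. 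Substituting and rearranging yields $u_t - u_{xxt} + 3uu_x = 2u_xu_{xx} + uu_{xxx}$, which is the Camassa--Holm equation.

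Second, for positivity of the geodesic distance I would use a dominating-norm argument. The Sobolev embedding $H^1(S^1)\hookrightarrow C^0(S^1)$ shows that the right-invariant $H^1$-length of any smooth path $\ph(t)$ from $\on{Id}$ to $\ph_1\neq\on{Id}$ bounds the $L^\infty$-displacement between these endpoints, so the infimum of lengths must be strictly positive. This is precisely the situation to which the arguments of~\cite{Michor2006a} apply directly; the Sobolev embedding in fact makes the $H^1$-case easier than the $L^2$-case treated there.

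Third, for local well-posedness and the local diffeomorphism property of the exponential map I would carry out the Ebin--Marsden strategy, as in Kouranbaeva~\cite{Kouranbaeva1999}. One extends $\Diff(S^1)$ to the Hilbert manifold $\Diff^s(S^1)$ for $s>3/2$, writes the Camassa--Holm equation as a second order ODE on $T\Diff^s(S^1)$ via right trivialization, and verifies smoothness of the geodesic spray $F^s:T\Diff^s(S^1)\to TT\Diff^s(S^1)$. Standard ODE theory on Banach manifolds then provides local existence, uniqueness, smooth dependence on initial data, and invertibility of $T_0\exp$, hence the local diffeomorphism property; a no-loss-of-derivatives argument finally transfers solutions back to the Fr\'echet category $\Diff(S^1)$. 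The hard part is the smoothness of the spray: the naive expression $\check G\i\on{ad}(u)^*\check G(u)$ involves $\check G\i(uu_{xxx})$, which a priori loses one derivative in $u$, and verifying that this apparent loss is cancelled after conjugation with right translation --- so that $F^s$ actually maps into $TT\Diff^s(S^1)$ rather than a weaker space --- is the analytic core of the argument. It rests on the particular algebraic structure of the Camassa--Holm nonlinearity, in direct analogy with the Ebin--Marsden treatment of the incompressible Euler equation.
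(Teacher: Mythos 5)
The paper does not actually prove this theorem: it is stated as a quotation from Kouranbaeva \cite{Kouranbaeva1999} (with the well-posedness also resting on that reference), and no proof environment follows it. So there is no in-paper argument to compare yours against; what can be checked is whether your proposal is consistent with the paper's general machinery, and it is. Your identification of the inertia operator $\check G_{\on{Id}}=1-\p_x^2$, the adjoint formula $\on{ad}(X)^*\check G_{\on{Id}}(Y)=2(Y-Y'')X'+X(Y-Y'')'$, and the specialization $X=Y=u$ giving $3uu_x-2u_xu_{xx}-uu_{xxx}$ are all correct and follow exactly the pattern the paper uses for the $\dot H^1$-metric in Sect.~\ref{H1metric}; since $1-\p_x^2$ is invertible on $C^\infty(S^1)$, condition \eqref{eq:adim} holds trivially and, unlike the homogeneous case, even $\on{ad}(X)^\top$ exists. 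Your positivity argument via the embedding $H^1(S^1)\hookrightarrow C^0(S^1)$, bounding the sup-displacement of the endpoints by the path length, is sound and standard. The only caveat is that the third assertion --- well-posedness and the local diffeomorphism property of $\exp$ --- is in your write-up an outline rather than a proof: you correctly name the crux (smoothness of the right-trivialized spray on $\Diff^s(S^1)$, $s>3/2$, i.e.\ the absence of derivative loss after conjugation with right translation, plus the no-loss-of-derivatives argument to return to the Fr\'echet setting), but you do not carry it out. Given that the paper itself delegates precisely this part to \cite{Kouranbaeva1999}, this is an acceptable level of detail, but you should be explicit that this step is being cited rather than proved.
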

Using the ideas of \cite{Lenells2011_preprint} we can introduce an $R$-map for this space.
We use again $\ph(e^{ix}) = e^{i\wt\ph(x)}$ with $\wt\ph(x)=x+f(x)$ for periodic $f$.
Again let $\widetilde{\Diff}(S^1)$ denote the regular Lie group of lifts to the universal cover of 
diffeomorphisms. 
For curves we obtain $\p_t|_0 \ph(t,e^{ix}) = i\p_t|_0\wt\ph(t,x).e^{i\wt\ph(x)} = i 
\p_t|_0 f(t,x).e^{i\wt\ph(x)}$. Thus, for tangent vectors we obtain 
$\de\ph = i.\de\wt\ph.\ph = i.\de f.\ph$. 

\begin{thm*}
The $R$-map is defined by 
\begin{align*}
&R(\wt\ph) :=2\wt\ph'^{\frac12}e^{i (\wt\ph-\on{Id})/2} = 2(1+f')^{\frac12}e^{i f/2},
\\ 
&R:\widetilde{\Diff}(S^1)\to C^{\infty}_{2\pi\text{-per}}(\mathbb R,\mathbb C) .
\end{align*}
The image under the $R$-map of the diffeomorphism group is the space $\mathcal S$ given by 
\begin{align*}
\mathcal S:= R(\widetilde{\Diff}(S^1)) &= 
\Big\{\ga \in C^{\infty}_{2\pi\text{-per}}(\mathbb R,\mathbb C \setminus\{0\}): F(\ga) = (F_1(\ga),F_2(\ga))= 0\Big\},
\\
\text{where  }F_1(\ga):&=\int_0^{2\pi} (|\ga|^2-1)\,d\th,\\
\text{and } F_2(\ga):&=8\on{arg}(\ga)'-|\ga|^2.
\end{align*}
The $R$ map is injective,  and 
for any curve in   $\mathcal S$ the inverse of $R$ is given by 
$$
R\i(\ga) = 2\on{arg}(\ga)+ \on{Id}_{\mathbb R} \qquad R\i:\mathcal S\mapsto \Diff(S^1).$$
Furthermore, the pullback of the $L^2$-inner product on $C^{\infty}(S^1,\mathbb C)$ 
to the  diffeomorphism group by the $R$-map is the right-invariant  Sobolev metric of order one.
\end{thm*}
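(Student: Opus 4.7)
The plan is to adapt the strategy from Section \ref{RmapDiffAA}, with one essential new ingredient: a complex phase factor $e^{i(\wt\ph-\on{Id})/2}$ is appended to the square-root amplitude. Expanding $|a+ib|^2=a^2+b^2$ will then produce the $\dot H^1$-part of the metric from the amplitude (exactly as in Section \ref{RmapDiffAA}) and the $L^2$-part from the phase, so that the two assemble into the full non-homogeneous $H^1$-inner product. Because $\widetilde{\Diff}(S^1)$ is not an open subset of the target, the image will satisfy two scalar constraints $F_1,F_2$, reflecting (a) periodicity of $f=\wt\ph-\on{Id}$ and (b) the rigid compatibility between $|\ga|^2$ and $\arg(\ga)'$ dictated by the definition of $R$.

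For the image characterization and bijectivity, I would first compute directly that $|R(\wt\ph)|^2=4\wt\ph'$ and $\arg R(\wt\ph)\equiv(\wt\ph-\on{Id})/2\pmod{2\pi}$. Integrating the first formula over one period uses $\int_0^{2\pi}(\wt\ph'-1)\,dx=0$ to give $F_1(R(\wt\ph))=0$, and combining both formulas pointwise gives $F_2(R(\wt\ph))=0$. Injectivity of $R$ follows from these two formulas: the modulus recovers $\wt\ph'$ and the argument recovers $\wt\ph-\on{Id}$ (modulo $4\pi$, which is pinned down by the chosen lift). For surjectivity onto $\mathcal S$, given $\ga\in\mathcal S$ I would set $\wt\ph:=2\arg(\ga)+\on{Id}_{\mathbb R}$; the condition $F_2(\ga)=0$ then forces $\wt\ph'=\tfrac14|\ga|^2>0$, so $\wt\ph$ is an orientation-preserving lift, and $F_1(\ga)=0$ ensures $\wt\ph(x+2\pi)=\wt\ph(x)+2\pi$. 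A short computation then confirms $R\circ R^{-1}=\on{Id}_{\mathcal S}$ and $R^{-1}\circ R=\on{Id}_{\widetilde{\Diff}(S^1)}$.

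For the metric identification, I would differentiate $R(\wt\ph+\varepsilon h)$ at $\varepsilon=0$ to obtain
\[
T_{\wt\ph}R.h \;=\; e^{i(\wt\ph-\on{Id})/2}\Bigl(\wt\ph'^{-1/2}h' + i\,\wt\ph'^{1/2}h\Bigr).
\]
Writing $h=X\circ\wt\ph$ and using $h'=(X'\circ\wt\ph)\wt\ph'$, this simplifies to $e^{i(\wt\ph-\on{Id})/2}\wt\ph'^{1/2}(X'\circ\wt\ph+i\,X\circ\wt\ph)$. The pointwise identity $|a+ib|^2=a^2+b^2$ then gives
\[
\int_0^{2\pi}|T_{\wt\ph}R.h|^2\,dx \;=\; \int_0^{2\pi}\wt\ph'\bigl((X'\circ\wt\ph)^2+(X\circ\wt\ph)^2\bigr)\,dx,
\]
and the substitution $y=\wt\ph(x)$, together with $2\pi$-periodicity of $X$ and the identity $\wt\ph(x+2\pi)=\wt\ph(x)+2\pi$, reduces this to $\int_0^{2\pi}(X'(y)^2+X(y)^2)\,dy=\|X\|_{H^1}^2$, which is exactly $G_{\wt\ph}(h,h)$.

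The main obstacle is the careful treatment of $\arg(\ga)$ as a single-valued real function on $\mathbb R$ rather than merely a circle-valued one, which is required both for the inverse formula $R^{-1}(\ga)=2\arg(\ga)+\on{Id}_{\mathbb R}$ to make sense and for $\wt\ph$ to be a genuine lift of an $S^1$-diffeomorphism. This uses $\ga\in C^\infty_{2\pi\text{-per}}(\mathbb R,\mathbb C\setminus\{0\})$ to fix a continuous real branch of $\arg$, and the constraints $F_1,F_2$ to control its periodic behavior; once the lift is chosen, all remaining identities reduce to routine pointwise calculus.
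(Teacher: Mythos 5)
Your overall strategy coincides with the paper's: compute $|R(\wt\ph)|^2=4\wt\ph'$ and $\on{arg}R(\wt\ph)=(\wt\ph-\on{Id})/2$, characterize the image by the two scalar constraints, then compute $T_{\wt\ph}R$, use $|a+ib|^2=a^2+b^2$ and the substitution $y=\wt\ph(x)$ to recover the $H^1$-inner product. The tangent-map formula, the metric computation, and the injectivity/inversion argument are correct and agree with what the paper does.

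However, the step in which you verify that the image lies in $\mathcal S$ fails as written. From $|R(\wt\ph)|^2=4\wt\ph'$ and $\on{arg}(R(\wt\ph))'=(\wt\ph'-1)/2$ one gets
\[
F_1(R(\wt\ph))=\int_0^{2\pi}\bigl(4\wt\ph'-1\bigr)\,dx=6\pi\ne 0,
\qquad
F_2(R(\wt\ph))=4(\wt\ph'-1)-4\wt\ph'=-4\ne 0,
\]
so the constraints as literally stated do not vanish on the image; the correct ones are $\int_0^{2\pi}(|\ga|^2-4)\,d\th=0$ and $8\on{arg}(\ga)'+4-|\ga|^2=0$. (This traces back to typos in the theorem statement, which the paper's own one-line ``observation'' also does not detect.) Your proposal is internally inconsistent on exactly this point: you assert $F_2(R(\wt\ph))=0$ for the stated $F_2$, but two sentences later you deduce $\wt\ph'=\tfrac14|\ga|^2$ from $F_2(\ga)=0$, which is the consequence of the \emph{corrected} constraint (the stated one gives $\wt\ph'=1+\tfrac14|\ga|^2$); likewise your periodicity count $\wt\ph(x+2\pi)=\wt\ph(x)+2\pi$ requires $\int_0^{2\pi}|\ga|^2\,d\th=8\pi$, not $2\pi$, so the stated $F_1$ cannot do the job you assign to it. Since the entire content of the image characterization is these two identities, they must actually be computed, and the constants in $F_1,F_2$ corrected, rather than asserted.
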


\begin{proof}
To prove the characterization for the image of $R$ we observe that for $\ga\in C^{\infty}(S^1,\mathbb C \setminus\{0\})$ 
the function $R\i(\ga)\in C^{\infty}([0.2\pi),[0,2\pi))$ is periodic if and only if $F_1(\ga)=0$. Furthermore, 
we have that $R(R\i(\ga))=\ga$ if and only if $F_2(\ga)=0$.
 
To calculate the formula for the pullback metric, we need to calculate the tangent of the $R$-map 
where $h$ is tangent to $\wt\ph$, i.e., to $f$.
\begin{align*}
T_{\wt\ph}Rh &= \wt\ph'^{-\frac12}h'e^{if/2}+i \wt\ph'^{\frac12}e^{if/2}h.
\end{align*}
Thus the pullback of the $L^2$ inner product on $C^{\infty}(S^1,\mathbb C)$ is given by
\begin{align*}
(R^*\langle ~,~ \rangle_{L^2})_{\wt\ph}(h,h) &= \int_{0}^{2\pi}T_{\wt\ph}R(h)\cdot 
\overline{T_{\wt\ph}R(h)}\, dx\\
&= \int_{0}^{2\pi} \frac{h'^2}{ \wt\ph'} + h\wt\ph'\, dx=\int_{S^1} X^2+X'^2\,dx,
\end{align*}
with $h=X\circ\wt\ph$.
\end{proof}

\bibliographystyle{plain}

\begin{thebibliography}{10}
\bibitem{Arnold1966}
V.~Arnold.
\newblock {Sur la g{\'e}om{\'e}trie diff{\'e}rentielle des groupes de {L}ie de
  dimension infinie et ses applications {\`a} l'hydrodynamique des fluides
  parfaits}.
\newblock {\em Ann. Inst. Fourier (Grenoble)}, 16(fasc. 1):319--361, 1966.

\bibitem{Bauer2012c}
M.~Bauer, M.~Bruveris, P.~Harms, and P.~W. Michor.
\newblock {Vanishing geodesic distance for the {R}iemannian metric with
  geodesic equation the {K}d{V}-equation}.
\newblock {\em Ann. Global Anal. Geom.}, 41(4):461--472, 2012.

\bibitem{Bauer2012b_preprint}
M.~Bauer, M.~Bruveris, S.~Marsland, and P.~W. Michor.
\newblock {Constructing reparametrization invariant metrics on spaces of plane
  curves}.
\newblock {\em  Differential Geom. Appl.} doi:10.1016/j.difgeo.2014.04.008. 


\bibitem{Bauer2011c_preprint}
M.~Bauer, M.~Bruveris, P.~Harms, and P.~W. Michor.
\newblock Geodesic distance for right invariant sobolev metrics of fractional order on the diffeomorphism group.
\newblock {\em Ann. Global Anal. Geom.}, 44(1):5-21, 2013. 

\bibitem{Bauer2011b}
M.~Bauer, P.~Harms, and P.~W. Michor.
\newblock {Sobolev metrics on shape space of surfaces}.
\newblock {\em J. Geom. Mech.}, 3(4):389--438, 2011.

\bibitem{Holm1993}
R.~Camassa and D.~D. Holm.
\newblock {An integrable shallow water equation with peaked solitons}.
\newblock {\em Phys. Rev. Lett.}, 71(11):1661--1664, 1993.

\bibitem{Constantin2007}
A.~Constantin, T.~Kappeler, B.~Kolev, and P.~Topalov.
\newblock {On geodesic exponential maps of the {V}irasoro group}.
\newblock {\em Ann. Global Anal. Geom.}, 31(2):155--180, 2007.

\bibitem{Constantin2003}
A.~Constantin and B.~Kolev.
\newblock {Geodesic flow on the diffeomorphism group of the circle}.
\newblock {\em Comment. Math. Helv.}, 78(4):787--804, 2003.

\bibitem{Ebin1970}
D.~G. Ebin and J.~Marsden.
\newblock {Groups of diffeomorphisms and the motion of an incompressible
  fluid.}
\newblock {\em Ann. of Math. (2)}, 92:102--163, 1970.

\bibitem{Wunsch2010}
J.~Escher, B.~Kolev, and M.~Wunsch.
\newblock The geometry of a vorticity model equation.
\newblock {\em Communications on Pure and Applied Analysis}  11(4):1407 - 1419, 2012.



\bibitem{Fuchs1970}
L.~Fuchs.
\newblock {\em {Infinite abelian groups. {V}ol. {I}}}.
\newblock {Pure and Applied Mathematics, Vol. 36}. Academic Press, New York,
  1970.

\bibitem{Fuchs1973}
L.~Fuchs.
\newblock {\em {Infinite abelian groups. {V}ol. {II}}}.
\newblock Academic Press, New York, 1973.
\newblock Pure and Applied Mathematics. Vol. 36-II.

\bibitem{GayBalmaz2009}
F.~Gay-Balmaz.
\newblock {Well-posedness of higher dimensional {C}amassa-{H}olm equations}.
\newblock {\em Bull. Transilv. Univ. Bra\c sov Ser. III}, 2(51):55--58, 2009.

\bibitem{Holm2005}
D.~D. Holm and J.~E. Marsden.
\newblock {Momentum maps and measure-valued solutions (peakons, filaments, and
  sheets) for the {EPD}iff equation}.
\newblock In {\em {The breadth of symplectic and {P}oisson geometry}}, volume
  232 of {\em {Progr. Math.}}, pages 203--235. Birkh{\"a}user Boston, Boston,
  MA, 2005.
  
\bibitem{KhesinMichor2004}
B. Khesin, P. Michor. 
\newblock The flow completion of Burgers' equation. 
\newblock In: Infinite dimensional groups and manifolds. 
\newblock Editor: Tilmann Wurzbacher. 
\newblock IRMA Lectures in Mathematics and Theoretical Physics 5. 
\newblock De Gruyter, Berlin, 2004. pp. 17-26.  
  

\bibitem{Misiolek2003}
B.~Khesin and G.~Misio{\l}ek.
\newblock {Euler equations on homogeneous spaces and {V}irasoro orbits}.
\newblock {\em Adv. Math.}, 176(1):116--144, 2003.

\bibitem{Kouranbaeva1999}
S.~Kouranbaeva.
\newblock {The {C}amassa-{H}olm equation as a geodesic flow on the
  diffeomorphism group}.
\newblock {\em J. Math. Phys.}, 40(2):857--868, 1999.

\bibitem{Kriegl1990}
A.~Kriegl and P.~W. Michor.
\newblock {The convenient setting for real analytic mappings}.
\newblock {\em Acta Math.}, 165(1-2):105--159, 1990.

\bibitem{Kriegl1997}
A.~Kriegl and P.~W. Michor.
\newblock {Regular infinite-dimensional {L}ie groups}.
\newblock {\em J. Lie Theory}, 7(1):61--99, 1997.

 \bibitem{GKOS01}
M.~Grosser, M.~Kunzinger, M.~Oberguggenberger, and R.~Steinbauer.
\newblock{\em Geometric theory of generalized functions with applications to
             general relativity}, volume~573 of 
{\em Mathematics and its Applications}.
\newblock Kluwer Academic Publishers, Dordrecht, 2001.



\bibitem{Michor1997}
A.~Kriegl and P.~W. Michor.
\newblock {\em {The convenient setting of global analysis}}, volume~53 of {\em
  {Mathematical Surveys and Monographs}}.
\newblock American Mathematical Society, Providence, RI, 1997.

\bibitem{Kriegl2009}
A.~Kriegl, P.~W. Michor, and A.~Rainer.
\newblock {The convenient setting for non-quasianalytic {D}enjoy-{C}arleman
  differentiable mappings}.
\newblock {\em J. Funct. Anal.}, 256(11):3510--3544, 2009.

\bibitem{Kriegl2011a}
A.~Kriegl, P.~W. Michor, and A.~Rainer.
\newblock {The convenient setting for quasianalytic {D}enjoy-{C}arleman
  differentiable mappings}.
\newblock {\em J. Funct. Anal.}, 261(7):1799--1834, 2011.

\bibitem{Michor2012c_preprint}
A.~Kriegl, P.~W. Michor, and A.~Rainer.
\newblock {The Convenient Setting for Denjoy-Carleman Differentiable Mappings
  of Beurling and Roumieu Type}.
\newblock 2012.

\bibitem{KMR14}
A. Kriegl, P. Michor, and A. Rainer.
\newblock An exotic zoo of diffeomorphism groups on $\mathbb R^n$.
\newblock 2014.

\bibitem{Kurtek2010}
S.~Kurtek, E.~Klassen, Z.~Ding, and A.~Srivastava.
\newblock {A novel riemannian framework for shape analysis of 3D objects}.
\newblock {\em Computer Vision and Pattern Recognition, IEEE Computer Society
  Conference on}, 0:1625--1632, 2010.

\bibitem{Lenells2007}
J.~Lenells.
\newblock {The Hunter--Saxton equation describes the geodesic flow on a
  sphere}.
\newblock {\em Journal of Geometry and Physics}, 57(10):2049--2064, 2007.

\bibitem{Lenells2008}
J.~Lenells.
\newblock {The {H}unter-{S}axton equation: a geometric approach}.
\newblock {\em SIAM J. Math. Anal.}, 40(1):266--277, 2008.

\bibitem{Lenells2011_preprint}
J.~Lenells.
\newblock {Spheres, {K}{\"a}hler geometry, and the two-component
  {H}unter-{S}axton equation}.
\newblock 2011.

\bibitem{Mather1974}
J.~N. Mather.
\newblock {Commutators of diffeomorphisms}.
\newblock {\em Comment. Math. Helv.}, 49:512--528, 1974.

\bibitem{Mather1975}
J.~N. Mather.
\newblock {Commutators of diffeomorphisms. {II}}.
\newblock {\em Comment. Math. Helv.}, 50:33--40, 1975.

\bibitem{Mather1985}
J.~N. Mather.
\newblock {Commutators of diffeomorphisms. {III}. {A} group which is not
  perfect}.
\newblock {\em Comment. Math. Helv.}, 60(1):122--124, 1985.


\bibitem{Micheli2012_preprint}
M. Micheli, P. Michor, D. Mumford. 
\newblock Sobolev Metrics on Diffeomorphism Groups and the Derived Geometry of Spaces of Submanifolds. 
\newblock {\em Izvestiya: Mathematics}, 77(3):541-570, 2013.



\bibitem{Michor1984a}
P.~Michor.
\newblock {A convenient setting for differential geometry and global analysis}.
\newblock {\em Cahiers Topologie G{\'e}om. Diff{\'e}rentielle}, 25(1):63--109,
  1984.

\bibitem{Michor1984b}
P.~Michor.
\newblock {A convenient setting for differential geometry and global analysis.
  {II}}.
\newblock {\em Cahiers Topologie G{\'e}om. Diff{\'e}rentielle}, 25(2):113--178,
  1984.

\bibitem{Michor2006a}
P.~W. Michor.
\newblock {Some geometric evolution equations arising as geodesic equations on
  groups of diffeomorphisms including the {H}amiltonian approach}.
\newblock In {\em {Phase space analysis of partial differential equations}},
  volume~69 of {\em {Progr. Nonlinear Differential Equations Appl.}}, pages
  133--215. Birkh{\"a}user Boston, 2006.

\bibitem{Michor2012a_preprint}
M. Micheli, P. Michor and D. Mumford.
\newblock Sobolev metrics on diffeomorphism groups and the derived geometry of spaces of submanifolds.
\newblock {\em . SIAM J. Imaging Sci.}, 5(1):394-433, 2012.  
  
  
\bibitem{Michor2012b_preprint}
P.~W. Michor and D.~Mumford.
\newblock {A zoo of diffeomorphism groups on $\mathbb {R}^n$}.
\newblock {\em \emph{To appear in:} Ann. Glob. Anal. Geom.}, 2013.

\bibitem{Ovsienko1987}
V.~Y. Ovsienko and B.~A. Khesin.
\newblock {{K}orteweg--de~{V}ries superequations as an {E}uler equation.}
\newblock {\em Funct. Anal. Appl.}, 21:329--331, 1987.

\bibitem{Wunsch2010a}
M.~Wunsch.
\newblock {On the geodesic flow on the group of diffeomorphisms of the circle
  with a fractional {S}obolev right-invariant metric}.
\newblock {\em J. Nonlinear Math. Phys.}, 17(1):7--11, 2010.

\bibitem{Wunsch2010b}
M.~Wunsch.
\newblock {The generalized {H}unter-{S}axton system}.
\newblock {\em SIAM J. Math. Anal.}, 42(3):1286--1304, 2010.

\bibitem{Michor2008a}
L.~Younes, P.~W. Michor, J.~Shah, and D.~Mumford.
\newblock {A metric on shape space with explicit geodesics}.
\newblock {\em Atti Accad. Naz. Lincei Cl. Sci. Fis. Mat. Natur. Rend. Lincei
  (9) Mat. Appl.}, 19(1):25--57, 2008.

\end{thebibliography}

\end{document}